\numberwithin{equation}{section}
\newcommand{\V}[1]{\vspace{#1}}
\newcommand{\PP}{\mathbb{P}}
\newcommand{\C}{\mathbb{C}}
\newcommand{\Z}{\mathbb{Z}}
\newcommand{\Pa}{\partial}
\newcommand{\Diff}{\operatorname{Diff}}
\newcommand{\Div}{\operatorname{Div}}
\newcommand{\Crit}{\operatorname{Crit}}
\newcommand{\MCG}{\operatorname{MCG}}
\theoremstyle{plain}
\newtheorem{theorem}{Theorem}[section]
\newtheorem{lemma}[theorem]{Lemma}
\newtheorem{proposition}[theorem]{Proposition}
\theoremstyle{definition}
\newtheorem{remark}[theorem]{Remark}
\newtheorem{question}[theorem]{Question}
\title{Classification of genus-$1$ holomorphic Lefschetz pencils}
\author{Noriyuki Hamada}
\email{hamada@math.umass.edu}
\address{Department of Mathematics and Statistics,
	University of Massachusetts Amherst, 
	Lederle Graduate Research Tower,
	710 North Pleasant Street,
	Amherst, MA 01003-9305, USA
}
\author{Kenta Hayano}
\email{k-hayano@math.keio.ac.jp}
\address{Department of Mathematics, Faculty of Science and Technology, Keio University,
Yagami Campus, 3-14-1, Hiyoshi, Kohoku-ku, Yokohama, 223-8522, Japan}
\thanks{
The second author was supported by JSPS KAKENHI Grant Number JP17K14194.
This research was supported by Global Station for Big Data and Cybersecurity, a project of Global Institution for Collaborative Research and Education at Hokkaido University.
}
\begin{document}

\begin{abstract}

In this paper, we classify relatively minimal genus-$1$ holomorphic Lefschetz pencils up to smooth isomorphism. 
We first show that such a pencil is isomorphic to either the pencil on $\PP^1\times \PP^1$ of bi-degree $(2,2)$ or a blow-up of the pencil on $\PP^2$ of degree $3$, provided that no fiber of a pencil contains an embedded sphere. (Note that one can easily classify genus-$1$ Lefschetz pencils with an embedded sphere in a fiber.) 
We further determine the monodromy factorizations of these pencils and show that the isomorphism class of a blow-up of the pencil on $\PP^2$ of degree $3$ does not depend on the choice of blown-up base points. 
We also show that the genus-$1$ Lefschetz pencils constructed by Korkmaz-Ozbagci (with nine base points) and Tanaka (with eight base points) are respectively isomorphic to the pencils on $\PP^2$ and $\PP^1\times \PP^1$ above, in particular these are both holomorphic. 

\end{abstract}

\maketitle

\section{Introduction}

Classification problems of Lefschetz fibrations up to smooth isomorphism have attracted a lot of interest since around 1980. 
The first result concerning the problems was given in \cite{KasDeformationEllipticsurf,Moishezon_classification_genus1LF}, in which Kas and Moishezon independently classified genus-$1$ Lefschetz fibrations over the $2$-sphere. 
This classification result was extended to more general genus-$1$ fibrations: those with general base spaces and achiral singularities \cite{IwaseTorusFibration,MatsumotoTorusFibration,MatsumotoDiffeoEllipticsurf}.
Furthermore, Siebert and Tian \cite{SiebertTiangenus2LF} classified genus-$2$ Lefschetz fibrations over the $2$-sphere with transitive monodromies and no reducible fibers by showing that such fibrations are always holomorphic. 
Classifications up to stabilizations by fiber sums have also been studied in \cite{AurouxFibersumgenus2LF,AurouxStableclassificationLF,EndoKamadaChartHypLF,EndoKamadaDiracBraid,EHKTChartStabilizationLF}.

Whereas there are various results on classifications of Lefschetz \textit{fibrations}, very little is known about those of Lefschetz \textit{pencils}, except for the classification of genus-$0$ pencils that is given implicitly in \cite{PlamenevskayaVHMorris}.
In this paper, we will deal with the classification problem of genus-$1$ Lefschetz pencils. 
We first show that a genus-$1$ holomorphic Lefschetz pencil is isomorphic to either of the standard ones given below:

\begin{theorem}\label{T:genus-1 LP is blow-up of N_9 or S_8}

Let $f:X\dashrightarrow \PP^1$ be a genus-$1$ relatively minimal holomorphic Lefschetz pencil.
Suppose that no fibers of $f$ contain an embedded sphere. 
Then either of the following holds: 

\begin{itemize}

\item 
$f$ is smoothly isomorphic to the one obtained by blowing-up the Lefschetz pencil $f_n:\PP^2\dashrightarrow \PP^1$, which is the composition of the Veronese embedding $v_3:\PP^2\hookrightarrow \PP^9$ of degree $3$ and a generic projection $\PP^9 \dashrightarrow \PP^1$.

\item 
$f$ is smoothly isomorphic to the Lefschetz pencil $f_s:\PP^1\times \PP^1\dashrightarrow \PP^1$, which is the composition of the Segre embedding $\sigma:\PP^1\times \PP^1\hookrightarrow \PP^3$, the Veronese embedding $v_2:\PP^3\hookrightarrow \PP^9$ of degree $2$, and a generic projection $\PP^9\dashrightarrow \PP^1$. 

\end{itemize}

\end{theorem}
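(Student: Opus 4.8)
The plan is to resolve the pencil, identify the resulting elliptic surface via the classification of genus-$1$ Lefschetz fibrations, and then reconstruct $(X,f)$ by showing that $X$ must be a del Pezzo surface on which $f$ is an anticanonical Lefschetz pencil. Concretely, I would first blow up the base locus $B$ of $f$: since $f$ is a pencil, $B$ is a non-empty finite set of $b=|B|\ge 1$ reduced points at which all members meet transversally, so we obtain a genuine genus-$1$ Lefschetz fibration $\widetilde f\colon\widetilde X\to\PP^1$ whose exceptional curves form $b$ pairwise disjoint $(-1)$-sections. Relative minimality passes to $\widetilde f$ (the blow-up resolves $B$ by sections, not by fiber components), and the hypothesis on embedded spheres forces each fiber of $f$ -- a reduced connected curve of arithmetic genus $1$ -- to be irreducible, because a reduced connected nodal curve of arithmetic genus $1$ that is reducible has a smooth rational component; hence every singular fiber of $\widetilde f$ is an irreducible one-nodal curve. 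By the Kas--Moishezon classification $\widetilde X$ is the elliptic surface $E(n)$, and since $E(n)$ is minimal for $n\ge 2$ while $\widetilde X$ carries $(-1)$-curves, we get $n=1$. Thus $e(\widetilde X)=12$, there are exactly twelve singular fibers, $X$ is rational (a blow-down of $E(1)$), $1\le b\le 9$, and, as $\chi(\O_X)=1$, Noether's formula gives $K_X^2=12-e(X)=12-(12-b)=b$.

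Next I would show that $f$ is anticanonical and $X$ is a genuine del Pezzo surface. For a general member $C$ one has $C^2=b$ and, by adjunction, $C\cdot K_X=-C^2$; since $|C|$ has no fixed component (its general member is irreducible), $C$ is nef, hence big and nef. Then $D:=C+K_X$ satisfies $D^2=D\cdot K_X=D\cdot C=0$, and writing $D=\sum a_iD_i$ with $a_i>0$, nefness of $C$ forces $C\cdot D_i=0$ for all $i$, so the $D_i$ span a negative-definite sublattice of $\NS(X)$ by the Hodge index theorem; hence $D^2=0$ forces $D=0$, i.e.\ $C\sim -K_X$ and $f\subset|-K_X|$. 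In particular $-K_X$ is nef with $(-K_X)^2=b>0$, so $X$ is a weak del Pezzo surface (or $\PP^2$); and any $(-2)$-curve $C_0$ would satisfy $C_0\cdot(-K_X)=0$, hence would be disjoint from the general fiber and so contained in a single, necessarily reducible, fiber of $f$, contradicting the no-embedded-sphere hypothesis. Therefore $X$ has no $(-2)$-curve and is a genuine del Pezzo surface of degree $b$.

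Finally, a del Pezzo surface of degree $b$ is $\PP^2$ when $b=9$, is $\PP^1\times\PP^1$ or $\mathbb{F}_1$ when $b=8$, and is $\PP^2$ blown up at $9-b$ points in general position when $b\le 7$; in every case except $\PP^1\times\PP^1$ the exceptional $(-1)$-curves of a morphism $X\to\PP^2$ are sections of the anticanonical pencil (they meet $-K_X$ once), so contracting them exhibits $f$ as a blow-up at base points of a Lefschetz pencil of plane cubics, while if $X\cong\PP^1\times\PP^1$ then $f\subset|-K_X|=|\O(2,2)|$ is a Lefschetz pencil of bidegree-$(2,2)$ curves. Since pencils of plane cubics (resp.\ of $(2,2)$-curves) are parametrized by a Grassmannian in which the Lefschetz ones form a dense connected open set containing $f_n$ (resp.\ $f_s$), and any connected family of Lefschetz pencils is fibrewise smoothly isomorphic by parallel transport, I would conclude that $f$ is smoothly isomorphic to a blow-up of $f_n$, respectively to $f_s$. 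I expect the main obstacle to be the second step: deducing from the numerical output of the resolution that $X$ carries \emph{no} $(-2)$-curve -- equivalently that it is a genuine rather than merely a weak del Pezzo surface -- which is exactly where the hypothesis forbidding embedded spheres in fibers is essential, since otherwise one would also have to treat the anticanonical Lefschetz pencils on weak del Pezzo surfaces, whose members can be reducible.
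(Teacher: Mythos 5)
Your proposal is correct in its essentials but reaches the key classification by a genuinely different route. The paper (Theorem~\ref{T:classification divisor fibers}) first invokes Moishezon's classification to identify $\tilde X$ with $\PP^2\sharp 9\overline{\PP^2}$, cites Friedman--Qin to conclude $X$ is a rational complex surface (hence $\PP^2$ or a blow-up of a Hirzebruch surface $S_n$), and then pins down the class of $F$ by a hands-on lattice computation: writing $F\sim aE_\infty+bC-\sum c_i\hat E_i$, combining the base-point count, the adjunction/genus constraint and the positivity forced by Proposition~\ref{T:properties holLP}(3), and squeezing with the discriminant of a quadratic in $a$ plus Cauchy--Schwarz to force $c_i=1$, $a=2$, $n\in\{0,1\}$. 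You instead observe directly that $F^2=-K_X\cdot F=K_X^2=b$ (Noether plus adjunction) and conclude $F\sim -K_X$ by the Hodge index theorem, after which the no-embedded-sphere hypothesis rules out $(-2)$-curves and the classification of del Pezzo surfaces finishes the argument. This is shorter and more conceptual, makes the role of the hypothesis (excluding weak del Pezzos) transparent, and --- since $q=p_g=0$ can be read off from $b_1=0$ and $b_2^+=1$, and rationality comes for free from the del Pezzo classification --- it can even dispense with the appeal to Friedman--Qin; the paper's computation is by contrast entirely elementary intersection theory on explicitly presented surfaces. Two small repairs: your parenthetical claim that relative minimality of $f$ automatically passes to $\widetilde f$ is not right as stated (a square-zero sphere in a fiber through a base point would become a $(-1)$-sphere upstairs; this is exactly what the paper's Lemma~\ref{T:blow-up of rel min is rel min} handles), but your subsequent argument that the one-node structure plus the no-sphere hypothesis forces all fibers to be irreducible repairs this; and the phrase ``writing $D=\sum a_iD_i$ with $a_i>0$'' presumes effectivity of $D=C+K_X$ for no reason --- the clean statement is simply that $D\cdot C=0$, $D^2=0$ and $C^2>0$ force $D\equiv 0$ by Hodge index, whence $D\sim 0$ because $H_1(X;\Z)=0$. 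The endgame (connectedness of the space of Lefschetz pencils in a fixed very ample linear system and parallel transport) coincides with the paper's Remark~\ref{R:rel LP proj} and the $\xi$-map argument in its proof of Theorem~\ref{T:genus-1 LP is blow-up of N_9 or S_8}.
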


\noindent
The subscripts "n" and "s" for the Lefschetz pencils $f_n$ and $f_s$ represent the properties "non-spin" and "spin", respectively. 
Note that, needless to say, the blow-ups of $f_s$ also give Lefschetz pencils. Theorem~\ref{T:genus-1 LP is blow-up of N_9 or S_8} implies that such pencils are isomorphic to the blow-ups of $f_n$.
The assumption of relative minimality and the additional requirement that no fibers contain an embedded sphere with any self-intersection number should not be confused. The latter is required to exclude inessential Lefschetz pencils. For more detail, see Remark~\ref{rem:nosphere}.

Although the isomorphism classes of the pencils $f_n$ and $f_s$ do not depend on the choice of generic projections $\PP^9\dashrightarrow \PP^1$ (cf.~\cref{R:rel LP proj}), one cannot deduce immediately from \cref{T:genus-1 LP is blow-up of N_9 or S_8} that the isomorphism class of a blow-up of $f_n$ does not depend on the choice of blown-up base points (one can indeed find  in \cite{Hamada_MCK_preprint} examples of a pair of non-isomorphic pencils that are obtained by blowing-up a common pencil at the same number but different combinations of base points).
We next address this issue by examining the monodromies of $f_n$ and $f_s$. 

It is a standard fact in the literature that there is one-to-one correspondence between the isomorphism classes of genus-$g$ Lefschetz pencils with $m$ critical points and $k$ base points and the Hurwitz equivalence classes of factorizations 
\[
t_{c_m} \cdots t_{c_1} = t_{\delta_{1}} \cdots t_{\delta_{k}}
\]
of the boundary multi-twist $t_{\delta_{1}} \cdots t_{\delta_{k}}$ as products of positive Dehn twists in the mapping class group of a $k$-holed surface of genus $g$.
Here, $\delta_i$ stands for a simple closed curve parallel to the $i$-th boundary component.
Such a factorization is called a \textit{monodromy factorization} in general, or also a \textit{$k$-holed torus relation} when $g=1$.
Relying on the theory of braid monodromies due to Moishezon-Teicher \cite{MTI,MTII,MTIII,MTIV,MRT} we determine the monodromy factorizations of $f_n$ and $f_s$.
We further analyze the Hurwitz equivalence classes of the factorizations, and eventually show the following: 

\begin{theorem}\label{T:(main)smooth classification genus1 holLP}

Let $f:X\dashrightarrow \PP^1$ be a relatively minimal genus-$1$ holomorphic Lefschetz pencil without embedded spheres in fibers. 
The monodromy factorization of $f$ is Hurwitz equivalent to that of one of the pencils in table \ref{Tbl:list hol genus1 LP}. 
In particular, the isomorphism class of a blow-up of $f_n$ does not depend on the choice of blown-up base points.

\end{theorem}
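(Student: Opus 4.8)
The plan is as follows. By \cref{T:genus-1 LP is blow-up of N_9 or S_8}, $f$ is smoothly isomorphic either to $f_s$ or to a blow-up of $f_n$ at a subset of its base points. It therefore suffices to (A) determine the monodromy factorizations of $f_s$ and of $f_n$ explicitly, and (B) show that for each fixed number $j$ of blown-up base points all blow-ups of $f_n$ at $j$ base points have Hurwitz-equivalent monodromy factorizations. Part~(B) is the last assertion of the theorem, and together with (A) it makes \cref{Tbl:list hol genus1 LP} a finite and complete list --- the factorization of a blow-up of $f_n$ is then pinned down by the number of blown-up base points alone, and the case $f\cong f_s$ is matched directly. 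The first point to record is that a generic pencil of hyperplanes in $\PP^9$ cuts out on $v_3(\PP^2)$ a generic pencil of plane cubic curves, so $f_n$ is nothing but a generic Lefschetz pencil of cubics, and likewise $f_s$ is a generic Lefschetz pencil of curves of bidegree $(2,2)$ on $\PP^1\times\PP^1$; an Euler characteristic count then gives $9$ base points and $12$ critical points for $f_n$, and $8$ base points and $12$ critical points for $f_s$, so the monodromy factorizations are a $9$-holed and an $8$-holed torus relation respectively.

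To pin these relations down I would build on the Moishezon--Teicher theory of braid monodromies \cite{MTI,MTII,MTIII,MTIV,MRT}. Composing $v_3$ (resp.\ $v_2\circ\sigma$) with a generic linear projection $\PP^9\dashrightarrow\PP^2$ presents the surface as a $9$-sheeted (resp.\ $8$-sheeted) branched cover of $\PP^2$; one computes the braid monodromy of its branch curve by the standard regeneration procedure, and then, composing with a further generic projection $\PP^2\dashrightarrow\PP^1$, translates that braid-monodromy factorization into the monodromy factorization of the pencil via the usual dictionary (each local braid monodromy at a branch point contributes a Dehn twist in the fibre, and the boundary multitwist is read off from the base locus). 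This yields an explicit word $t_{c_{12}}\cdots t_{c_1}=t_{\delta_1}\cdots t_{\delta_9}$ in the mapping class group $\MCG(\Sigma_{1,9})$ of the $9$-holed torus for $f_n$, and the analogous word in $\MCG(\Sigma_{1,8})$ for $f_s$; these, together with the blow-ups derived from them below, are the entries of \cref{Tbl:list hol genus1 LP}.

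Next, recall that blowing up the base point of $f_n$ attached to the boundary curve $\delta_i$ has the following effect on monodromy factorizations: it is the image under the capping homomorphism $\MCG(\Sigma_{1,9})\to\MCG(\Sigma_{1,8};\ast)$ that glues a once-marked disk onto the $i$-th boundary component, the marked point becoming the new exceptional section; under this homomorphism $t_{\delta_i}\mapsto 1$ while each vanishing-cycle twist goes to the twist about its image curve. Iterating over a subset $S$ of the boundary components recovers the monodromy factorization of the corresponding multiple blow-up. Hence (B) reduces to showing that the word for $f_n$ is carried back to itself, modulo Hurwitz moves and conjugation in $\MCG(\Sigma_{1,9})$, by every permutation of its nine boundary components: since $S_9$ acts transitively on $j$-element subsets for every $j$, capping any two such subsets then produces Hurwitz-equivalent words, so the corresponding blow-ups are smoothly isomorphic.

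The crux is this $S_9$-symmetry of the word; as the examples in \cite{Hamada_MCK_preprint} show, such symmetry fails for a general pencil, so it has to be extracted from the explicit factorization found in the second step, and I expect this to absorb the bulk of the work: for each adjacent transposition $(i\ i{+}1)$ one must produce a finite sequence of Hurwitz moves together with a conjugation by a boundary-permuting element of $\MCG(\Sigma_{1,9})$ carrying $t_{c_{12}}\cdots t_{c_1}=t_{\delta_1}\cdots t_{\delta_9}$ to the word obtained by swapping $\delta_i$ and $\delta_{i+1}$. A convenient way to organise and sanity-check this is to pass to the closed-up picture --- the genus-$1$ Lefschetz fibration on the rational elliptic surface with twelve nodal fibres and nine disjoint $(-1)$-sections, whose monodromy factorization is the classical relation $(t_at_b)^6=1$ \cite{KasDeformationEllipticsurf,Moishezon_classification_genus1LF} --- in which the permutation symmetry of the nine sections reflects that of the universal family of cubic pencils acting on its base locus, and then to transport the resulting automorphism back down through the capping homomorphisms. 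Once the $S_9$-symmetry is in place, combining it with \cref{T:genus-1 LP is blow-up of N_9 or S_8} and the explicit factorizations of the second step completes the proof.
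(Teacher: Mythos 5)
Your reduction via \cref{T:genus-1 LP is blow-up of N_9 or S_8}, the identification of $f_n$ and $f_s$ as generic pencils of cubics and of bidegree-$(2,2)$ curves, the Moishezon--Teicher braid-monodromy computation of their factorizations, and the interpretation of blow-up as capping a boundary component all match the paper. The divergence, and the gap, is in how you prove independence of the choice of blown-up base points. You reduce everything to a single strong claim: that the $9$-holed word for $f_n$ is carried to itself, up to Hurwitz moves and conjugation, by \emph{every} permutation of its nine boundary components, to be verified by exhibiting such an equivalence for each adjacent transposition. This claim is nowhere established in your proposal, and it is genuinely stronger than what is needed or what the paper proves. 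Note that for your argument you need transitivity on $j$-element subsets for every $j\leq 8$ simultaneously, which by homogeneity considerations essentially forces the group of realizable permutations to be (close to) all of $S_9$; if it turned out to be a proper subgroup not transitive on some subset size, the approach would collapse. Your proposed sanity check via the closed-up fibration $(t_at_b)^6=1$ cannot detect this, since capping all boundaries destroys exactly the information that distinguishes the base points; and the appeal to the monodromy of the universal family of cubic pencils on its base locus would require both knowing that this monodromy group is the full symmetric group and translating an algebro-geometric monodromy into explicit Hurwitz moves on your particular word --- neither step is automatic.

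The paper avoids this entirely by a downward induction on the number of boundary components. It first simplifies the braid-monodromy output to standard words $N_9=\partial_9$ and $S_8=\partial_8$ (this simplification is itself a nontrivial Hurwitz computation that your plan folds silently into the first step), and then shows, by explicit Hurwitz moves organized around the visible $\Z/3$ (resp.\ $\Z/4$, $\Z/6$, \dots) rotational symmetries of the standard words, that capping \emph{any single} boundary component of $N_k$ --- and of $S_8$ when $k=8$ --- yields a word Hurwitz equivalent to $N_{k-1}$. Iterating this handles arbitrary subsets of base points without ever needing a transitive symmetric-group action on the original word: at each stage one only checks single cappings of an already-simplified relation, and only a few cases per stage survive after exploiting the partial rotational symmetry. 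If you want to salvage your route, you should either carry out the adjacent-transposition computations in full (and be prepared for the possibility that some transposition is not realizable), or restructure step (B) as the one-capping-at-a-time induction.
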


\renewcommand{\arraystretch}{1.1}
\begin{table}[htpb]\phantomsection\label{Tbl:list hol genus1 LP}
	\begin{center}
	\begin{tabular}{|c||ccc|} \hline 
		pencil & 
\begin{minipage}[c]{18mm}
\centering
\V{.2em}

number of 

\V{-.2em}

base points
\end{minipage} &
monodromy factorization
& total space \\ \hline\hline
		$f_n$ & $9$ & $t_{a_1} t_{b_1} t_{b_2} t_{b_3} t_{a_4} t_{b_4} t_{b_5} t_{b_6} t_{a_7} t_{b_7} t_{b_8} t_{b_9} =\partial_9$ & $\PP^2$ \\ \hline
		$f_s$ & $8$ & $t_{a_1} t_{b_1} t_{b_2} t_{a_3} t_{b_3} t_{b_4} t_{a_5} t_{b_5} t_{b_6} t_{a_7} t_{b_7} t_{b_8} =\partial_8$ & $\PP^1 \times \PP^1$ \\ \hline
		$f_n \sharp \overline{\PP}{}^{2}$ & $8$ &$t_{a_1} t_{b_1} t_{a_2} t_{b_2} t_{b_3} t_{a_4} t_{b_4} t_{b_5} t_{b_6} t_{a_7} t_{b_7} t_{b_8} =\partial_8$ & $\PP^2 \sharp \overline{\PP}{}^{2}$ \\ \hline
		$f_n \sharp 2\overline{\PP}{}^{2}$ & $7$ & $t_{a_1} t_{b_1} t_{a_2} t_{b_2} t_{a_3} t_{b_3} t_{b_4} t_{a_5} t_{b_5} t_{b_6} t_{a_7} t_{b_7} =\partial_7$ & $\PP^2 \sharp 2\overline{\PP}{}^{2}$ \\ \hline
		$f_n \sharp 3\overline{\PP}{}^{2}$ & $6$ & $t_{a_1} t_{b_1} t_{a_2} t_{b_2} t_{a_3} t_{b_3} t_{a_4} t_{b_4} t_{a_5} t_{b_5} t_{a_6} t_{b_6} =\partial_6$ & $\PP^2 \sharp 3\overline{\PP}{}^{2}$ \\ \hline
		$f_n \sharp 4\overline{\PP}{}^{2}$ & $5$ & $t_{a_1}^2 t_{b_1} t_{a_2}^2t_{b_2} t_{a_3} t_{b_3} t_{a_4} t_{b_4} t_{a_5} t_{b_5} =\partial_5$ & $\PP^2 \sharp 4\overline{\PP}{}^{2}$ \\ \hline
		$f_n \sharp 5\overline{\PP}{}^{2}$ & $4$ &$t_{a_1}^2 t_{b_1} t_{a_2} ^2 t_{b_2} t_{a_3}^2 t_{b_3} t_{a_4}^2 t_{b_4} =\partial_4$ & $\PP^2 \sharp 5\overline{\PP}{}^{2}$ \\
		& & $\sim$ $(t_{a_1} t_{a_3} t_{b} t_{a_2} t_{a_4} t_{b})^2 =\partial_4$ & \\ \hline
		$f_n \sharp 6\overline{\PP}{}^{2}$ & $3$ &$t_{a_1}^3t_{b_1} t_{a_2}^3 t_{b_2} t_{a_3} ^3 t_{b_3} =\partial_3$ & $\PP^2 \sharp 6\overline{\PP}{}^{2}$ \\ 
		& & $\sim$ $(t_{a_1} t_{a_2} t_{a_3} t_{b})^3 =\partial_3$ & \\ \hline
		$f_n \sharp 7\overline{\PP}{}^{2}$ & $2$ & $(t_{a_1} t_{b} t_{a_2})^4 =\partial_2$ & $\PP^2 \sharp 7\overline{\PP}{}^{2}$ \\ \hline
		$f_n \sharp 8\overline{\PP}{}^{2}$ & $1$ & $(t_{a_1} t_{b})^6 =\partial_1$ & $\PP^2 \sharp 8\overline{\PP}{}^{2}$ \\
		\hline
	\end{tabular}
	\end{center}
	\caption{Classification of the genus-$1$ holomorphic Lefschetz pencils.
The curves in the table are given in \cref{F:k=general} and $\partial_k$ represents the boundary multi-twist $t_{\delta_1}\cdots t_{\delta_k}$.}
	\label{T:classification}
\end{table}

\begin{figure}[htbp]
	\centering
	\includegraphics[height=170pt]{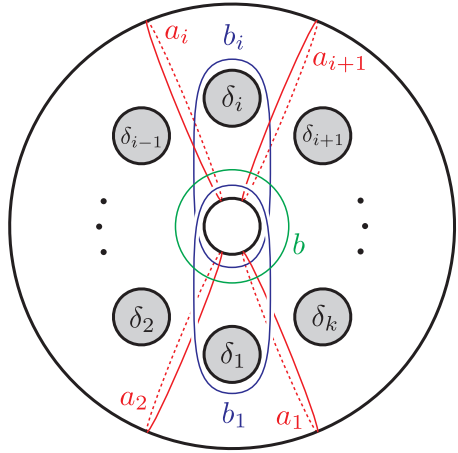}
	\caption{The curves on the $k$-holed torus $\Sigma_1^k$.} 	
	\label{F:k=general}
\end{figure}

Note that according to the aforementioned works of Kas and Moishezon~\cite{KasDeformationEllipticsurf,Moishezon_classification_genus1LF} the only genus-$1$ Lefschetz fibration that admits a $(-1)$-section is the well-known rational elliptic fibration $E(1) \to \PP^1$, whose monodromy factorization is $(t_a t_b)^6 =1$ where $a$ is the meridian and $b$ is the longitude of the torus.
Thus, any genus-$1$ Lefschetz pencil, even a non-holomorphic one (if exists), must descend to this fibration after blowing-up all the base points.
This is clearly reflected in table~\ref{T:classification}, where once more blowing-up of the pencil $f_n \sharp 8\overline{\PP}{}^{2}$ results in $E(1)= \PP^2 \sharp 9\overline{\PP}{}^{2}$ and $(t_at_b)^6=1$.

Examples of explicit $k$-holed torus relations were first discovered by Korkmaz and Ozbagci \cite{KorkmazOzbagci2008}, and then by Tanaka \cite{Tanaka2012}. In both of the works, the authors constructed those relations by combining the known relations (i.e.~the $2$-chain relation and the lantern relation) in the mapping class groups.\footnote{The first author also found factorizations in~\cite{Hamada2014, Hamada_MCK_preprint} in different contexts, which in fact can be shown to be Hurwitz equivalent to either Korkmaz-Ozbagci's or Tanaka's.}
We will show that the $k$-holed torus relations of Korkmaz-Ozbagci and Tanaka are Hurwitz equivalent to the monodromy factorizations in table~\ref{Tbl:list hol genus1 LP}, in particular we conclude that the Lefschetz pencils associated with their relations are holomorphic (\cref{T:isom Korkmaz-Ozbagci,T:isom Tanaka}).

The virtue of our presentations of the $k$-holed torus relations in table~\ref{Tbl:list hol genus1 LP} is that 
the curves involved are remarkably simple as they are well-organized lifts of the meridian and longitude of a closed torus.
As the $k$-holed torus relations are fundamentally important to construct relations in the mapping class groups of even higher genera, having simpler expressions may help those who try to use them.

As our results in the present paper take care of holomorphic pencils, the next step shall be the ultimate classification of genus-$1$ {\it smooth} Lefschetz pencils.
Although we speculate that any genus-$1$ Lefschetz pencil is isomorphic to one of the holomorphic ones, we do not have the machinery to prove this.
This leaves the following open question.
\begin{question}
Is there a non-holomorphic genus-$1$ Lefschetz pencil? In other words, is there a $k$-holed torus relation that is not Hurwitz equivalent to any of the $k$-holed torus relations in Table~\ref{Tbl:list hol genus1 LP}? 
\end{question}

The paper is organized as follows. 
In \cref{sec:preliminaries}, we briefly review basic properties of holomorphic Lefschetz pencils and monodromy factorizations. 
\Cref{sec:LP on complex surf} is devoted to proving \cref{T:genus-1 LP is blow-up of N_9 or S_8}. 
In \cref{sec:van cyc LP}, we determine monodromy factorizations of the pencils $f_n$ and $f_s$. 
We analyze combinatorial properties of the monodromy factorizations of $f_n$ and $f_s$ in \cref{sec:combinatorial structure}, completing the proof of \cref{T:(main)smooth classification genus1 holLP}.

\section{Preliminaries}\label{sec:preliminaries}

Throughout this paper, we will assume that manifolds are smooth, connected, oriented and closed unless otherwise noted. 
We denote the $n$-dimensional complex projective space by $\PP^n$. 
Let $X$ be a $4$-manifold. 
A \emph{Lefschetz pencil} on $X$ is a smooth mapping $f:X\setminus B \to \PP^1$ defined on the complement of a non-empty finite subset $B\subset X$ satisfying the following conditions: 

\begin{itemize}

\item 
for any critical point $p\in X$ of $f$, there exists a complex coordinate neighborhood $(U,\varphi:U\to \C^2)$ (resp.~$(V,\psi:V\to \C)$) at $p$ (resp.~$f(p)$) compatible with the orientation such that $\psi\circ f\circ \varphi^{-1}(z,w)$ is equal to $z^2+w^2$, 

\item 
for any $b\in B$, there exist a complex coordinate neighborhood $(U,\varphi)$ of $b$ compatible with the orientation and an orientation preserving diffeomorphism $\xi:\PP^1\to \PP^1$ such that $\xi \circ f \circ \varphi^{-1}(z,w)$ is equal to $[z:w]$, 

\item 
the restriction $f|_{\Crit(f)}$ is injective. 

\end{itemize}

\noindent
The set $B$ is called the \emph{base point set} of $f$. 
In this paper we will use the dashed arrow $\dashrightarrow$ to represent Lefschetz pencils, e.g.~$f:X\dashrightarrow \PP^1$, when we do not need to represent the base point set explicitly. 
(Note that this symbol will be also used to represent meromorphic mappings.)
For a Lefschetz pencil $f$, the genus of the closure of a regular fiber is called the \emph{genus} of $f$.

A Lefschetz pencil $f$ is said to be \emph{relatively minimal} if no fiber of $f$ contains a $(-1)$-sphere. 
Let $f:X\dashrightarrow \PP^1$ be a Lefschetz pencil, $\tilde{X}$ be a blow-up of $X$ at a point and $\pi:\tilde{X}\to X$ be the blow-down mapping. 
One can construct a Lefschetz pencil $\tilde{f}:\tilde{X}\dashrightarrow \PP^1$ so that $\tilde{f} = f\circ \pi$ on the complement of the exceptional sphere. 
Conversely, any relatively non-minimal Lefschetz pencil can be obtained from a relatively minimal one by this construction. 
In particular, relatively non-minimal Lefschetz pencils are inessential in the context of classification, and thus, \textit{we will assume that Lefschetz pencils are relatively minimal unless otherwise noted.}

\subsection{Holomorphic Lefschetz pencils}

A Lefschetz pencil $f:X\dashrightarrow \PP^1$ is said to be \emph{holomorphic} if there exists a complex structure of $X$ such that $f$ is holomorphic and we can take biholomorphic $\varphi, \psi$ and $\xi$ in the conditions in the definition above. 
A Lefschetz pencil on a complex surface $S$ is said to be \emph{holomorphic} if it is holomorphic with respect to the given complex structure. 
For a complex surface $S$, it is well-known that a divisor $D \in \Div(S)$ gives rise to a line bundle over $S$, which we denote by $[D]$ (see \cite{GH} for details). 

\begin{proposition}\label{T:properties holLP}

Let $S$ be a complex surface, $f:S\dashrightarrow \PP^1$ be a holomorphic Lefschetz pencil and $F\subset S$ be the closure of a fiber of $f$. 

\begin{enumerate}

\item 
The genus of $f$ is equal to $(2+\mathcal{F}^2+K_S(\mathcal{F}))/2$, where $K_S\in H^2(S;\Z)$ is the canonical class of $S$ and $\mathcal{F}\in H_2(S;\Z)$ is the homology class represented by $F$.

\item 
There exist sections $s_0,s_1$ of the line bundle $[F]$ such that $f$ is equal to $[s_0:s_1]:S\dashrightarrow \PP^1$.  

\item 
Let $C\subset S$ be a irreducible curve. 
The intersection number $C\cdot F$ is greater than or equal to $0$. 
Furthermore, it is equal to $0$ if and only if $C$ is a component of a fiber of $f$ without base points. 

\end{enumerate}

\end{proposition}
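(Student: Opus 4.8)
The plan is to establish the three assertions of Proposition~\ref{T:properties holLP} using standard facts from the theory of complex surfaces (as in \cite{GH}) together with the local structure of a holomorphic Lefschetz pencil.

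\textbf{Setup.} First I would blow up the base points of $f$ to obtain a holomorphic Lefschetz \emph{fibration} $\hat{f}:\hat{S}\to\PP^1$, where $\hat{S}$ is the blow-up of $S$ at the $k=\mathcal{F}^2$ base points (each base point, where $f$ is locally $[z:w]$, is a simple base point of the linear system, so it is resolved by a single blow-up and the total transform of a general fiber is a smooth fiber of $\hat f$ disjoint from the exceptional curves, each exceptional curve being a section). The closure $F$ of a general fiber of $f$ is then a smooth curve in $S$, biholomorphic to a smooth fiber of $\hat f$; its genus is the genus of $f$ by definition.

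\textbf{Part (1).} Apply the adjunction formula on $S$: for the smooth curve $F\subset S$ one has $2g(F)-2 = \mathcal{F}^2 + K_S\cdot\mathcal{F}$, i.e.\ $g(F) = (2+\mathcal{F}^2+K_S\cdot\mathcal{F})/2$. Since $F$ is smooth this is immediate; the only point to check is that a general fiber is indeed smooth, which follows from the local normal forms in the definition (the critical points of $f$ contribute only finitely many singular fibers, and generic smoothness of the linear system away from base points is Bertini, or simply the fact that a Lefschetz pencil has finitely many singular fibers by the injectivity condition on $f|_{\Crit(f)}$).

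\textbf{Part (2).} The fibers of $f$ are the members of a pencil (a one-dimensional linear system) $\{F_t\}_{t\in\PP^1}$ of divisors on $S$, all linearly equivalent to $F$, with base locus exactly $B$. Two distinct members $F_0=\{s_0=0\}$ and $F_\infty=\{s_1=0\}$ of this pencil are given by sections $s_0,s_1\in H^0(S,[F])$ (using the standard correspondence between effective divisors and sections of the associated line bundle), and every other member is the zero divisor of $\lambda_0 s_0+\lambda_1 s_1$ for $[\lambda_0:\lambda_1]\in\PP^1$; hence the map sending a point $p\notin B$ to $[s_0(p):s_1(p)]\in\PP^1$ is well-defined, holomorphic, and has the fibers $F_t$, so it coincides with $f$ up to an automorphism of $\PP^1$ (which we absorb into the choice of $s_0,s_1$).

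\textbf{Part (3).} Let $C\subset S$ be an irreducible curve. If $C$ is not contained in any fiber, then $C\cap F$ for a general fiber $F$ is a nonempty finite set, and since both are complex curves all local intersection multiplicities are positive, so $C\cdot F = C\cdot\mathcal{F}>0$ — and in particular $\geq 0$. If $C$ is a component of some fiber $F_{t_0}$, then moving to a general fiber $F$ (linearly equivalent to $F_{t_0}$ and disjoint from $C$ away from the base points $B$), we get $C\cdot F = \#(C\cap B)$ counted with multiplicity, which is $\geq 0$ and equals $0$ precisely when $C$ passes through no base point. The only mild subtlety is justifying that one may compute $C\cdot\mathcal{F}$ on a fiber disjoint from $C$ outside $B$: this is just that $F$ can be chosen generically among the members of the pencil (all but finitely many members avoid containing $C$ and meet $C$ transversally away from $B$), and homological invariance of the intersection number. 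I expect \textbf{Part (2)} to be the part requiring the most care, since it is where one must pin down that the local forms $[z:w]$ at base points really do assemble into a globally defined rational map given by a pencil of sections of a single line bundle; the rest is adjunction and positivity of complex intersections.
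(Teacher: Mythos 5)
Your proposal is correct and follows essentially the same route as the paper: adjunction for (1), the standard divisor--section correspondence for (2) (which the paper simply defers to a reference), and for (3) a count of intersections with a generic fiber after separating off the contribution of the base points. The only cosmetic difference is in (3), where the paper passes to the blow-up and computes $F\cdot C=\sharp(\tilde F\cap\tilde C)+\sharp(C\cap B)$ via proper transforms, whereas you move $F$ within the pencil and invoke positivity of local intersection multiplicities directly on $S$; the two computations are equivalent.
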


\begin{proof}
(1) is merely a consequence of the adjunction formula, and we can prove (2) in the same way as that for \cite[Lemma 3.1]{HamadaHayano}. 
In what follows we will prove (3). 
Let $\tilde{S}$ be the complex surface obtained by blowing-up $S$ at all the base points of $f$ and $\tilde{C}\subset \tilde{S}$ (resp.~$\tilde{F}\subset \tilde{S}$) be the proper transform of $C$ (resp.~$F$).
The pencil $f$ induces a fibration $\tilde{f}:\tilde{S}\to \PP^1$.
Without loss of generality we can assume that $\tilde{F}$ does not contain any singular point of $\tilde{C}$ and any critical point of $\tilde{f}|_{\tilde{C}}$. 
Since $\tilde{F}$ is a fiber of $\tilde{f}$, the intersection number $\tilde{F}\cdot \tilde{C}$ is equal to $\sharp(\tilde{F} \cap \tilde{C})$. 
Hence we obtain:
\[
F\cdot C =\tilde{F}\cdot \tilde{C}+\sharp(C\cap B)=\sharp (\tilde{F}\cap \tilde{C})+\sharp(C\cap B)\geq 0. 
\]
Moreover, the equality holds only if $C\cap B=\emptyset$ and $\tilde{f}|_{\tilde{C}}$ is a constant map. 
The latter condition implies that $C$ is contained in a fiber of $f$. 
\end{proof}

\begin{remark}\label{R:rel LP proj}

For a line bundle $L$ over a complex surface $S$ with sections, we can define a meromorphic mapping $\varphi_L:S\dashrightarrow \PP^{l-1}$ as follows:
\[
\varphi_L(x) = [s_1(x):\cdots :s_l(x)],
\]
where $s_1,\ldots, s_l$ is a basis of $H^0(S;L)$. 
The statement (2) of \cref{T:properties holLP} implies that $f$ is the composition of $\varphi_{[F]}:S\dashrightarrow \PP^{m-1}$ (where $m=\dim H^0(S;[F])$) and a projection $\PP^{m-1}\dashrightarrow \PP^1$. 
Note that the composition of $\varphi_L$ and a projection $\PP^l\dashrightarrow \PP^1$ is not always a Lefschetz pencil. 
It is known, however, that the composition is a Lefschetz pencil provided that $L$ is very ample and the projection is generic. 
Moreover, the smooth isomorphism class of the Lefschetz pencil does not depend on the choice of this projection (see \cite{Voisin,HamadaHayano}). 

\end{remark}

\subsection{Monodromy factorizations}\label{S:monodromy factorization}

For a compact oriented connected surface $\Sigma$ (possibly with boundaries), we denote by $\Diff(\Sigma)$ the set of self-diffeomorphisms of $\Sigma$ preserving the boundary pointwise, endowed with the Whitney $C^\infty$-topology. 
Let $\MCG(\Sigma) = \pi_0(\Diff(\Sigma))$, which has the group structure defined by the composition of representatives.

Let $f:X\setminus B\to \PP^1$ be a genus-$g$ Lefschetz pencil with $k$ base points and $Q=\{q_1,\ldots, q_m\}\subset \PP^1$ be the set of critical values of $f$. 
We take a point $q_0\in \PP^1\setminus Q$ and a path $\alpha_i\subset \PP^1$ ($i=1,\ldots,m$) from $q_0$ to $q_i$ satisfying the following conditions: 

\begin{itemize}

\item 
$\alpha_1,\ldots,\alpha_m$ are mutually distinct except at the common initial point $q_0$, 

\item 
$\alpha_1,\ldots, \alpha_m$ appear in this order when we go around $q_0$ counterclockwise. 

\end{itemize}

\noindent
The system of paths $(\alpha_1,\ldots, \alpha_m)$ satisfying the conditions above is called a \emph{Hurwitz path system} of $f$. 
Let $\gamma_i\subset \PP^1$ be a based loop with the base point $q_0$ obtained by connecting $q_0$ with a small circle oriented counterclockwise by $\alpha_i$. 
It is known that the monodromy along $\gamma_i$ is the Dehn twist along some simple closed curve $c_i\subset \overline{f^{-1}(q_0)}$, called a \emph{vanishing cycle} of $f$ with respect to the path $\alpha_i$. 
Furthermore, we can obtain the following relation in $\MCG(\overline{f^{-1}(q_0)}\setminus \nu B)$: 
\begin{equation}\label{Eq:monodromy factorization}
t_{c_m}\cdots t_{c_1} = t_{\delta_1} \cdots t_{\delta_k}, 
\end{equation}
where $\nu B$ is a tubular neighborhood of $B\subset \overline{f^{-1}(q_0)}$ and $\delta_1,\ldots, \delta_k\subset \overline{f^{-1}(q_0)}\setminus \nu B$ are simple closed curves parallel to the boundary components.  
We call this relation a \emph{monodromy factorization} of $f$. 
Conversely, let $\Sigma_g^k$ be a genus-$g$ compact surface with $k$ boundary components, and $c_1,\ldots, c_m\subset \Sigma_g^k$ be simple closed curves satisfying the relation \eqref{Eq:monodromy factorization} in $\MCG(\Sigma_g^k)$. 
We can construct a genus-$g$ Lefschetz pencil $f:X\setminus B \to \PP^1$ with $k$ base points and vanishing cycles $c_1,\ldots, c_m$, under some identification of the complement $\overline{f^{-1}(q_0)}\setminus \nu B$ of the closure of a regular fiber with $\Sigma_g^k$.

\section{Complex surfaces admitting genus-$1$ Lefschetz pencils}\label{sec:LP on complex surf}

This section is devoted to proving \cref{T:genus-1 LP is blow-up of N_9 or S_8}, which one can easily deduce from the following theorem. 

\begin{theorem}\label{T:classification divisor fibers}

Let $S$ be a complex surface, $f:S\dashrightarrow \PP^1$ be a genus-$1$ holomorphic Lefschetz pencil and $F$ be the closure of a fiber of $f$. 
Suppose that no fibers of $f$ contain an embedded sphere. 
Then either of the following holds: 

\begin{itemize}

\item 
the complex surface $S$ can be obtained by blowing-up $\PP^2$ at $l \leq 8$ points and $F$ is linearly equivalent to $3 H - \sum_{i=1}^{l} E_i$, where $H$ is the total transform of a projective line $H'$ in $\PP^2$ and $E_1,\ldots, E_l$ are the exceptional spheres.

\item 
the complex surface $S$ is $\PP^1\times \PP^1$ and $F$ is linearly equivalent to $2F_1+2F_2$, where $F_i$ is a fiber of the projection $\pi_i:\PP^1\times \PP^1\to \PP^1$ onto the $i$-th component. 

\end{itemize}

\end{theorem}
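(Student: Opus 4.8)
The plan is to show that the ``no embedded sphere'' hypothesis forces the fiber class $F$ to be ample, and then to pin down $(S,F)$ by anticanonical geometry. Write $k=\#B\ge 1$. Since $f$ has the local model $[z:w]$ at each base point, two distinct fibers of $f$ meet transversally exactly along $B$, so $F^2=k$; and the genus formula (\cref{T:properties holLP}(1)) gives $F^2+K_S\cdot F=0$, hence $K_S\cdot F=-k<0$. To see that $F$ is ample, recall from \cref{T:properties holLP}(3) that $F$ is nef and that an irreducible curve $C$ satisfies $C\cdot F=0$ exactly when $C$ is a component, containing no base point, of some fiber of $f$; I claim the hypothesis rules out such a $C$, which together with $F^2=k>0$ and the Nakai--Moishezon criterion gives ampleness. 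Indeed, a singular fiber $F_0$ of a Lefschetz pencil has exactly one node (because $f|_{\Crit(f)}$ is injective) and is smooth elsewhere, so comparing with $p_a(F_0)=p_a(F)=1$ forces $F_0$ to have at most two irreducible components; and when it has two, say $F_0=C_1\cup C_2$, the $C_i$ are smooth, meet transversally in one point, and satisfy $p_a(C_1)+p_a(C_2)=1$, so one of them is a smooth rational curve --- an embedded sphere inside $F_0$. By hypothesis this never happens, so every fiber of $f$ is irreducible, whence no irreducible curve is a proper fiber component and $C\cdot F>0$ for all irreducible $C$.

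Next I would show $S$ is rational with $K_S^2=k$. If $\kappa(S)\ge0$, take $D\in|mK_S|$ for some $m\ge1$: if $D=0$ then $K_S\cdot F=0$, while if $D\neq0$ then $D\cdot F>0$ by ampleness of $F$; both contradict $K_S\cdot F<0$. So $\kappa(S)=-\infty$, and therefore $\chi(\O_S)\le1$, with equality iff $S$ is rational. Blowing up the $k$ base points yields a holomorphic elliptic fibration $\tilde f\colon\tilde S\to\PP^1$ with a section, and $\tilde f$ has a singular fiber: otherwise all fibers of $\tilde f$ are smooth elliptic curves, forcing $\tilde S\cong E\times\PP^1$ for an elliptic curve $E$, which is minimal and hence cannot be a nontrivial blow-up of $S$. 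Re-running the above fiber analysis for $\tilde f$ --- and using that $f$ is relatively minimal and has no embedded sphere in a fiber --- shows that $\tilde f$ has no reducible fibers, hence no $(-1)$-curve in any fiber, so $\tilde f$ is a relatively minimal elliptic fibration over $\PP^1$; then $K_{\tilde S}^2=0$ and Noether's formula gives $\chi(\O_{\tilde S})=e(\tilde S)/12>0$, so $\chi(\O_{\tilde S})\ge1$ (being an integer). As $\chi(\O_S)=\chi(\O_{\tilde S})$, we get $\chi(\O_S)=1$ and $S$ rational, and $K_S^2=K_{\tilde S}^2+k=k\ge1$.

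Finally I would identify $(S,F)$. One computes $(K_S+F)\cdot F=K_S\cdot F+F^2=0$, so $(K_S+F)^2\le0$ by the Hodge index theorem; on the other hand Riemann--Roch gives $\chi(\O_S(K_S+F))=\chi(\O_S)+\tfrac12(K_S+F)\cdot F=1$, while $h^0(-F)=0$ since $-F$ is anti-ample, so $h^0(K_S+F)\ge1$, i.e.~$K_S+F$ is effective. An effective divisor having zero intersection with the ample class $F$ is numerically trivial, and since $H^1(\O_S)=0$ it is linearly trivial; hence $F\sim-K_S$. Thus $-K_S$ is ample, i.e.~$S$ is a del Pezzo surface of degree $K_S^2=k$, and by the classification of del Pezzo surfaces $S$ is $\PP^2$ (and then $F\sim3H=-K_{\PP^2}$, $k=9$), or $\PP^1\times\PP^1$ (and then $F\sim 2F_1+2F_2=-K$, $k=8$), or the blow-up of $\PP^2$ at $l=9-k\le8$ points in general position (and then $F\sim 3H-\sum_{i=1}^lE_i=-K_S$). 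Since $\mathbb{F}_n$ with $n\ge2$ has non-ample anticanonical class it is excluded, and this is exactly the stated dichotomy.

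The crux is the first step. The numerical identities coming from adjunction are by themselves consistent with many more pencils --- for example pencils of plane quartics through two fixed nodes --- and it is precisely the ``no embedded sphere in a fiber'' hypothesis, recast through the combinatorics of Lefschetz fibers into ampleness of $F$, that rules these out. The remaining two steps are comparatively formal, using only standard facts about elliptic surfaces (a relatively minimal elliptic surface over $\PP^1$ with a singular fiber has $\chi(\O)\ge1$) and del Pezzo surfaces (their classification and the non-ampleness of $-K_{\mathbb{F}_n}$ for $n\ge2$).
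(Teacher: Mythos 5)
Your proposal is correct, and it takes a genuinely different route from the paper. The paper first invokes Lemma~\ref{T:blow-up of rel min is rel min} to see that the blown-up fibration $\tilde f$ is relatively minimal, then imports the \emph{smooth} classification of genus-$1$ Lefschetz fibrations (Kas--Moishezon) to identify $\tilde S$ with $\PP^2\sharp 9\overline{\PP^2}$ up to diffeomorphism, and appeals to Friedman--Qin to conclude that $S$ is rational; the divisor class of $F$ is then pinned down by writing $F\sim aE_\infty+bC-\sum c_i\hat E_i$ on a blown-up Hirzebruch surface and solving the resulting Diophantine constraints (nonnegativity of a discriminant plus Cauchy--Schwarz). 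You replace both of these inputs by classical surface theory: the ``no embedded sphere'' hypothesis is used directly to show every fiber is irreducible (one node, connected, arithmetic genus $1$, so a reducible fiber would contain a smooth rational component), whence $F$ is ample by Proposition~\ref{T:properties holLP}(3) and Nakai--Moishezon; rationality follows from $\kappa(S)=-\infty$ together with $\chi(\O_{\tilde S})=e(\tilde S)/12\geq 1$ for the relatively minimal elliptic surface $\tilde S$; and the endgame $F\sim -K_S$ via Riemann--Roch reduces everything to the classification of del Pezzo surfaces, which is cleaner than the paper's numerical case analysis and explains \emph{why} the answer is what it is ($F$ is forced to be anticanonical). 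Both proofs lean on Proposition~\ref{T:properties holLP}(3) at the crucial point, but you use the sphere hypothesis more directly than the paper does. Two small points you should tighten: the claim that $\tilde f$ must have a singular fiber deserves a line of justification (isotriviality plus the section reduces the structure group to the finite group $\operatorname{Aut}(E,0)$, so triviality over the simply connected base gives $E\times\PP^1$, which is minimal and so cannot contain the exceptional $(-1)$-sphere), and the Hodge-index remark $(K_S+F)^2\leq 0$ is never actually used --- effectivity of $K_S+F$ together with $(K_S+F)\cdot F=0$ and ampleness of $F$ already forces $K_S+F=0$.
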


\begin{remark} \label{rem:nosphere}

This remark concerns the assumption that no fibers of a pencil contain an embedded sphere, which is required in not only the theorem above but also the main theorems in the paper.  
Even if a Lefschetz pencil is relatively minimal, a fiber of it might contain an embedded sphere. 
For example, let us consider the Lefschetz pencil $f_{g,k}:X_{g,k}\dashrightarrow \PP^1$ with the following monodromy factorization:
\begin{equation}\label{Eq:monodromy fact trivial}
t_{\delta_1}\cdots t_{\delta_k} = t_{\delta_1}\cdots t_{\delta_k}\mbox{ in }\MCG(\Sigma_g^k). 
\end{equation}
The total space $X_{g,k}$ is a ruled surface and the pencil $f_{g,k}$ has $k$ critical (resp.~base) points corresponding to the twists in the left-hand (resp.~right-hand) side of \eqref{Eq:monodromy fact trivial}. 
This pencil is relatively minimal but each singular fiber of it contains a sphere. 
Furthermore, there exist other types of such Lefschetz pencils with genus-$0$: the pencils of degree $1$ and $2$ curves in $\PP^2$. 
The former (resp.~the latter) gives rise to the trivial relation $1=t_\delta$ in $\MCG(D^2)$ (resp.~the lantern relation) as the monodromy factorization. 
Such pencils, however, are not important in the context of classification; if a (not necessarily holomorphic) relatively minimal Lefschetz pencil has an embedded sphere in a fiber, it is isomorphic to one of the examples given here.
This follows from the observation in \cite[Remark 2.4]{PlamenevskayaVHMorris} for genus-$0$ and the lemma below for higher genera. 

\end{remark}

\begin{lemma}\label{T:blow-up of rel min is rel min}

Let $f:X\dashrightarrow \PP^1$ be a relatively minimal Lefschetz pencil with genus-$g\geq 1$.
Suppose that there exists an embedded sphere in a fiber of $f$.
Then a monodromy factorization of $f$ is $t_{\delta_1}\cdots t_{\delta_k} = t_{\delta_1}\cdots t_{\delta_k}$. 

\end{lemma}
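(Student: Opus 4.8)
The goal is to show that a relatively minimal genus-$g$ Lefschetz pencil $f$ with $g\ge 1$ that has an embedded sphere in some fiber must have the trivial monodromy factorization $t_{\delta_1}\cdots t_{\delta_k}=t_{\delta_1}\cdots t_{\delta_k}$, i.e.\ no vanishing cycles at all. My strategy is to reason on the blown-up fibration $\tilde f:\tilde X\to\PP^1$ obtained by resolving all the base points, locate the embedded sphere inside it, and use the relative-minimality hypothesis together with a homological/intersection argument to force the sphere to be a section-like object intersecting every fiber once, which then collapses the fiber genus unless there are no critical points.

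First I would set up the blow-up: let $\pi:\tilde X\to X$ be the blow-up at the $k$ base points, with exceptional spheres $E_1,\dots,E_k$, so that $f$ lifts to a genuine Lefschetz fibration $\tilde f:\tilde X\to\PP^1$ in which each $E_i$ is a $(-1)$-section. Let $\Sigma\subset X$ be the embedded sphere contained in a fiber $F$ of $f$; its proper transform $\tilde\Sigma\subset\tilde X$ lies in the corresponding fiber $\tilde F$ of $\tilde f$. Since $\tilde F$ is a fiber of a genus-$g$ Lefschetz fibration, it is either a smooth genus-$g$ surface or a singular fiber which is a union of irreducible components obtained from a smooth genus-$g$ surface by collapsing disjoint vanishing cycles; in particular any embedded sphere component $\tilde\Sigma$ of $\tilde F$ is one of these components, and by relative minimality of $f$ (hence of $\tilde f$ away from the $E_i$) the self-intersection $\tilde\Sigma^2$ cannot be $-1$ unless $\tilde\Sigma$ is one of the exceptional spheres $E_i$. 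The key point is then to pin down $\tilde\Sigma^2$: a sphere sitting inside a fiber of a Lefschetz fibration with a single node (locally $z^2+w^2$) appears as a component of a reducible fiber $\tilde F=\tilde\Sigma\cup\tilde\Sigma'$ with $\tilde\Sigma\cdot\tilde\Sigma'=1$ and $\tilde\Sigma^2+\tilde\Sigma^2{}'$ determined by $\tilde F^2=0$, or more globally $\tilde\Sigma$ may be a section of $\tilde f$. I would analyze these cases using the adjunction-type constraint that a genus-$0$ component in a singular fiber of a genus-$g\ge1$ fibration can arise only by splitting off a separating or non-separating curve.

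The heart of the argument is the following dichotomy. Either (a) $\tilde\Sigma$ meets the generic fiber trivially, i.e.\ it is a vertical sphere properly contained in a singular fiber, in which case relative minimality of $f$ forces $\tilde\Sigma$ to be a blown-up base sphere or leads to a contradiction with $g\ge1$ after tracking how many vanishing cycles would be needed to create it; or (b) $\tilde\Sigma$ is horizontal, meaning it is actually a section $S$ of $\tilde f$ with $S^2\le -1$. In case (b), $S$ together with the $(-1)$-sections $E_i$ shows $\tilde X$ fibers over $\PP^1$ with a sphere section; by the section–fiber intersection $S\cdot\tilde F=1$ one sees the generic fiber has a point from $S$ and, running the argument of Kas–Moishezon or directly the observation that a Lefschetz fibration admitting a sphere of non-positive square transverse to fibers is a blow-up of a ruled surface, one concludes $\tilde f$ has no critical points, hence $f$ has none, giving the trivial factorization. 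I expect the main obstacle to be making case (a) airtight: ruling out the possibility that a fiber genuinely degenerates so that a sphere bubbles off while $g\ge1$ is preserved — this requires carefully using that the vanishing cycles producing such a configuration would have to be null-homotopic or bound a disk, contradicting that Lefschetz vanishing cycles are essential, and then arguing that a fiber containing an embedded sphere with $\tilde\Sigma^2\ne -1$ cannot occur in a relatively minimal genus-$g\ge1$ fibration unless the whole fibration is trivial. Once that is established, every singular fiber is excluded, the set of critical points is empty, and the monodromy factorization reads $t_{\delta_1}\cdots t_{\delta_k}=t_{\delta_1}\cdots t_{\delta_k}$, as claimed.
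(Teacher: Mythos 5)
Your proposal starts from a misreading of the conclusion. The lemma does not assert that $f$ has no critical points: the factorization $t_{\delta_1}\cdots t_{\delta_k}=t_{\delta_1}\cdots t_{\delta_k}$ means that $f$ has exactly $k$ critical points whose vanishing cycles are the boundary-parallel curves $\delta_1,\ldots,\delta_k$ (these are precisely the ruled-surface pencils $f_{g,k}$ of the paper's remark on inessential pencils, and each of their singular fibers does contain a sphere). A pencil with empty critical set would instead have factorization $1=t_{\delta_1}\cdots t_{\delta_k}$, which is impossible in $\MCG(\Sigma_g^k)$. So the endpoint you aim for --- ``the set of critical points is empty'' --- is not the statement to be proved, and an argument reaching it would prove something false.

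Beyond that, the case analysis does not function. A sphere embedded in a fiber is never a section, so your case (b) is vacuous under the hypothesis, and everything rests on case (a), which you leave open. The mechanism you propose for closing it --- that the vanishing cycles creating such a sphere ``would have to be null-homotopic\,\dots\,contradicting that Lefschetz vanishing cycles are essential'' --- is false here: vanishing cycles of a pencil need only be simple closed curves in $\Sigma_g^k$ and may well be inessential in the capped-off closed surface; in the pencils at issue they are exactly the boundary-parallel curves. What the hypothesis actually yields is one vanishing cycle $c_1$ bounding a genus-$0$ subsurface $S\subset\Sigma_g^k$, which by relative minimality must contain a boundary component. The paper then proceeds in three steps absent from your outline: (i) cap all boundary components but one in $S$ to obtain $t_{c_m}\cdots t_{c_2}=1$ in $\MCG(\Sigma_g^1)$ and invoke the fact that a nontrivial relatively minimal genus-$g$ Lefschetz fibration admits no square-zero section (Smith, Stipsicz), forcing every $c_i$ to bound a genus-$0$ subsurface $S_i$; (ii) rule out $S_i$ containing two or more boundary components via McDuff's classification of symplectic $4$-manifolds containing an embedded symplectic sphere of positive square, using $\pi_1(X)\cong\pi_1(\Sigma_g)$ with $g\geq 1$ to exclude the rational case; (iii) conclude each $c_i$ is isotopic to some $\delta_j$ and pin down the factorization from the fact that $t_{\delta_1},\ldots,t_{\delta_k}$ generate a copy of $\Z^k$. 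Without these ingredients the argument does not close.
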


\begin{proof}[Proof of \cref{T:blow-up of rel min is rel min}]
Let $m$ and $k$ be the numbers of critical points and base points of $f$, respectively, and $t_{c_m}\cdots t_{c_1} = t_{\delta_1}\cdots t_{\delta_k}$ be a monodromy factorization of $f$, where $c_1,\ldots, c_m\subset \Sigma_g^k$ be simple closed curves in $\Sigma_g^k$.  
By capping the boundary of $\Sigma_g^k$ by disks, we can regard $\Sigma_g^k$ as a subsurface of the closed surface $\Sigma_g$. 
By the assumption, one of the vanishing cycles of $f$, say $c_1$, is not essential in $\Sigma_g$. 
Let $S$ be the closure of the genus-$0$ component of the complement $\Sigma_g^k\setminus c_1$. 
Since $f$ is relatively minimal, $S$ contains a boundary component of $\Sigma_g^k$. 
By capping all the boundary components of $\Sigma_g^k$ except for one in $S$, we obtain the following relation in $\MCG(\Sigma_g^1)$: 
\[
t_{\delta}\cdot t_{c_2} \cdots t_{c_m} = t_{\delta} \Rightarrow t_{c_2}\cdots t_{c_m} = 1
\]
If one of the curves $c_2,\ldots, c_m$ is essential in $\Sigma_g^1$, the equality above implies that there exists a relatively minimal non-trivial genus-$g$ Lefschetz fibration with a square-zero section, contradicting \cite[Proposition 3.3]{SmithGeometricMonodromy} and \cite[Lemma 2.1]{StipsiczIndecomposability}. 
Thus, all the curves $c_1,\ldots, c_m$ bounds a genus-$0$ subsurface in $\Sigma_g^k$. 
We can also deduce from the observation above that the fundamental group of $X$ is isomorphic to that of $\Sigma_g$. 

Let $S_i$ be the genus-$0$ component of $\Sigma_g^k\setminus c_i$. 
Suppose that $S_i$ contains more than one components of $\Pa \Sigma_g^k$ for some $i$. 
Then $X$ has a symplectic structure such that there exists a embedded symplectic sphere $C$ with positive square. 
Since $X$ is not rational, one can verify in the same way as that in the proof of \cite[Theorem 1.4 (ii)]{McDuffImmersedSphere} that $X$ is a irrational ruled surface, $C$ is away from a maximal disjoint family of exceptional spheres, and after blow-down $C$ becomes a fiber of a $\PP^1$-bundle. 
However, this contradicts that $C$ has positive square. 
We can eventually show that $S_i$ contains only one component of $\Pa \Sigma_g^k$ for each $i=1,\ldots, m$, in particular each $c_i$ is isotopic to some $\delta_j$. 
The lemma then follows from the fact that the subgroup of $\MCG(\Sigma_g^k)$ generated by $t_{\delta_1},\ldots,t_{\delta_k}$ is isomorphic to the free abelian group $\Z^k$. 
\end{proof}

\begin{proof}[Proof of \cref{T:classification divisor fibers}]
Let $f:S\dashrightarrow \PP^1$ be a genus-$1$ holomorphic Lefschetz pencil and $\tilde{f}:\tilde{S}\to \PP^1$ be the Lefschetz fibration obtained by blowing-up all the base points of $f$. 
By \cref{T:blow-up of rel min is rel min} and the assumption, $\tilde{f}$ is relatively minimal. 
We can deduce from the classification of genus-$1$ Lefschetz fibrations in the smooth category (given in \cite{Moishezon_classification_genus1LF}) that $\tilde{S}$ is diffeomorphic to $\PP^2\sharp 9\overline{\PP^2}$. 
Thus, applying \cite[Corollary 2]{FriedmanQin_Kodairadim}, we can show that $S$ is a rational surface, in particular $S$ is either $\PP^2$ or a blow-up of the Hirzebruch surface $S_n$ for some $n\geq 0$ (for the definition of $S_n$, see \cite[Chap.~4, \S.3]{GH}). 
Suppose that $S$ is the projective plane.
Then the number of the base points of $f$ is equal to $9$, and thus the self-intersection of $F$ is also equal to $9$.
Since the line bundle $[F]$ has at least two linearly independent sections by (2) of \cref{T:properties holLP}, $F$ is linearly equivalent to $a H$ for some $a >0$ (note that the linear equivalence class of a divisor of a simply connected K\"ahler manifold is uniquely determined by the corresponding second cohomology class).
The self-intersection of $a H$ is equal to $a^2$. 
Hence we can conclude that $F$ is linearly equivalent to $3H$. 

In the rest of the proof we assume that $S$ can be obtained by blowing-up the Hirzebruch surface $S_n$ ($n\geq 0$) $l'$ times ($0\leq l' \leq 7$).
Let $E_\infty'\subset S_n$ be a section of $S_n$ (as a $\PP^1$-bundle) with self-intersection $-n$ (which is unique when $n>0$), and $C'\subset S_n$ be a fiber of the same $\PP^1$-bundle on $S_n$. 
We denote the total transforms of $E_\infty'$ and $C'$ by $E_\infty$ and $C$, respectively. 
Let $\hat{E}_i\subset S$ be the total transform of the exceptional sphere appearing in the $i$-th blow-up of $S_n$. 
Since $S$ is simply connected and K\"ahler, we can assume that $F$ is linearly equivalent to the following divisor: 
\begin{equation*}
a E_\infty +b C -\sum_{i=1}^{l'} c_i \hat{E}_i \hspace{1em} (a,b,c_i \in \Z). 
\end{equation*}
All the components of $E_\infty, C$ and $\hat{E}_i$ are spheres. 
Since no fiber of $f$ contains a sphere, we can deduce the following inequality from (3) of \cref{T:properties holLP}:
\begin{equation}\label{Eq:ineq from nef}
a >0,\hspace{.5em}b > na\mbox{, and }c_i>0 \mbox{ ($i=1,\ldots,l'$)}. 
\end{equation}
Since the number of base points of $f$ is equal to the self-intersection of $F$, we obtain the following equality: 
\begin{equation}\label{Eq:condition base}
8-l' = -na^2 +2ab -\sum_{i=1}^{l'} c_i^2. 
\end{equation}
The canonical class of $S_n$ is represented by the divisor $-2 E_\infty - (2+n)C$ (see \cite[Chap.~4, \S.3]{GH}). 
Thus we can deduce the following equality from (1) of \cref{T:properties holLP}:
\begin{equation}\label{Eq:condition genus}
8-l'+(n-2)a-2b+\sum_{i=1}^{l'}c_i = 0 \Leftrightarrow b =\frac{1}{2}\left(8-l'+(n-2)a+\sum_{i=1}^{l'}c_i\right).
\end{equation}
Combining the equalities \eqref{Eq:condition base} and \eqref{Eq:condition genus}, we obtain:
{\allowdisplaybreaks
\begin{align}
\nonumber&-na^2 +a \left((n-2)a+8-l'+\sum_{i=1}^{l'}c_i\right) -\sum_{i=1}^{l'}c_i^2-(8-l') =0\\
\label{Eq:quad equation on a}\Leftrightarrow& -2a^2+\left(8-l'+\sum_{i=1}^{l'}c_i\right)a-\sum_{i=1}^{l'}c_i^2-(8-l') =0.
\end{align}
}
We can regard \eqref{Eq:quad equation on a} as a quadratic equation on $a$, whose discriminant must be non-negative. 
Thus the following inequality holds: 
{\allowdisplaybreaks
\begin{align}
\nonumber&\left(8-l'+\sum_{i=1}^{l'}c_i\right)^2-8\left(\sum_{i=1}^{l'}c_i^2+(8-l')\right)\geq 0\\
\label{Eq:discriminant a}\Leftrightarrow& 
\left(\sum_{i=1}^{l'}c_i\right)^2+2(8-l')\sum_{i=1}^{l'}c_i-8\sum_{i=1}^{l'}c_i^2-l(8-l')\geq0.
\end{align}
}
Applying the Cauchy-Schwarz inequality to the vectors $\left(\sum_{i=1}^{l'}c_i,\ldots,\sum_{i=1}^{l'}c_i\right)$ and $(c_1,\ldots,c_{l'})$, we obtain the following inequality: 
\begin{equation}\label{Eq:norm vs inner product}
\left(\sum_{i=1}^{l'}c_i\right)^2 \leq \sqrt{l'}\left(\sum_{i=1}^{l'}c_i\right)\cdot \sqrt{\sum_{i=1}^{l'}c_i^2} \Rightarrow \sum_{i=1}^{l'}c_i \leq \sqrt{l'\sum_{i=1}^{l'}c_i^2}.
\end{equation}
Combining the inequalities \eqref{Eq:discriminant a} and \eqref{Eq:norm vs inner product}, we eventually obtain: 
{\allowdisplaybreaks
\begin{align*}
\nonumber& 
l'\sum_{i=1}^{l'}c_i^2+2\sqrt{l'}(8-l')\sqrt{\sum_{i=1}^{l'}c_i^2}-8\sum_{i=1}^{l'}c_i^2-l'(8-l')\geq0\\
\Rightarrow & -(8-l')\left(\sqrt{\sum_{i=1}^{l'}c_i^2}-\sqrt{l'}\right)^2\geq0. 
\end{align*}
}
Since $l'$ is less than $8$, we can deduce from this inequality that the sum $\sum_{i=1}^{l'}c_i^2$ is equal to $l'$. 
This equality together with the inequality in \eqref{Eq:ineq from nef} implies that $c_1,\ldots, c_{l'}$ are all equal to $1$. 
By substituting $1$ for all the $c_i$'s in \eqref{Eq:quad equation on a}, we obtain:
\[
-2a^2 +8a -8 =0 \Rightarrow a=2. 
\] 
We can further deduce from \eqref{Eq:condition genus} that $b$ is equal to $n+2$. 
Since $b$ is greater than $na$, $n$ is equal to $0$ or $1$. 

If $n$ is equal to $1$, the complex surface $S$ is a blow-up of $S_1$, which is a blow-up of $\PP^2$ at a single point. 
In other words, there is a sequence of blow-up from $\PP^2$ to $S$:
\begin{equation}\label{Eq:seq blow-up}
S^{(0)}:=\PP^2 \leftarrow S^{(1)} := S_1 \leftarrow S^{(2)} \leftarrow \cdots \leftarrow S^{(l)} =: S\mbox{, where $l=l'+1$}. 
\end{equation}
We denote the exceptional sphere in $\Sigma_1$ by $\hat{E}_0'$.
The divisors $E_\infty'$ and $C'$ are respectively linearly equivalent to $\hat{E}_0'$ and $H'-\hat{E}_0'$. 
Let $\hat{E}_0\subset S$ be the total transform of $\hat{E}_0'$. 
The closure of a fiber $F$ of $f$ is linearly equivalent to $a E_\infty + b C -\sum_{i=1}^{l'} \hat{E}_i = 3H- \sum_{i=1}^{l} E_i$, where $E_i= \hat{E}_{i-1}$.  
Suppose that the $j$-th blow-up in the sequence \eqref{Eq:seq blow-up} is applied at a point on the exceptional sphere appearing in the $i$-th blow-up for some $i<j$. 
Then the divisor $E_i-E_j$ would be linearly equivalent to a positive linear combination of spheres.  
By (3) of \cref{T:properties holLP} the self-intersection $(E_i-E_j)\cdot F$ would be positive, but this is not the case since $F$ is linearly equivalent to $3H- \sum_{i=1}^{l} E_i$.
We can eventually conclude that $S$ is obtained from $\PP^2$ by blowing-up $l$ points. 

Finally, suppose that $n$ is equal to $0$. 
The complex surface $S$ is $S_0=\PP^1\times \PP^1$, and $E_\infty'$ and $C'$ are respectively equal to $F_1$ and $F_2$. 
Since the blow-up of $\PP^1\times \PP^1$ at a single point is biholomorphic to the surface obtained by blowing-up $\PP^2$ at two points, we can assume that $l$ is equal to $0$ without loss of generality. 
The closure of a fiber $F$ is then linearly equivalent to $aF_1+bF_2 = 2F_2+2F_2$. 
This completes the proof of \cref{T:classification divisor fibers}. 
\end{proof}

\begin{proof}[Proof of \cref{T:genus-1 LP is blow-up of N_9 or S_8}]
We first observe that the Veronese embedding $v_3$ and the composition $v_2\circ \sigma$ are embeddings corresponding to the very ample line bundles $[3H]$ and $[2F_1+2F_2]$, respectively. 
Thus, according to \cref{R:rel LP proj}, the corollary holds if $S$ is either $\PP^2$ or $\PP^1\times \PP^1$. 
Suppose that $S$ is obtained by blowing up $\PP^2$ $l$ times. 
We can regard the Lefschetz pencil $f$ as a projective line in the complete linear system $\PP(H^0(S;[F]))$. 
Let $E = \sum_{i=1}^{l} E_i$ be the exceptional divisor, $\pi:S\to \PP^2$ be the blow-down mapping and $s \in H^0(S;[E])$ a non-trivial section. 
We can then define the following linear mapping:
\[
\xi: H^0(S;[F]) \to H^0(S;[F]\otimes [E])=H^0(S;\pi^\ast[3H]), \hspace{.5em} \xi(\tau) = \tau \otimes s. 
\]
Since the blow-down mapping $\pi$ is birational, we can identify $H^0(S;[F]\otimes [E])$ with $H^0(\PP^2;[3H])$ via $\pi$. 
Under this identification, the image $\xi(f)$ is a genus-$1$ Lefschetz pencil defined on $\PP^2$, which is smoothly isomorphic to $f_n$ by \cref{R:rel LP proj}, and $f$ can be obtained by blowing-up $\xi(f)$.  
\end{proof}

\section{Vanishing cycles of genus-$1$ Lefschetz pencils}\label{sec:van cyc LP}

As we have shown, any genus-$1$ Lefschetz pencil can be obtained by blowing-up either of the pencils $f_n$ or $f_s$ in \cref{T:genus-1 LP is blow-up of N_9 or S_8}. 
In this section we will determine vanishing cycles of these pencils relying on the theory of braid monodromies due to Moishezon and Teicher.
Throughout this section, we denote the projective varieties $v_3(\PP^2)$ and $v_2\circ \sigma(\PP^1\times \PP^1)$ by $U_n$ and $U_s$, respectively.

\subsection{Braid monodromy techniques}

In this subsection, we will give a brief review on the theory of braid monodromies. 
We will first explain how the theory is related with vanishing cycles of Lefschetz pencils appearing as generic pencils of very ample line bundles, and then recall several facts we need to obtain monodromies of $f_n$ and $f_s$. 
The reader can refer to \cite{MTI,MTII,MTIII,MTIV} for more details on this subject.

Let $V\subset \PP^n$ be a non-singular projective surface. 
Restricting a generic projection $\PP^n\dashrightarrow \PP^2$, we obtain a regular mapping $\pi: V\to \PP^2$ whose critical value set $C$ is a curve with nodes and cusps. 
We further take a generic projection $\pi':\PP^2\dashrightarrow \PP^1$ with base point $p_0\in \PP^2$ so that the composition $f:=\pi'\circ \pi:V\dashrightarrow \PP^1$ is a Lefschetz pencil. 
The critical point set of $f$ is equal to the set of critical points of $\pi$ whose image by $\pi$ is a branch point of the restriction $\pi'|_C$. 
We can obtain the monodromy (or equivalently, vanishing cycles) of $f$ from the \emph{braid monodromy} of $C$ (around branch points of $\pi'|_C$) explained below. 

Let $Q:=\{q_1,\ldots,q_m\}\subset \PP^1$ be the set of images (by $\pi'$) of branch points of $\pi'|_C$.  
Take a reference point $q_0\in \PP^1\setminus Q$. 
The closure of the preimage $\overline{\pi'^{-1}(q_0)}$ (which is equal to $\pi'^{-1}(q_0)\cup \{p_0\}$) is a line in $\PP^2$ intersecting $C$ at $d := \deg C$ points. 
We take $d+1$ points $v_0,v_1,\ldots,v_d \in S^2$ and fix an identification of the triple $(\overline{\pi'^{-1}(q_0)},\overline{\pi'^{-1}(q_0)}\cap C,\{p_0\})$ with $(S^2,\{v_1,\ldots,v_d\},\{v_0\})$. 
Note that we can also identify the restriction $\pi|_{\overline{f^{-1}(q_0)}}:\overline{f^{-1}(q_0)}\to \overline{\pi'^{-1}(q_0)}$ with a simple branched covering $\theta:\Sigma\to S^2$ branched at $v_1,\ldots, v_d$ (where a simple branched covering is a branched covering such that all the branched points have degree $2$). 
We next take a Hurwitz path system $(\alpha_1,\ldots,\alpha_m)$ of $f$ with the base point $q_0$, and the corresponding loops $\gamma_i$ for $i=1,\ldots,m$ as we took in \cref{S:monodromy factorization}. 
Taking the isotopy class of a parallel transport along $\gamma_i$ preserving $C$, we obtain a sequence of elements $\tau_1,\ldots, \tau_m$ of the braid group $B_d$ defined as follows: 
\[
B_d := \pi_0(\Diff(S^2,\{v_1,\ldots, v_d\},v_0)), 
\]
where we denote by $\Diff(S^2,\{v_1,\ldots, v_d\},v_0)$ the group of orientation-preserving self-diffeomorphisms of $S^2$ preserving $v_0$ and the set $\{v_1,\ldots,v_d\}$.  
It is easy to see that each element $\tau_i$ is a half twist along some path $\beta_i\subset S^2$ between two points in $\{v_1,\ldots, v_d\}$.
The path $\beta_i$ is called a \emph{Lefschetz vanishing cycle} of the corresponding branched point of $\pi'|_C$.   
The preimage $\theta^{-1}(\beta_i)$ has the unique circle component $c_i\subset \Sigma$, and this circle is a vanishing cycle of a Lefschetz singularity of $f$ in $f^{-1}(q_i)$ with respect to the path $\alpha_i$.

\begin{remark}

In the series of papers of Moishzon-Teicher, a Lefschetz vanishing cycle and a braid monodromy are defined not only for branched points of the restriction of a projection on the critical value set, but also for multiple points and cusps of a general curve in $\PP^2$. 
The reader can refer to \cite{MTII}, for example, for details of this subject. 
Note that we will deal with braid monodromies of multiple points (which is a Dehn twist along some simple closed curve in a punctured sphere) in order to determine vanishing cycles of $f_n$ and $f_s$. 

\end{remark}

\begin{remark}

Although the product $t_{c_m} \cdots t_{c_1}$ in $\MCG(\overline{f^{-1}(q_0)})$ is equal to the unit, the product $\tau_m \cdots \tau_1$ is \emph{not} equal to the unit since we do not consider braid monodromies of nodes and cusps of the critical value set $C$. 

\end{remark}

In summary, we can get vanishing cycles of the Lefschetz pencils $f_n$ and $f_s$ in \cref{T:genus-1 LP is blow-up of N_9 or S_8} once we obtain Lefschetz vanishing cycles of the branch points of the critical value sets of generic projections from $U_n$ and $U_s$ to $\PP^2$ (and the monodromies of simple branched coverings defined over a line in $\PP^2$, which can be obtained easily in our situations). 
Moishezon and Teicher \cite{MTIV} have obtained the Lefschetz vanishing cycles for $U_n$ by giving a \emph{projective degeneration} of $U_n$ to a union of planes, and then analyzing how Lefschetz vanishing cycles are changed in the regeneration (the opposite deformation of the degeneration).  
As we will observe below, the Lefschetz vanishing cycles for $U_s$ can also be obtained in the same way. 
In what follows, we will review the definition of a projective degeneration and those for $U_n$ and $U_s$ given in \cite{MTIII} and \cite{MRT}, respectively. 

An algebraic set $U_0\subset \PP^{n_0}$ is said to be \emph{equivalent} to another algebraic set $U_1\subset \PP^{n_1}$ if there exist an algebraic set $W\subset \PP^N$ and projections $\pi_0:\PP^N\dashrightarrow \PP^{n_0}$ and $\pi_1:\PP^N\dashrightarrow \PP^{n_1}$ such that the restriction $\pi_i|_W:W\to U_i$ is an isomorphism for $i=0,1$. 
An algebraic set $U'\subset \PP^m$ is a \emph{projective degeneration} of $U\subset \PP^n$ if there exists an algebraic set $W\subset \PP^N\times \C$ 
such that $W\cap (\PP^N\times \{0\})$ is equivalent to $U'$ and $W\cap (\PP^N\times \{\varepsilon\})$ is equivalent to $U$ for any $\varepsilon$ with sufficiently small $|\varepsilon|>0$. 
In this paper, we mean by $U\rightsquigarrow U'$ that $U'$ is the result of a projective degenerations from $U$.
Following the notations in \cite{MTIII}, we will describe components of algebraic sets as follows: 

\begin{itemize}

\item 
A surface equivalent to the image of the Veronese embedding of degree $d$ on $\PP^2$ is denoted by $V_d$, and described by a triangle in figures.  

\item 
A surface equivalent to the image of the embedding $\varphi_{[E_\infty + lC]}$ on $S_1$ ($l>1$) is denoted by $T_l$, and described by a trapezoid in figures. 

\item 
A surface equivalent to the image of the embedding $\varphi_{[a F_1+b F_2]}$ on $\PP^1\times \PP^1$ ($a,b >0$) is denoted by $U_{a,b}$, and described by a square in figures. 

\end{itemize}

\begin{theorem}[\cite{MTIII}. A projective degeneration of $U_n$.]\label{T:proj degen U_n}

There exists a sequence of projective degenerations $U_n =:Y^{(0)}\rightsquigarrow Y^{(1)} \rightsquigarrow \cdots \rightsquigarrow Y^{(5)}$ from $U_n$ to a union of $9$ planes $Y^{(5)}$. 
The intermediate algebraic sets are described in \cref{F:proj degen U_n}. 

\begin{figure}[htbp]
\centering
\subfigure[$Y^{(0)}$.]{
\includegraphics[width=25mm]{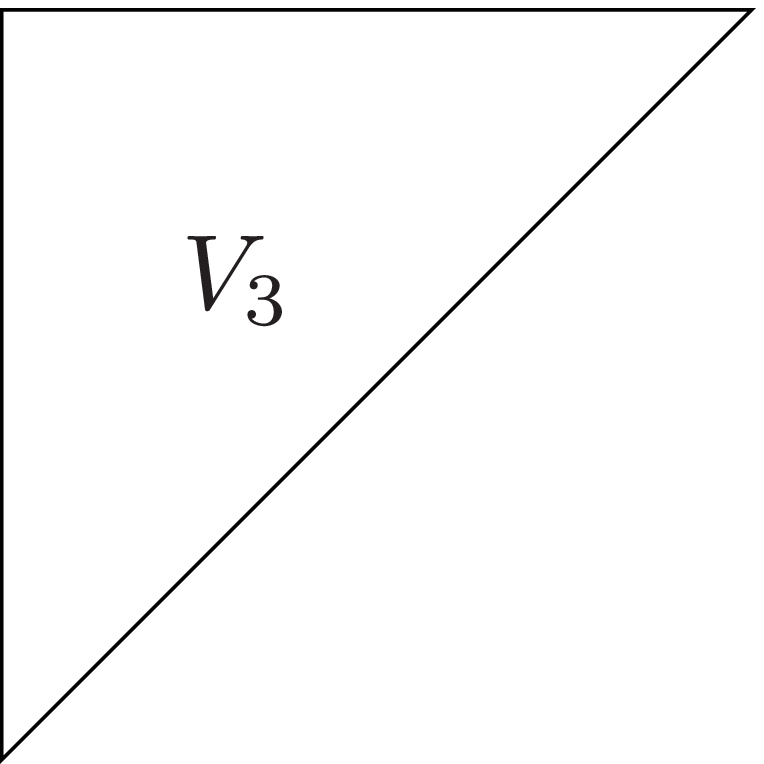}\label{F:variety Y0}}
\hspace{.4em}\subfigure[$Y^{(1)}$.]{
\includegraphics[width=25mm]{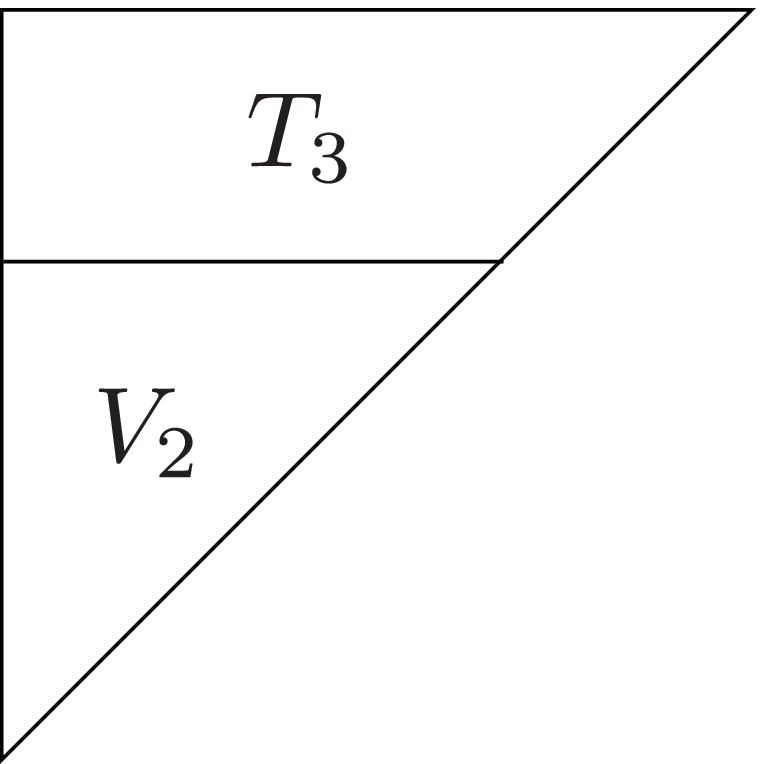}\label{F:variety Y1}}
\hspace{.4em}\subfigure[$Y^{(2)}$.]{
\includegraphics[width=25mm]{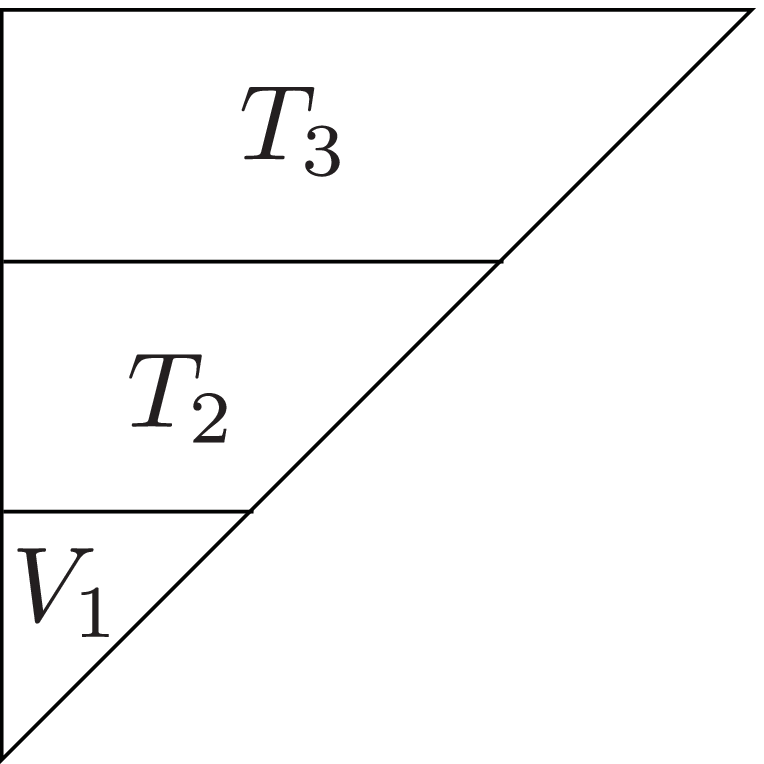}\label{F:variety Y2}}

\subfigure[$Y^{(3)}$.]{
\includegraphics[width=25mm]{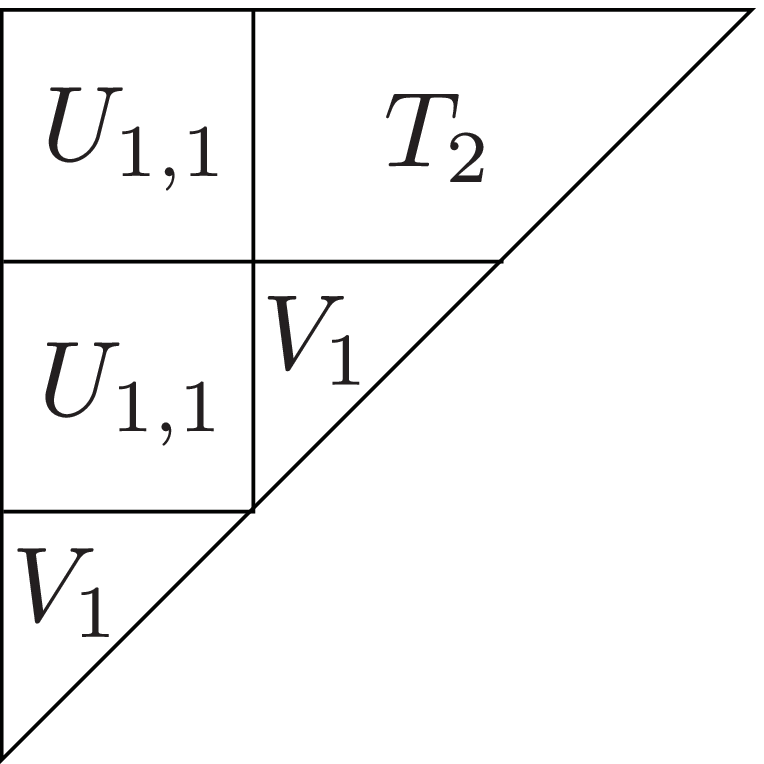}\label{F:variety Y3}}
\hspace{.4em}\subfigure[$Y^{(4)}$.]{
\includegraphics[width=25mm]{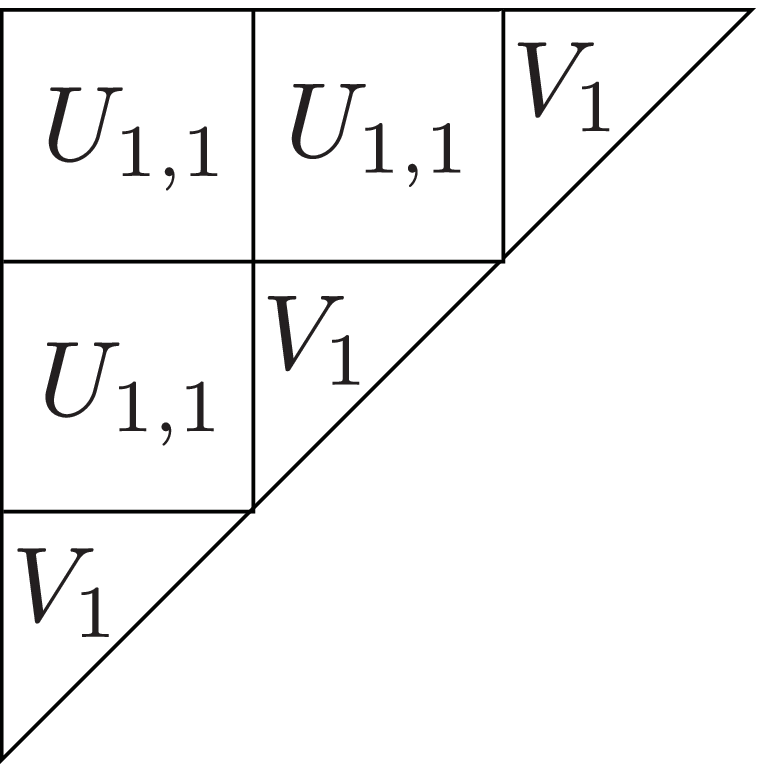}\label{F:variety Y4}}
\hspace{.4em}\subfigure[$Y^{(5)}$.]{
\includegraphics[width=25mm]{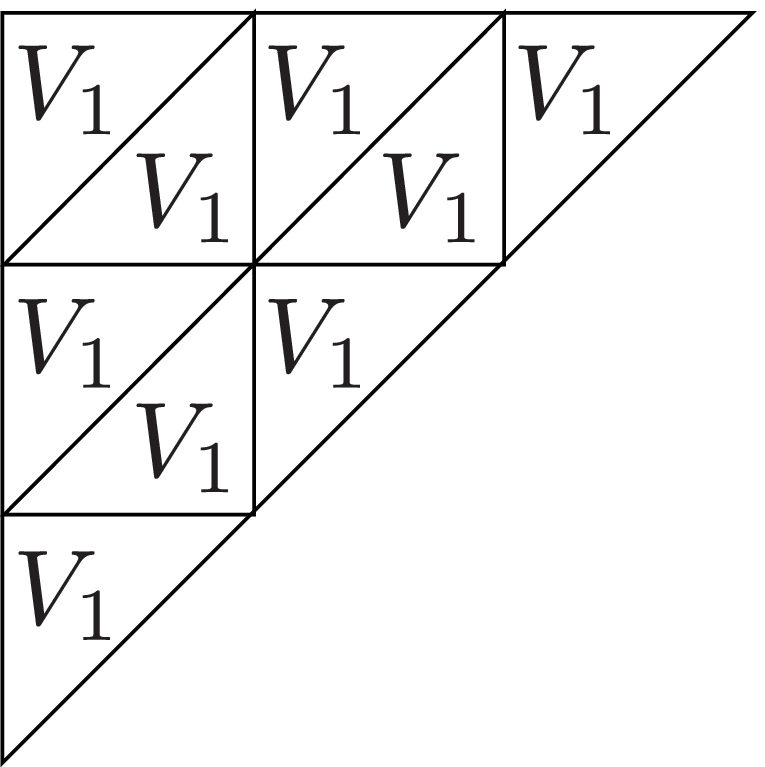}\label{F:variety Y5}}
\caption{A sequence of projective degenerations of $U_n$.}
\label{F:proj degen U_n}
\end{figure}

\end{theorem}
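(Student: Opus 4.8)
The plan is to recall the explicit chain of projective degenerations constructed by Moishezon and Teicher in \cite{MTIII} and to verify that each link of the chain conforms to the definition of projective degeneration fixed above; no essentially new idea is needed, the content being a careful bookkeeping of linear systems. In the notation introduced just before the statement, $U_n=v_3(\PP^2)$ is the surface $V_3$, depicted by a triangle of side $3$; the terminal object $Y^{(5)}$ is the union of the $9$ unit triangles of the standard unimodular triangulation of that triangle, i.e.\ a union of $9$ planes in $\PP^9$ meeting pairwise along lines and forming a simplicial configuration. The intermediate objects $Y^{(1)},\dots,Y^{(4)}$ of \cref{F:proj degen U_n} are the reducible surfaces whose irreducible components are the standard pieces $V_d$, $T_l$, $U_{a,b}$, glued along the walls of the coarser regular subdivisions of the size-$3$ triangle that interpolate between the trivial subdivision and the full triangulation.

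The core of the argument is the toric degeneration attached to a single wall. First I would record the elementary moves: cutting a lattice polygon along a lattice segment of length $1$ into two sub-polygons yields a one-parameter family in some $\PP^N\times\C$ whose special fibre is the union of the two corresponding toric surfaces glued transversally along the $\PP^1$ dual to the wall, and whose general fibre is the original toric surface re-embedded by a system that is \emph{equivalent}, in the sense above, to the original very ample one; concretely one uses a convex piecewise-linear function on the polygon having the chosen segment as its only break locus and takes the family cut out by its graph. Applied to the pieces at hand this produces moves such as splitting a $V_d$ into a $V_{d-1}$ and a trapezoidal $T$-piece, splitting a $T_l$ into a $U_{a,b}$ and a shorter $T$-piece (or into triangles when it is small enough), and splitting a $U_{a,b}$ into $U_{a-1,b}\cup U_{1,b}$. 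Composing these along the sequence of regular subdivisions displayed in \cref{F:proj degen U_n} gives the desired chain $Y^{(0)}\rightsquigarrow Y^{(1)}\rightsquigarrow\dots\rightsquigarrow Y^{(5)}$; the transversality of every wall (lattice length $1$) guarantees that all central fibres are reduced and all double curves are lines, which is the local structure the regeneration step will later rely on.

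The step that genuinely requires care — rather than being purely formal — is the identification: one must check that each $Y^{(i)}$ drawn in \cref{F:proj degen U_n} is indeed equivalent to the union of standard pieces with the incidences shown there, and that at every stage the polarizations restrict compatibly along all double curves so that the next degeneration is legitimate and keeps the fibre reduced. This is exactly the data — which linear system is cut on each component and on each intersection line, with what multiplicities — worked out in \cite[\S2]{MTIII}. The main obstacle is keeping that data consistent across all five steps simultaneously; since this is done in detail there, after setting up the toric framework I would finish by matching our $Y^{(i)}$ with the objects of \cite{MTIII} and invoking their computation, which is why the result is attributed to Moishezon and Teicher.
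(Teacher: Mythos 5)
This theorem is imported: the paper states it with an attribution to \cite{MTIII} and gives no proof of its own, so there is no internal argument to compare yours against. That said, your proposal is a correct and standard way to see the statement. You recast Moishezon--Teicher's explicit one-parameter families as toric degenerations attached to a chain of regular subdivisions of the lattice triangle of side $3$, with the elementary move (splitting along a lattice-length-one wall via a convex piecewise-linear function) producing a reduced central fibre glued transversally along the line dual to the wall and a general fibre equivalent to the original embedding. The dictionary you use matches the paper's conventions ($V_d$ a triangle, $T_l$ a trapezoid, $U_{a,b}$ a rectangle), and the combinatorics of the terminal object are consistent with \cref{F:plane arrangement Un}: the unimodular triangulation has $9$ unit triangles and $9$ interior edges, with six boundary lattice points at which two interior edges meet (the $2$-points) and one interior lattice point at which six triangles meet (the type M $6$-point), i.e.\ $7$ multiple points in all. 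The only place your write-up leans on the literature --- checking that this particular chain of subdivisions reproduces exactly the intermediate surfaces $Y^{(1)},\dots,Y^{(4)}$ of \cref{F:proj degen U_n} with compatible polarizations along the double curves --- is flagged explicitly and deferred to \cite{MTIII}, which is precisely what the paper itself does. What your approach buys over the bare citation is a uniform mechanism (one lemma about walls of regular subdivisions) in place of Moishezon--Teicher's case-by-case algebraic families; what it does not remove is the dependence on their identification of the intermediate stages, so the attribution is unavoidable either way.
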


\begin{theorem}[\cite{MRT}. A projective degeneration of $U_s$.]\label{T:proj degen U_s}

There exists a sequence of projective degenerations $U_s =:Z^{(0)}\rightsquigarrow Z^{(1)} \rightsquigarrow Z^{(2)} \rightsquigarrow Z^{(3)}$ from $U_s$ to a union of $8$ planes $Z^{(3)}$. 
The intermediate algebraic sets are described in \cref{F:proj degen U_s}. 

\begin{figure}[htbp]
\centering
\subfigure[$Z^{(0)}$.]{
\includegraphics[width=25mm]{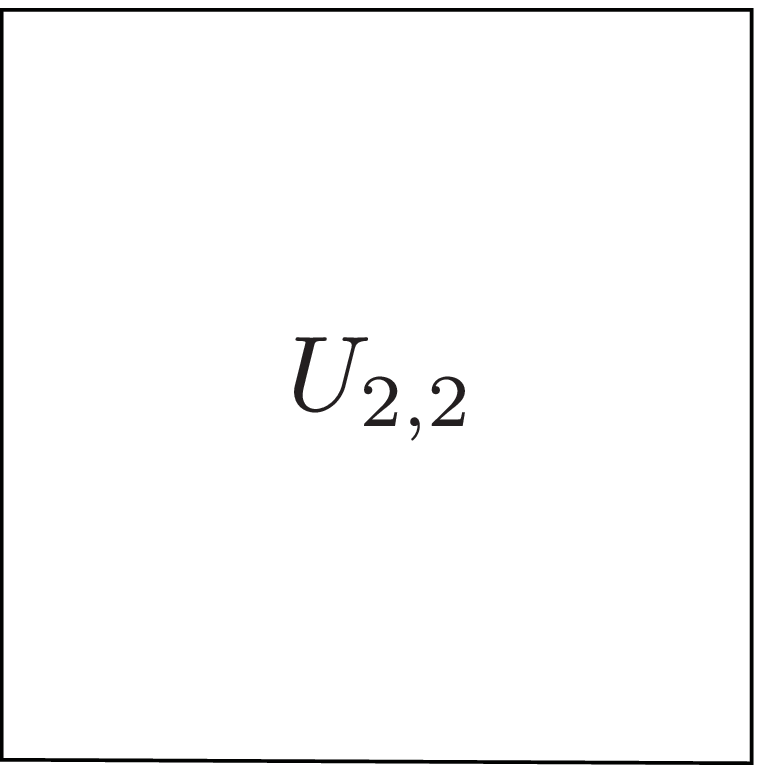}\label{F:variety Z0}}
\hspace{.4em}\subfigure[$Z^{(1)}$.]{
\includegraphics[width=25mm]{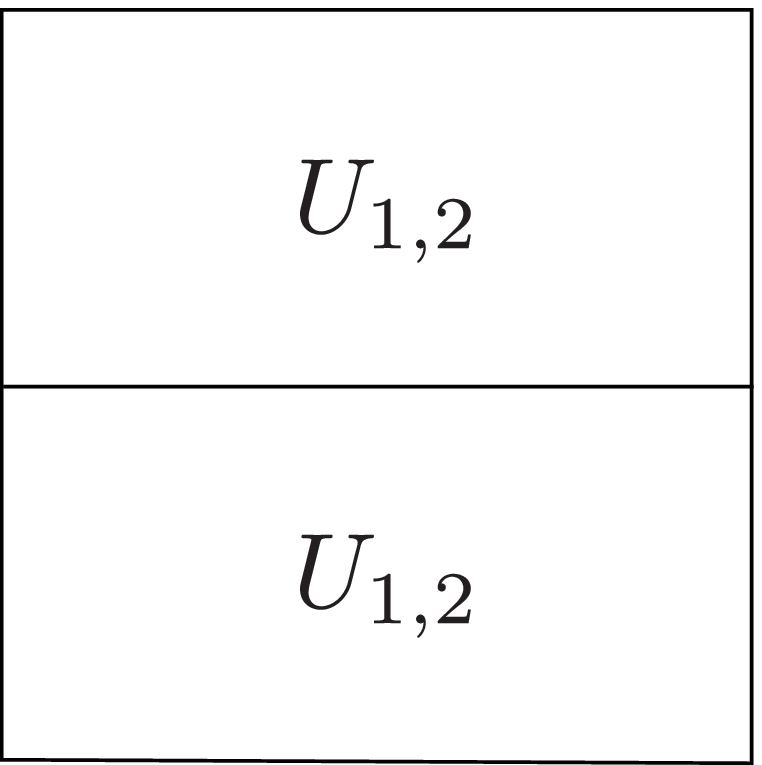}\label{F:variety Z1}}
\hspace{.4em}\subfigure[$Z^{(2)}$.]{
\includegraphics[width=25mm]{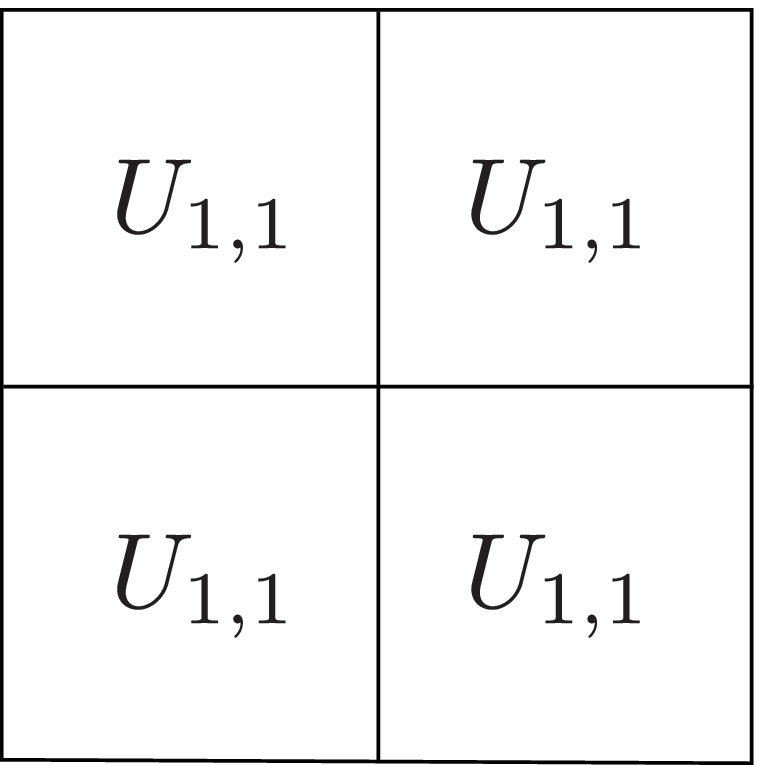}\label{F:variety Z2}}
\hspace{.4em}\subfigure[$Z^{(3)}$.]{
\includegraphics[width=25mm]{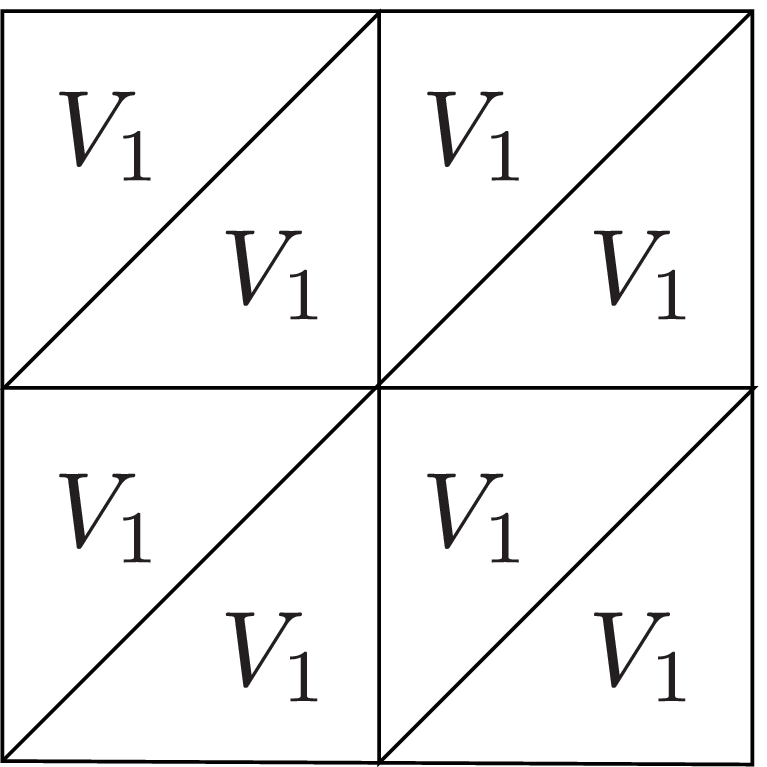}\label{F:variety Z3}}
\caption{A sequence of projective degenerations of $U_s$.}
\label{F:proj degen U_s}

\end{figure}

\end{theorem}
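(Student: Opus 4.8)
The plan is to follow the projective-degeneration method of Moishezon and Teicher, using the product structure of $\PP^1\times\PP^1$ to keep the combinatorics as transparent as possible. Recall from the notation fixed above that $U_{a,b}$ denotes a surface equivalent to $\PP^1\times\PP^1$ embedded by the complete linear system $|\O(aF_1+bF_2)|$; in particular $U_s=U_{2,2}$, a surface of degree $(2F_1+2F_2)^2=8$. The strategy is to degenerate $U_{2,2}$ one factor at a time, halving the degree (and doubling the number of components) at each stage, until one reaches a $2\times 2$ grid of smooth quadrics $U_{1,1}$, and then to degenerate each of those quadrics into a pair of planes. This produces the three projective degenerations $Z^{(0)}\rightsquigarrow Z^{(1)}\rightsquigarrow Z^{(2)}\rightsquigarrow Z^{(3)}$, the last term being a union of $8$ planes, and the intermediate objects together with their double curves and triple points are exactly those depicted in \cref{F:proj degen U_s}.

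In more detail, I would first produce $Z^{(0)}=U_{2,2}\rightsquigarrow Z^{(1)}$ by presenting $|\O(2,2)|$ as the map $([s:t],[u:v])\mapsto([s^2:st:t^2],[u^2:uv:v^2])$ followed by a Segre embedding, and then degenerating the smooth conic $v_2(\PP^1)$ in one of the two $\PP^2$ factors into a pair of lines meeting at a point. The resulting one-parameter family in $\PP^N\times\C$ has general fibre equivalent to $U_{2,2}$ and special fibre a transverse union $U_{1,2}\cup_E U_{1,2}$ of two copies of $U_{1,2}$, each of degree $4$, glued along a conic $E$ (a fibre of one of the two rulings). Applying the analogous degeneration in the remaining direction, simultaneously on both pieces, gives $Z^{(1)}\rightsquigarrow Z^{(2)}$ with $Z^{(2)}$ a union of four smooth quadrics $U_{1,1}\cong\PP^1\times\PP^1$ arranged in a $2\times 2$ grid, meeting pairwise along the grid edges and with triple points at the interior grid vertices. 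Finally, the classical degeneration of a smooth quadric surface in $\PP^3$ into two planes through a common line, carried out simultaneously on all four quadrics, yields $Z^{(2)}\rightsquigarrow Z^{(3)}$ with $Z^{(3)}$ the union of $8$ planes. Concatenating the three families gives the asserted sequence, and the combinatorial data of each special fibre can be read off directly from the construction and matched against \cref{F:variety Z1,F:variety Z2,F:variety Z3}.

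The main obstacle is not any individual local degeneration — both building blocks (a smooth conic acquiring a node, and a smooth quadric surface breaking into two planes) are entirely standard — but rather checking that each step is a genuine \emph{projective} degeneration in the precise sense defined above. This requires exhibiting, for each step, the ambient family $W\subset\PP^N\times\C$ together with the projections $\PP^N\dashrightarrow\PP^{n_i}$ realizing the equivalences on the general and special fibres, verifying that every special fibre is reduced with exactly the listed components (no embedded components, no spurious tangencies, and the correct incidence pattern), and making sure that the local degenerations patch coherently over the whole surface so that the double curves and triple points propagate precisely as in the figures. This globalization and bookkeeping is exactly what is carried out in \cite{MRT}; the role of the present theorem here is only to record the outcome in the normalized form needed for the braid-monodromy computation in the next subsection.
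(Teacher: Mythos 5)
Your outline is correct and coincides with the construction in \cite{MRT} that the paper cites without reproving: the three stages $U_{2,2}\rightsquigarrow U_{1,2}\cup U_{1,2}\rightsquigarrow$ (four copies of $U_{1,1}$ in a $2\times2$ grid) $\rightsquigarrow$ (eight planes) are exactly the degenerations recorded in \cref{F:proj degen U_s}, and the incidence data you describe reproduces the line arrangement with eight double lines and five multiple points (four $2$-points and one type M $6$-point) used later in \cref{S:VC pencil Us}. Since the paper states this theorem purely as a citation to \cite{MRT} and gives no independent argument, your sketch --- which correctly identifies that the substantive work is verifying each step is a projective degeneration in the technical sense and defers that verification to \cite{MRT} --- is as complete as the paper's own treatment.
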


\begin{remark}\label{R:configuration intersections}

In each of the intermediate algebraic sets in \cref{F:proj degen U_n,F:proj degen U_s}, any two components adjacent to each other intersect on a rational curve, and the configuration of these curves are same as that of the segments between two regions in the figures. 
For example, in the algebraic set $Y^{(5)}$, there are $9$ lines appearing as intersections of two adjacent components, and $7$ multiple points in the line arrangement (see \cref{F:plane arrangement Un}).

\end{remark}

According to the observation in \cref{R:configuration intersections}, the sets of singular points of the algebraic sets $Y^{(5)}$ and $Z^{(3)}$ are unions of lines, and so are the images of them by projections to $\PP^2$. 
The braid monodromies of these line arrangements in $\PP^2$ are completely determined in \cite[Theorem IX.2.1]{MTI}.
We will next review the relation between these braid monodromies and those for the original varieties $U_n$ and $U_s$ discussed in \cite{MTII,MTIV}. 

In the sequences of regenerations given in \cref{T:proj degen U_n,T:proj degen U_s}, the line arrangement in $Y^{(5)}$ (resp.~$Z^{(3)}$) is also regenerated to the critical value set of the restriction of a generic projection on $U_n$ (resp.~$U_s$). 
In this regeneration process, each line in the arrangement is ``doubled'' in the following sense: if some small disk $D$ intersects a line in the arrangement at the center of $D$ transversely, this disk intersects the critical value set of the restriction of a generic projection at two points. (Note that, without loss of generality, we can assume that the critical value set is sufficiently close to the line arrangement. See \cite[\S.1]{MRT})
Furthermore, taking account of the plane arrangement and its regeneration, we can observe that each of the multiple points of the line arrangement has either of the following two properties:

\begin{itemize}

\item 
Three planes $P_1,P_2,P_3$ go through this point. 
Among the three planes, $P_i$ and $P_{i+1}$ ($i=1,2$) intersect on a line, while $P_1$ and $P_3$ intersect only at the point, in particular two lines $P_1\cap P_2$ and $P_2\cap P_3$ go through the point. 
In the regeneration process the line $P_1\cap P_2$ regenerates before the regeneration of $P_2\cap P_3$

\item 
Six planes $P_1,\ldots, P_6$ go through this point. 
Among the six planes, $P_i$ and $P_j$ intersect on a line if $|j-k|=1$ modulo $6$, or intersect only at the point otherwise. 
Among six lines $P_1\cap P_2,\ldots,P_6\cap P_1$, $P_1\cap P_2$ and $P_4\cap P_5$ regenerate first at the same time, $P_2\cap P_3$ and $P_5\cap P_6$ then regenerate at the same time, and lastly $P_3,\cap P_4$ and $P_6\cap P_1$ regenerate at the same time.  

\end{itemize} 

\noindent
In \cite{MRT}, the former multiple point is called a \emph{$2$-point}, while the latter one is called a \emph{type M $6$-point}. 
Following the rules below, we can determine the braid monodromies of branch points appearing around these points after the regeneration: 

\begin{theorem}[{\cite[Lemma 1]{MTIV}}]\label{T:regen rule 2point}

One branch point appears around a $2$-point after the regeneration. 
Suppose that the Lefschetz vanishing cycle of the $2$-point is a path $\beta$ shown in \cref{F:LVC 2point}, where $v_i$ is the intersection of the reference fiber and the line $P_i\cap P_{i+1}$ ($i=1,2$).
Then the Lefschetz vanishing cycle of the branch point appearing after the regeneration is the path $\beta'$ shown in \cref{F:LVC 2point after}. 

\begin{figure}[htbp]
\centering
\subfigure[]{\includegraphics[width=30mm]{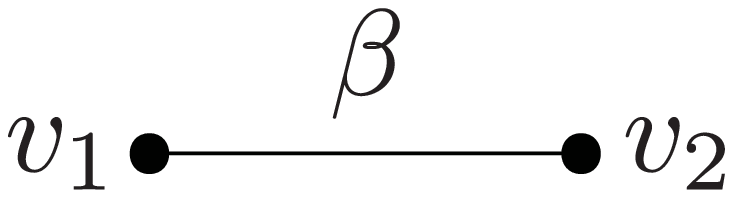}
\label{F:LVC 2point}}\hspace{.5em}
\subfigure[]{\includegraphics[width=30mm]{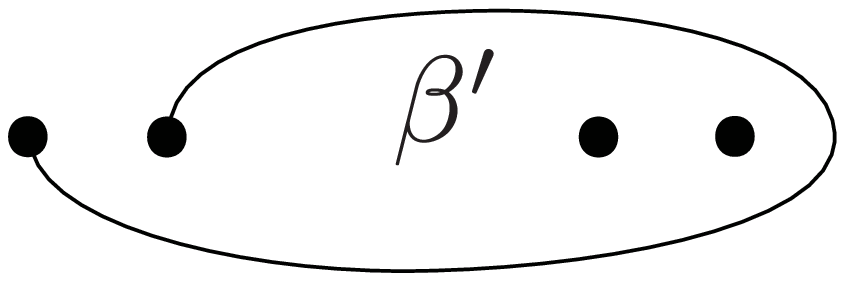}
\label{F:LVC 2point after}}
\caption{Lefschetz vanishing cycles of (a) a $2$-point and (b) the branch point around a $2$-point.}
\label{F:LVC for 2point rengeneration}
\end{figure}

\end{theorem}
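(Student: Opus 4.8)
The plan is to prove this by a local analysis in a small bidisk around the $2$-point, following the regeneration method of Moishezon and Teicher \cite{MTII,MTIV}. First I would choose affine coordinates $(x,y)$ on $\PP^2$ near the image $\bar{p}$ of the $2$-point so that the generic projection $\pi'$ agrees, up to higher-order terms, with $(x,y)\mapsto y$, so that the reference fibre is $\{y=y_0\}$ for small $y_0$, and so that the two intersection lines read $P_1\cap P_2=\{x=0\}$ (the line that regenerates first) and $P_2\cap P_3=\{x=\lambda y\}$ for some $\lambda\ne 0$; these meet the reference fibre at the two marked points $v_1,v_2$ joined by the path $\beta$ of \cref{F:LVC 2point}. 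Before regeneration the branch curve is supported on the line arrangement, and the only local braid-monodromy data near $\bar{p}$ is the factor attached to this node, which is carried by $\beta$.

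Next I would carry out the two prescribed regeneration steps and track how this local picture evolves. When $P_1\cap P_2$ regenerates, the double line over $\{x=0\}$ is smoothed to a conic, locally of the shape $\{x^2=\epsilon^2 y\}$; equivalently the marked point $v_1$ splits into a nearby pair $v_1',v_1''$, the conic develops a single vertical tangency near $y=0$, and the Moishezon--Teicher regeneration lemma replaces the old factor by exactly one new \emph{half-twist} factor, the half-twist along the short path from $v_1'$ to $v_1''$, together with refined node and cusp factors supported on the three strands $\{v_1',v_1'',v_2\}$. Since the hypothesis forces $P_2\cap P_3$ to regenerate strictly later, the second step merely splits $v_2$ into $v_2',v_2''$ and further refines those node/cusp factors on the four strands $\{v_1',v_1'',v_2',v_2''\}$, creating no further vertical tangency near $\bar{p}$; hence precisely one branch point of $\pi'|_C$ appears around the $2$-point, proving the first assertion.

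Finally I would identify the path supporting the surviving half-twist. Conjugating it back through the isotopies and half-twists accumulated along the two regeneration steps, and simplifying by isotopy in the punctured sphere, one checks that it equals the path $\beta'$ of \cref{F:LVC 2point after}. I expect this last step to be the main obstacle: the path is only well defined up to the Hurwitz action and up to the choice of trivialisation of the reference fibre, so a rigorous argument must fix once and for all a ``good'' ordering of the marked points together with a consistent orientation convention for the half-twists, and then verify that no stray half-twist has been absorbed into $\beta'$. This is exactly the careful case-by-case verification carried out in \cite{MTIV}.
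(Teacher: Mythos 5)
This statement is quoted in the paper directly from \cite{MTIV} (Lemma 1) with no in-paper proof, so there is no argument of the authors' own to compare against. Your outline --- the local model of the two lines near the $2$-point, the observation that regenerating $P_1\cap P_2$ first produces exactly one vertical tangency (hence one branch point) while the later regeneration of $P_2\cap P_3$ adds none, and the final identification of the supporting path of the resulting half-twist --- is precisely the Moishezon--Teicher local regeneration analysis underlying the cited lemma, and the one step you defer (pinning down $\beta'$ after fixing ordering and orientation conventions) is exactly what is carried out in \cite{MTIV}, which is also all that the paper itself relies on.
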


\begin{theorem}[{\cite[Lemmas 5, 6, 7 and 8]{MTIV}}]\label{T:regen rule 6point}

Six branch points appear around a type M $6$-point after the regeneration. 
Suppose that the Lefschetz vanishing cycle of the type M $6$-point is a system of paths shown in \cref{F:LVC 6point}, where $v_i$ is the intersection of the reference fiber and the line $P_i\cap P_{i+1}$ (taking indeces modulo $6$). 
Then there exists a system of reference paths $(\alpha_1,\ldots, \alpha_6)$ for the six branch points, which appear in this order when we go around the reference point counterclockwise, such that the Lefschetz vanishing cycle associated with $\alpha_i$ is the path $\beta_i$, where $\beta_1$ and $\beta_6$ are shown in \cref{F:LVC beta1,F:LVC beta6}, while $\beta_2,\beta_3,\beta_4,\beta_5$ are defined as:
\[
\beta_2 = \beta,\hspace{.5em}\beta_3 = \tau_{\gamma_3}^{-1}\tau_{\gamma_4}^{-1}(\beta), \hspace{.5em}\beta_4 = \tau_{\gamma_1}^{-1}\tau_{\gamma_2}^{-1}(\beta), \hspace{.5em}\beta_5 = \tau_{\gamma_1}^{-1}\tau_{\gamma_2}^{-1}\tau_{\gamma_3}^{-1}\tau_{\gamma_4}^{-1}(\beta).  
\] 
(Here we denote the positive half twist along $\gamma_i$ by $\tau_{\gamma_i}$.)

\begin{figure}[htbp]
\centering
\subfigure[The Lefschetz vanishing cycle of a type M $6$-point.]{\includegraphics[width=60mm]{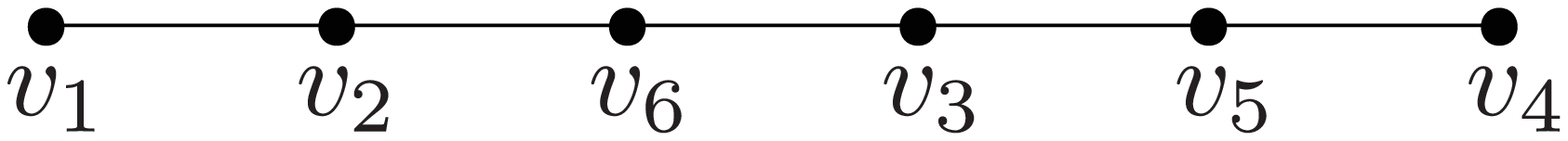}
\label{F:LVC 6point}}

\subfigure[The Lefschetz vanishing cycle $\beta_1$.]{\includegraphics[width=60mm]{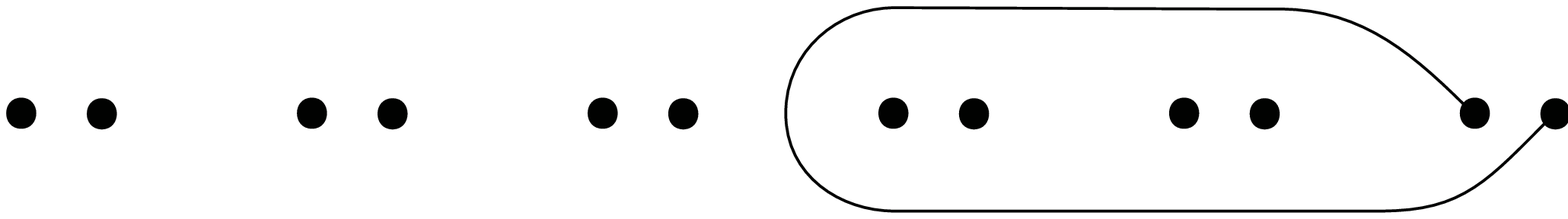}
\label{F:LVC beta1}}\hspace{.5em}
\subfigure[The Lefschetz vanishing cycle $\beta_6$.]{\includegraphics[width=60mm]{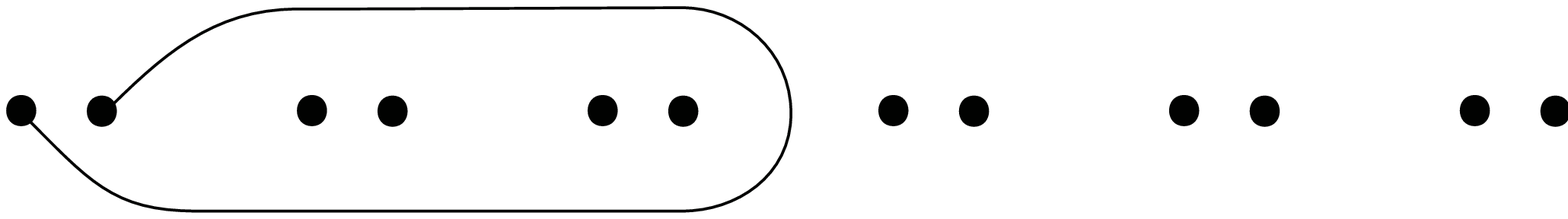}
\label{F:LVC beta6}}

\subfigure[The paths $\gamma_1,\ldots, \gamma_4$.]{\includegraphics[width=60mm]{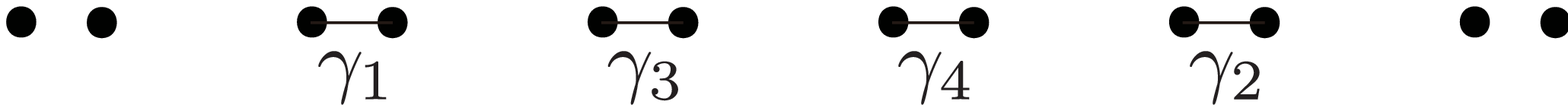}
\label{F:curve gammas}}\hspace{.5em}
\subfigure[The path $\beta$.]{\includegraphics[width=60mm]{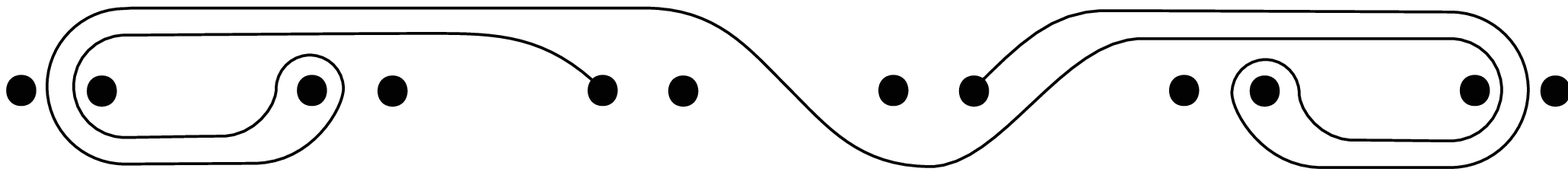}
\label{F:LVC beta}}
\caption{The paths around a type M $6$-point.}
\label{F:LVC for 6point rengeneration}
\end{figure}

\end{theorem}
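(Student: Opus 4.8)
The statement packages together Lemmas 5--8 of \cite{MTIV}, and my plan would be to recover it by the local braid-monodromy analysis that underlies those lemmas, and then transcribe the outcome into the present conventions. I would begin by fixing a local model for a type M $6$-point in the plane arrangement $Z^{(3)}$ (equivalently in the relevant portion of $Y^{(5)}$): six planes $P_1,\dots,P_6$ through the point with $P_i\cap P_{i+1}$ a line and $P_i\cap P_j$ a point for $|i-j|\not\equiv 1$ (indices mod $6$), the label ``type M'' recording that in the regeneration the six double curves are doubled in the three successive pairs $\{P_1\cap P_2,\,P_4\cap P_5\}$, $\{P_2\cap P_3,\,P_5\cap P_6\}$, $\{P_3\cap P_4,\,P_6\cap P_1\}$. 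Projecting generically to $\PP^2$ turns the six double curves into six lines through an ordinary $6$-fold point, and by \cite[Theorem IX.2.1]{MTI} the braid monodromy of this line arrangement is explicit; from it one reads off the Lefschetz vanishing cycle of the un-regenerated $6$-point together with the auxiliary paths $\beta$ and $\gamma_1,\dots,\gamma_4$ in the reference fiber.

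The core step is to propagate this data through the three regeneration stages. At each stage a pair of opposite double curves is doubled; each doubling replaces a single branch point of the simple branched covering $\theta$ near the $6$-point by a new branch point (so the three stages contribute six branch points in total), introduces the expected nodes and cusps — whose braid monodromies are, as remarked above, irrelevant to the pencil — and conjugates the braid-monodromy factors of everything not yet regenerated by half-twists along the doubled curves, in the same spirit as the $2$-point rule of \cref{T:regen rule 2point} but carried out on each branch of the $6$-fold point. Carrying this out, the branch point produced at the ``middle'' stage is the half-twist along $\beta$; the branch points produced at the flanking stages acquire the conjugating factors $\tau_{\gamma_3}^{-1}\tau_{\gamma_4}^{-1}$, $\tau_{\gamma_1}^{-1}\tau_{\gamma_2}^{-1}$ and $\tau_{\gamma_1}^{-1}\tau_{\gamma_2}^{-1}\tau_{\gamma_3}^{-1}\tau_{\gamma_4}^{-1}$, giving $\beta_3,\beta_4,\beta_5$; and the branch points produced at the first and last doublings come out with the special shapes $\beta_1,\beta_6$. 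Finally I would verify that, once all branch values are placed in their true angular positions around $q_0$, the associated Hurwitz system orders them as $\alpha_1,\dots,\alpha_6$ with vanishing cycles $\beta_1,\dots,\beta_6$ exactly as stated, and that pulling back each half-twist path under $\theta$ yields the corresponding vanishing cycle of $f_n$ or $f_s$.

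The main obstacle is exactly this last point — reconciling the ``regeneration order'' of the double curves with the ``Hurwitz order'' of the resulting branch values, and confirming the precise shapes of the extreme paths $\beta_1,\beta_6$ rather than merely further $\tau$-conjugates of $\beta$. This is the delicate, essentially case-by-case path-chasing that occupies \cite[Lemmas 5--8]{MTIV}; in practice I would import their conclusion (suitably redrawn in our pictures) rather than re-derive it, since nothing in the present paper requires re-opening that computation.
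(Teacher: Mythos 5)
The paper gives no proof of this statement: it is imported verbatim as \cite[Lemmas 5, 6, 7 and 8]{MTIV}, exactly as you ultimately recommend, and your sketch of the three-stage regeneration (opposite pairs of double curves doubling in succession, with the later branch points conjugated by the half-twists $\tau_{\gamma_i}^{-1}$ along the already-regenerated curves) is a faithful account of what those lemmas establish. Your closing caveat is also the right one — the delicate content is precisely the Hurwitz ordering of the six branch values and the special shapes of $\beta_1$ and $\beta_6$, which is why the paper defers entirely to Moishezon--Teicher rather than re-deriving it.
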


\begin{remark}\label{R:extra branch point}

In general, a generic projection on a regenerated surface to $\PP^2$ might have branch points which do not appear around multiple points of the original line arrangement. 
Such branch points are called \emph{extra branch points} in \cite{RobbPhD,MRT}. 
According to Proposition 3.3.4 in \cite{RobbPhD}, there are no extra branch points in $U_n$, while there are two extra branch points in $U_s$ (cf.~\cite[Proposition 5.2.4]{MRT}). 
We will explain how to determine the braid monodromies of these branch points in \cref{S:VC pencil Us}. 

\end{remark}

\subsection{Vanishing cycles of a pencil of degree-$3$ curves in $\PP^2$}\label{S:VC pencil Un}

We will first calculate vanishing cycles of the Lefschetz pencil $f_n:U_n\dashrightarrow \PP^1$ given in \cref{T:genus-1 LP is blow-up of N_9 or S_8}. 
As shown in \cref{T:proj degen U_n}, we can take a sequence of projective degenerations from $U_n$ to a union of $9$ planes $Y^{(5)}$. 
Let $C_n$ be the union of all the lines in $Y^{(5)}$ appearing as intersections of two planes in $Y^{(5)}$. 
We denote the planes in $Y^{(5)}$, the lines in $C_n$ and the multiple points in $C_n$ by $\{P_i\}_{i=1}^9$, $\{l_j\}_{j=1}^9$ and $\{a_k\}_{k=1}^7$, respectively, as shown in \cref{F:plane arrangement Un}.
\begin{figure}[htbp]
\centering
\includegraphics[width=115mm]{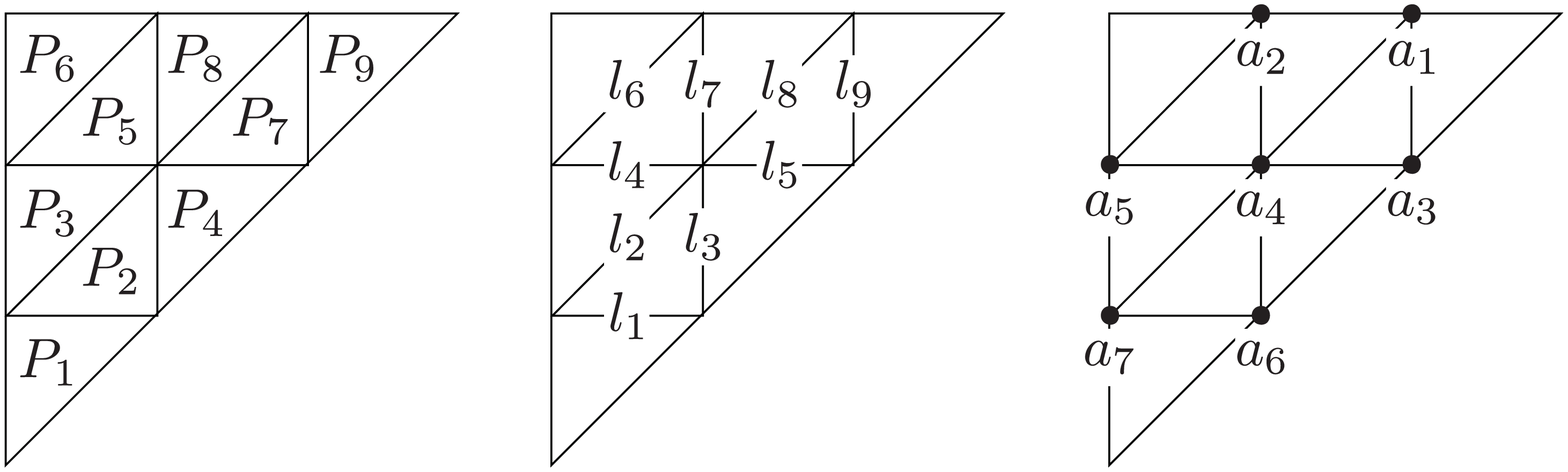}
\caption{Planes, lines and multiple points in $Y^{(5)}$.}
\label{F:plane arrangement Un}
\end{figure}
Note that all the multiple points in $C_n$ are $2$-points except for the unique type M $6$-point $a_4$. 
We can assume that $Y^{(5)}$ and $Y^{(0)}=U_n$ are both contained in $\PP^N$ and these are sufficiently close (cf.~\cite[\S.1]{MRT}). 
Take generic projections $\pi:\PP^N\dashrightarrow \PP^2$ and $\pi':\PP^2\dashrightarrow \PP^1$.
Let $\tilde{\pi}:U_n\to \PP^2$ be the restriction of $\pi$ on $U_n$ and $a_i' = \pi'\circ \pi(a_i)$. 
As observed in \cite{MTIV}, we can regard $C_n$ as a sub-arrangement of a line arrangement dual to generic introduced in \cite[Section IX]{MTI}. 
By \cite[Theorem IX.2.1]{MTI}, we can take a point $a_0'\in \PP^1$ away from $a_1',\ldots, a_7'$ and a simple path $\alpha_i'$ from $a_0'$ to $a_i'$ so that $\alpha_i'$'s are mutually disjoint except at the common initial point $a_0'$, the paths $\alpha_1',\ldots, \alpha_7'$ appear in this order when we go around $a_0'$ counterclockwise, and the Lefschetz vanishing cycles associated with the paths $\alpha_1',\ldots, \alpha_7'$ are as shown in \cref{F:LVCs for dual to generic ass with Un}, where the points labeled with $i$ is the intersection between the fiber $\overline{{\pi'}^{-1}(a_0')}\subset \PP^2$ and the line $\pi(l_i)$ ($i=1,\ldots, 9$). 
\begin{figure}[htbp]
\centering
\subfigure[The LVCs associated with $\alpha_1'$ and $\alpha_7'$.]{\includegraphics[width=60mm]{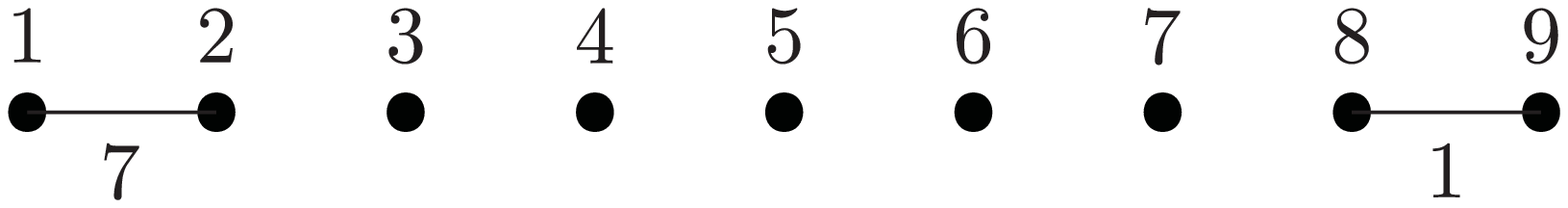}}\hspace{.6em}
\subfigure[The LVC associated with $\alpha_2'$.]{\includegraphics[width=60mm]{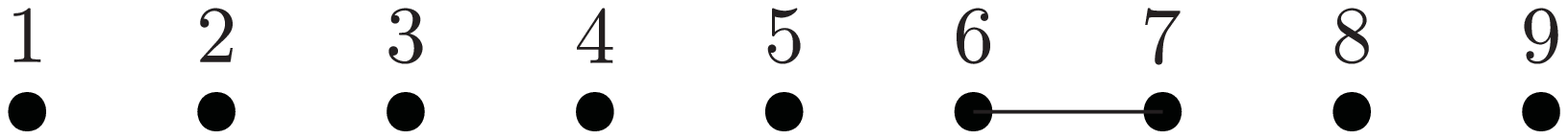}}

\subfigure[The LVC associated with $\alpha_3'$.]{\includegraphics[width=60mm]{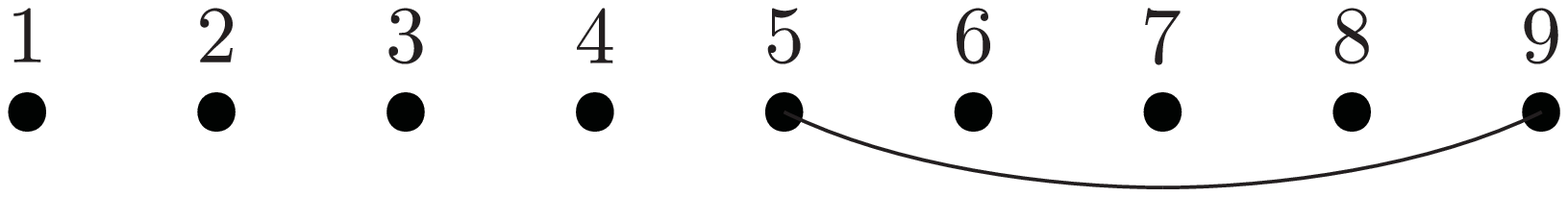}}\hspace{.6em}
\subfigure[The LVC associated with $\alpha_4'$.]{\includegraphics[width=60mm]{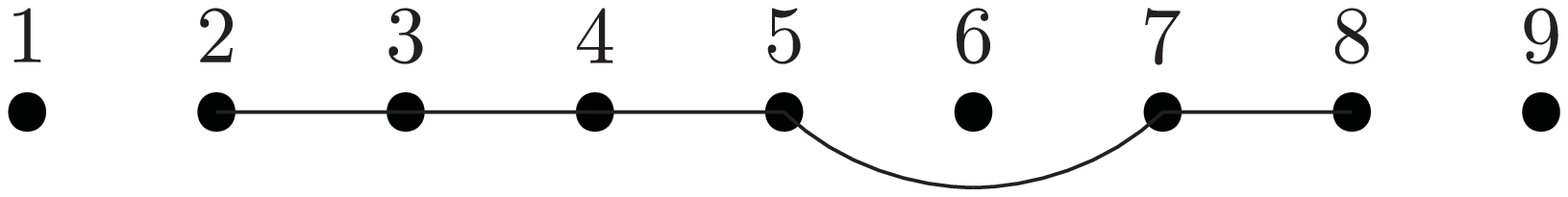}}

\subfigure[The LVC associated with $\alpha_5'$.]{\includegraphics[width=60mm]{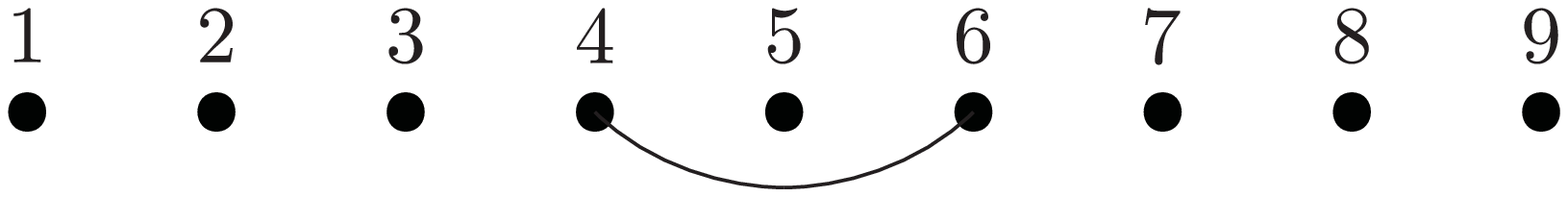}}\hspace{.6em}
\subfigure[The LVC associated with $\alpha_6'$.]{\includegraphics[width=60mm]{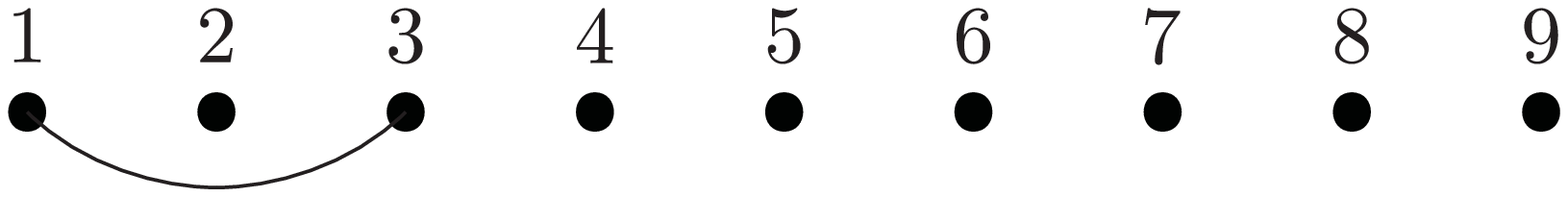}}

\caption{The Lefschetz vanishing cycles (LVC) of a line arrangement dual to points in general position.}
\label{F:LVCs for dual to generic ass with Un}
\end{figure}
We next apply \cref{T:regen rule 2point,T:regen rule 6point} in order to obtain the braid monodromies of the branch points of the restriction $\pi'|_{\widetilde{C_n}}\widetilde{C_n}\to \PP^1$, where $\widetilde{C_n}$ is the critical value set of $\tilde{\pi}:U_n\to \PP^2$. 
We eventually obtain a Hurwitz path system $(\alpha_1,\ldots, \alpha_{12})$ of $f_n$ ($=\pi'\circ \tilde{\pi}$) such that the Lefschetz vanishing cycles of the branch points associated with $\alpha_1,\ldots, \alpha_4,\alpha_9,\ldots,\alpha_{12}$ are as shown in \cref{F:LVC critical value set Un}, while those associated with $\alpha_5,\ldots,\alpha_8$ are respectively equal to
$\beta,\tau_{\gamma_3}^{-1}\tau_{\gamma_4}^{-1}(\beta),\tau_{\gamma_1}^{-1}\tau_{\gamma_2}^{-1}(\beta),\tau_{\gamma_1}^{-1}\tau_{\gamma_2}^{-1}\tau_{\gamma_3}^{-1}\tau_{\gamma_4}^{-1}(\beta)$, where the paths $\beta,\gamma_1,\ldots, \gamma_4$ are given in \cref{F:curves for LVC critv Un} and $\tau_{\gamma_i}$ is the half twist along $\gamma_i$.

In order to obtain vanishing cycles of $f_n$, we have to take the circle components of the preimages of the Lefschetz vanishing cycles under the branched covering
\begin{equation}\label{Eq:2-dim branched cover for Un}
\tilde{\pi}|_{\overline{f_n^{-1}(a_0')}}:\overline{f_n^{-1}(a_0')} \to \overline{{\pi'}^{-1}(a_0')}
\end{equation}
branched at $\overline{{\pi'}^{-1}(a_0')} \cap \widetilde{C_n}$. 
We denote the closure $\overline{{\pi'}^{-1}(a_0')}$ by $S$, the intersection $\overline{{\pi'}^{-1}(a_0')} \cap \widetilde{C_n}$ by $Q$. 
We take a point $q_0\in S\setminus Q$ and regard an element $\sigma$ in the symmetry group $\mathfrak{S}_9$ as a self-bijections of $\tilde{\pi}^{-1}(q_0)$ sending the point in $\tilde{\pi}^{-1}(q_0)$ close to $P_i$ to that close to $P_{\sigma(i)}$ for each $i=1,\ldots,9$ (note that we assumed that $U_n$ is sufficiently close to $Y^{(5)}$).
Let $\varrho:\pi_1(S\setminus Q, q_0) \to \mathfrak{S}_{9}$ be the monodromy representation of the branched covering \eqref{Eq:2-dim branched cover for Un}. 
As shown in \cref{F:path monodromy branch Un}, we take a system of oriented paths $\eta_1,\eta_1',\ldots, \eta_9,\eta_9'$ such that the common initial point of them is $q_0$ and the end point of $\eta_i$ (resp.~$\eta_i'$) is the points labeled with $i$ (resp.~$i'$). 
\begin{figure}[htbp]
\centering
\includegraphics[width=100mm]{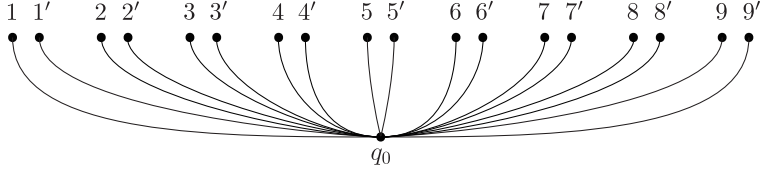}
\caption{The paths $\eta_1,\eta_1',\ldots, \eta_9,\eta_9'$ in $S$. 
In this figure these paths appear in this order when we go around $q_0$ clockwise.}
\label{F:path monodromy branch Un}
\end{figure}
Let $\widetilde{\eta_i}$ be a based loop in $S\setminus Q$ with the base point $q_0$ which can be obtained by connecting $q_0$ with a small clockwise circle around the point label with $i$ using $\eta_i$. 
We also take a based loop $\widetilde{\eta_i'}$ in a similar manner. 
The images $\varrho ([\widetilde{\eta_i}])$ and $\varrho ([\widetilde{\eta_i'}])$ are easily calculated as follows: 
\begin{align*}
& \varrho ([\widetilde{\eta_1}]) = \varrho ([\widetilde{\eta_1'}]) = (12), \hspace{.5em} \varrho ([\widetilde{\eta_2}]) = \varrho ([\widetilde{\eta_2'}]) = (23), \hspace{.5em}\varrho ([\widetilde{\eta_3}]) = \varrho ([\widetilde{\eta_3'}]) = (24), \\
& \varrho ([\widetilde{\eta_4}]) = \varrho ([\widetilde{\eta_4'}]) = (35), \hspace{.5em} \varrho ([\widetilde{\eta_5}]) = \varrho ([\widetilde{\eta_5'}]) = (47), \hspace{.5em}\varrho ([\widetilde{\eta_6}]) = \varrho ([\widetilde{\eta_6'}]) = (56), \\
& \varrho ([\widetilde{\eta_7}]) = \varrho ([\widetilde{\eta_7'}]) = (58), \hspace{.5em} \varrho ([\widetilde{\eta_8}]) = \varrho ([\widetilde{\eta_8'}]) = (78), \hspace{.5em}\varrho ([\widetilde{\eta_9}]) = \varrho ([\widetilde{\eta_9'}]) = (79).
\end{align*}
Note that all of these images are transpositions. 
We can thus describe the branched covering \eqref{Eq:2-dim branched cover for Un} as shown in \cref{F:2-dim branch cover for Un}. 
In this figure, the red circles in the upper surface $\overline{f_n^{-1}(a_0')}$ are the circle components of the preimages of the red paths between branch points in the lower sphere $\overline{{\pi'}^{-1}(a_0')}$. 
The point represented by $\times$ in the lower sphere is the base point of $\pi'$, while those in the upper surface are the preimages of it. 
As described in \cref{F:isotopy branch Un}, the complement of small disk neighborhoods of the base points of $f_n$ in the closure $\overline{f_n^{-1}(a_0')}$ is a nine-holed torus. 
The surface in \cref{F:isotopy branch Un shrinked} is obtained from \cref{F:isotopy branch Un original} by shrinking the subsurfaces labeled with $1,6$ and $9$. 
Those in \cref{F:isotopy branch Un shrinked,F:isotopy branch Un deformed} are homeomorphic to each other. 
Taking the preimages of the paths described in \cref{F:LVC critical value set Un,F:curves for LVC critv Un} under the branched covering described in \cref{F:2-dim branch cover for Un}, we can eventually obtain a monodromy factorization $t_{c_{12}}\circ \cdots \circ t_{c_1} = t_{\delta_1}\circ \cdots\circ t_{\delta_9}$ of $f_n$, where the simple closed curves $c_1,\ldots, c_5,c_9,\ldots, c_{12}$ are given in \cref{F:vanishing cycle Un}, while $c_6,c_7,c_8$ are respectively equal to $t_{d_3}^{-1}t_{d_4}^{-1}(c_5), t_{d_1}^{-1}t_{d_2}^{-1}(c_5), t_{d_1}^{-1}t_{d_2}^{-1}t_{d_3}^{-1}t_{d_4}^{-1}(c_5)$, where the simple closed curves $d_1,d_2,d_3$ and $d_4$ are given in \cref{F:vanishing cycle Un5}.

\subsection{Vanishing cycles of a pencil of curves with bi-degree-$(2,2)$ in $\PP^1\times \PP^1$}\label{S:VC pencil Us}

We will next calculate vanishing cycles of the Lefschetz pencil $f_s:U_s\dashrightarrow \PP^1$. 
Again, let $C_s$ be the union of all the lines in $Z^{(3)}$ appearing as intersections of two plane components, and denote the planes in $Z^{(3)}$, the lines in $C_s$ and the multiple points in $C_s$ by $\{P_s\}_{i=1}^8$, $\{l_j\}_{j=1}^8$ and $\{a_k\}_{k=2}^6$, respectively, as shown in \cref{F:plane arrangement Us}.
\begin{figure}[htbp]
\centering
\includegraphics[width=115mm]{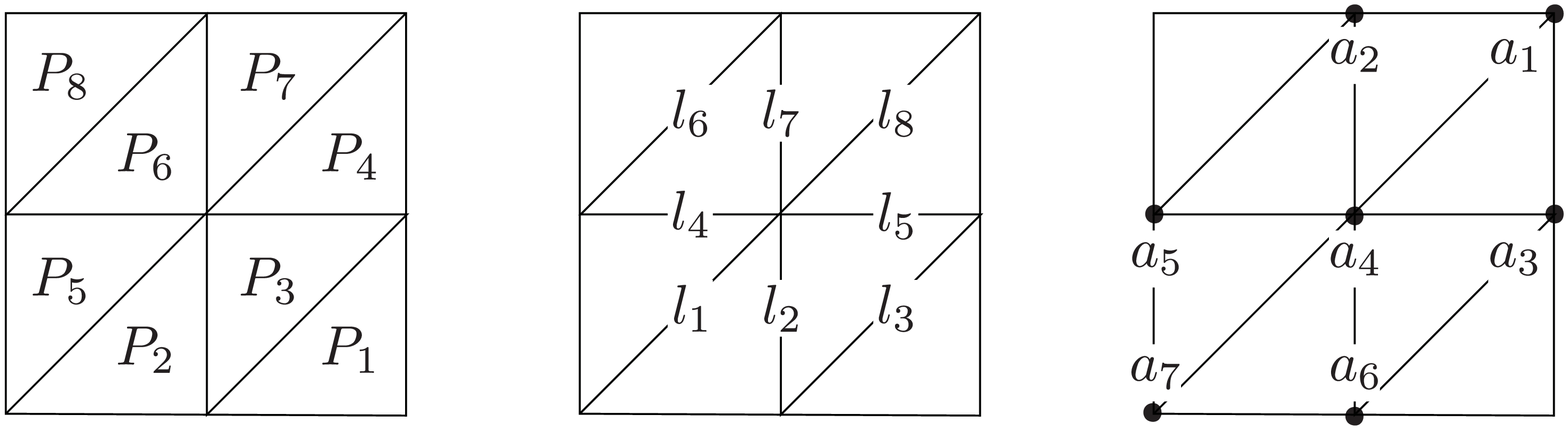}
\caption{Planes, lines and multiple points in $Z^{(3)}$.}
\label{F:plane arrangement Us}
\end{figure}
We further take points $a_1$ and $a_7$ on $l_8$ and $l_1$, respectively. 
Suppose that $Z^{(3)}$ and $U_s=Z^{(0)}$ are both contained in $\PP^N$ and these are sufficiently close. 
Moreover, without loss of generality, we can assume that the line arrangement $C_s$ is the same as that given in \cite[Theorem IX.2.1]{MTI} and the order of the lines in $C_s$ (given by indices) is the same as that in \cite[Theorem IX.2.1]{MTI} (meaning that the order of the vertices in $C_s$ is opposite to that in \cite[Theorem IX.2.1]{MTI}). 
As in the previous subsection, let $\pi:\PP^N\dashrightarrow \PP^2$ and $\pi':\PP^2\dashrightarrow \PP^1$ be generic projections, $\tilde{\pi}:U_s\to \PP^2$ be the restriction of $\pi$, $\widetilde{C_s}$ be the critical value set of $\tilde{\pi}$ and $a_i' = \pi'\circ \pi(a_i)$. 
Applying \cite[Theorem IX.2.1]{MTI}, we take a reference point $a_0'\in \PP^1$ and reference paths $\alpha_i'$ from $a_0'$ to $a_i'$ ($i=2,\ldots, 6$) so that the the corresponding Lefschetz vanishing cycles are as shown in \cref{F:LVCs for dual to generic ass with Us}. 
\begin{figure}[htbp]
\centering
\subfigure[The LVCs associated with $\alpha_2'$, $\alpha_3'$, $\alpha_5'$ and $\alpha_6'$.]{\includegraphics[width=60mm]{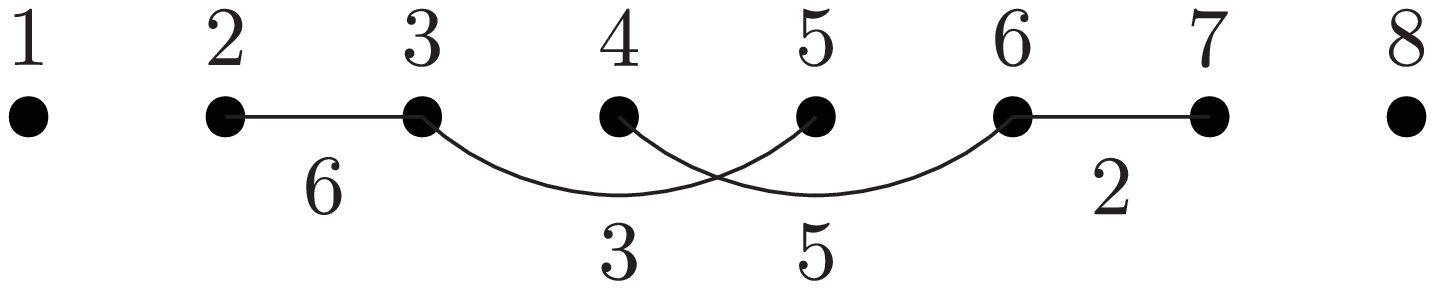}}\hspace{.6em}
\subfigure[The LVC associated with $\alpha_4'$.]{\includegraphics[width=60mm]{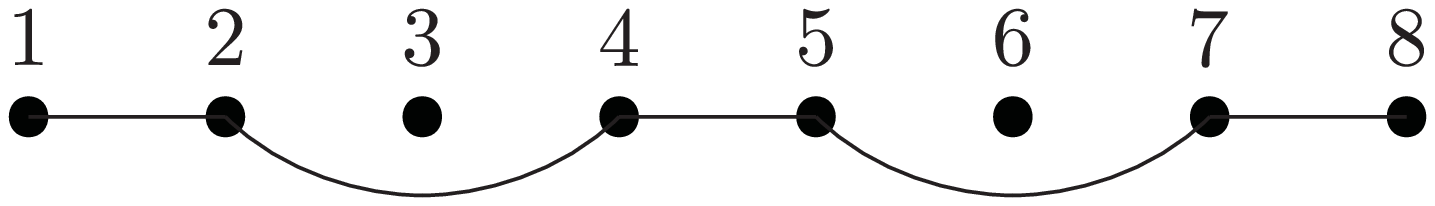}}
\caption{The Lefschetz vanishing cycles (LVC) of a line arrangement dual to points in general position.}
\label{F:LVCs for dual to generic ass with Us}
\end{figure}

As observed in \cref{R:extra branch point}, there are branch points of $\tilde{\pi}|_{\widetilde{C_s}}:\widetilde{C_s}\to \PP^1$ which are not close to multiple points of $C_s$.  
We take the regeneration from $Z^{(3)}$ to $Z^{(2)}$ so that the planes $P_4$ and $P_7$ (resp.~$P_2$ and $P_5$) are regenerated to $U_{1,1}$ going through the points $a_1,a_2,a_3,a_4$ (resp.~$a_4,a_5,a_6,a_7$). (See \cite[\S.3.5]{MTIII} for the detail of this regeneration.)
Analyzing the model of such a regeneration given in the proof of \cite[Proposition 14]{MTIII}, we can verify that the two extra branch points of $\tilde{\pi}|_{\widetilde{C_s}}$ appear around $a_1$ and $a_7$. 
We can further show that, for suitable reference paths $\alpha_1$ and $\alpha_{12}$ from $a_0'$ to the images of the branch points near $a_1$ and $a_7$, respectively, the Lefschetz vanishing cycles of the two extra branch points associated with $\alpha_1$ and $\alpha_{12}$ are as shown in \cref{F:LVC extrabranch Us} (cf.~\cite[\S.3.3]{RobbPhD}). 
By applying \cref{T:regen rule 2point,T:regen rule 6point}, we can take reference paths $\alpha_i$ ($i=2,\ldots,11$) so that $(\alpha_1,\ldots,\alpha_{12})$ is a Hurwitz path system of $f_s$, and the Lefschetz vanishing cycles of branch points of $\tilde{\pi}|_{\widetilde{C_s}}$ associated with $\alpha_2,\ldots,\alpha_4,\alpha_9,\alpha_{11}$ are as shown in , while those associated with $\alpha_5,\ldots,\alpha_8$ are respectively equal to $\beta,\tau_{\gamma_3}^{-1}\tau_{\gamma_4}^{-1}(\beta),\tau_{\gamma_1}^{-1}\tau_{\gamma_2}^{-1}(\beta),\tau_{\gamma_1}^{-1}\tau_{\gamma_2}^{-1}\tau_{\gamma_3}^{-1}\tau_{\gamma_4}^{-1}(\beta)$, where the paths $\beta,\gamma_1,\ldots, \gamma_4$ are given in \cref{F:LVC critv Us beta,F:LVC critv Us gamma}.
As in the previous subsection, we next consider the following branched covering: 
\begin{equation}\label{Eq:2-dim branched cover for Us}
\tilde{\pi}|_{\overline{f_s^{-1}(a_0')}}:\overline{f_n^{-1}(a_0')} \to \overline{{\pi'}^{-1}(a_0')}. 
\end{equation}
By calculating the monodromy representation of this covering, we can describe this branched covering as shown in \cref{F:2-dim branch cover for Us,F:isotopy branchcover Us}.
Taking the preimages of the paths described in \cref{F:LVC critical value set Us} under the branched covering described in \cref{F:2-dim branch cover for Us}, we can obtain a monodromy factorization $t_{c_{12}}\circ \cdots \circ t_{c_1} = t_{\delta_1}\circ \cdots\circ t_{\delta_8}$ of $f_s$, where the simple closed curves $c_1,\ldots, c_5,c_9,\ldots, c_{12}$ are given in \cref{F:vanishing cycle Us}, while $c_6,c_7,c_8$ are respectively equal to $t_{d_3}^{-1}t_{d_4}^{-1}(c_5), t_{d_1}^{-1}t_{d_2}^{-1}(c_5), t_{d_1}^{-1}t_{d_2}^{-1}t_{d_3}^{-1}t_{d_4}^{-1}(c_5)$, where the simple closed curves $d_1,d_2,d_3$ and $d_4$ are given in \cref{F:vanishing cycle Us5}.

\section{Combinatorial structures of genus-$1$ pencils}\label{sec:combinatorial structure}

In this section we study the combinatorial structures of the monodromy factorizations associated with the genus-$1$ holomorphic Lefschetz pencils. 
We will simplify those factorizations and show that they are Hurwitz equivalent to the known $k$-holed torus relations, which were combinatorially constructed by Korkmaz-Ozbagci~\cite{KorkmazOzbagci2008} and Tanaka~\cite{Tanaka2012}. In particular, we will see that a genus-$1$ holomorphic Lefschetz pencil obtained by blowing-up another holomorphic pencil is uniquely determined by the number of the blown-up points and independent of particular choices of such points. Thus, we complete the classification of genus-$1$ holomorphic Lefschetz pencils in the smooth category.

In the remainder of the paper, we simplify the notations regarding Dehn twists as follows.
We will denote the right-handed Dehn twist along a curve $\alpha$ also by $\alpha$, and its inverse, i.e., the left-handed Dehn twist along $\alpha$, by $\bar{\alpha}$.
We continue to use the functional notation for multiplication; $\beta \alpha$ means we first apply $\alpha$ and then $\beta$.
In addition, we denote the conjugation $\alpha \beta \bar{\alpha}$ by ${}_{\alpha}\!(\beta)$, which is the Dehn twist along the curve $t_{\alpha}(\beta)$.
Finally, we use the symbol $\partial_k$ to denote the boundary multi-twist $\delta_1 \delta_2 \cdots \delta_k$. 

\subsection{Monodromies of the minimal pencils}
We first deal with the minimal holomorphic Lefschetz pencils $f_n$ and $f_s$ as they are the base cases in the sense that the other holomorphic pencils are obtained by blowing-up those two pencils.
\subsubsection{Monodromy of $f_n$}
\begin{figure}[htbp]
	\centering
	\includegraphics[height=170pt]{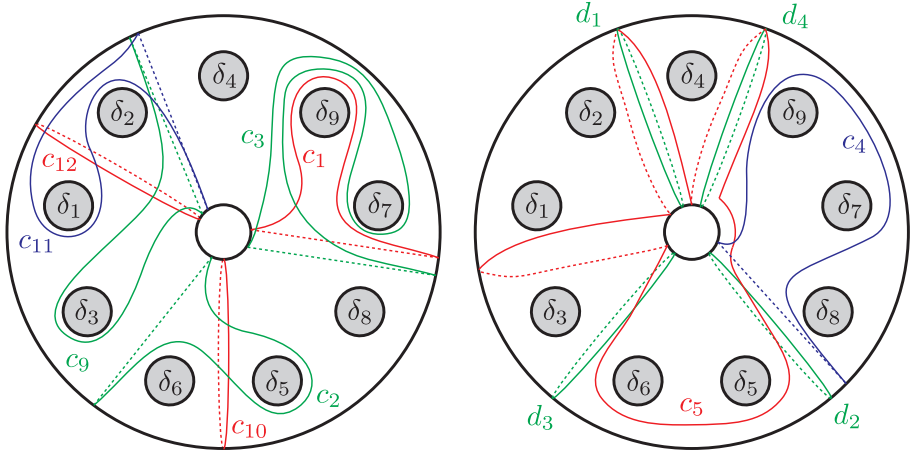}
	\caption{Redrawing of the vanishing cycles of $f_n$ on a standard $9$-holed torus $\Sigma_1^9$.}
	\label{F:k=9fn}
\end{figure}
\begin{figure}[htbp]
	\centering
	\includegraphics[height=170pt]{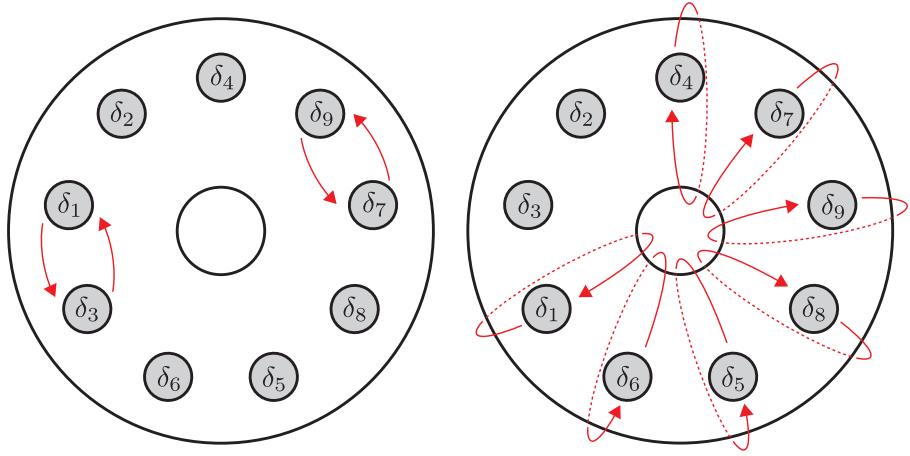}
	\caption{Pushing of boundary components.}
	\label{F:k=9transformation}
\end{figure}
\begin{figure}[htbp]
	\centering
	\includegraphics[height=170pt]{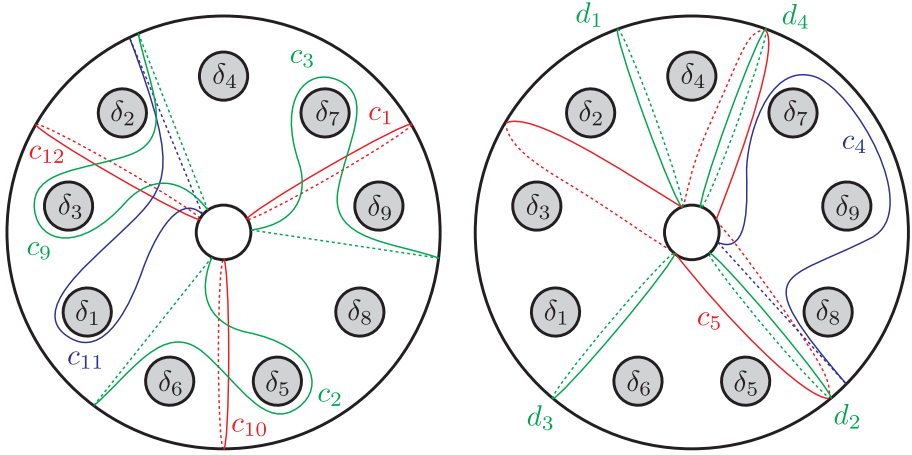}
	\caption{Vanishing cycles after repositioning the surface $\Sigma_1^9$.}
	\label{F:k=9fn_repositioned}
\end{figure}
In Section~\ref{S:VC pencil Un} we obtained a monodromy factorization of $f_n$,
\begin{align*}
c_{12} c_{11} c_{10} c_{9} c_{8} c_{7} c_{6} c_{5} c_{4} c_{3} c_{2} c_{1} = \delta_1 \delta_2\delta_3 \delta_4 \delta_5 \delta_6 \delta_7 \delta_8 \delta_9
\end{align*} 
with the vanishing cycles computed in Figure~\ref{F:vanishing cycle Un} where $c_6 = {}_{\bar{d}_3 \bar{d}_4}(c_5)$, $c_7 = {}_{\bar{d}_1 \bar{d}_2}(c_5)$, and $c_8 = {}_{\bar{d}_1 \bar{d}_2 \bar{d}_3 \bar{d}_4}(c_5)$. The curves are redrawn on a standardly positioned torus in Figure~\ref{F:k=9fn}.
We further reposition the surface by pushing the boundary components as indicated in Figure~\ref{F:k=9transformation}; we first swap $\delta_1$ and $\delta_3$, also $\delta_7$ and $\delta_9$, then push the boundary components except for $\delta_2$ and $\delta_3$ along the meridian in the indicated directions.
Accordingly, the vanishing cycles are now configured as in Figure~\ref{F:k=9fn_repositioned}.
We further modify the factorization by Hurwitz moves.
\begin{align*}
\delta_1 \delta_2\delta_3 \delta_4 \delta_5 \delta_6 \delta_7 \delta_8 \delta_9
&= c_{12} c_{11} c_{10} c_{9} c_{8} c_{7} c_{6} c_{5} c_{4} c_{3} c_{2} c_{1} \\
&\sim c_{11} \underline{c_9 c_8} \; \underline{c_7 c_6 c_5 c_{12}} c_4 c_3 c_1 c_2 c_{10} \\
&\sim c_{11} c_8^\prime c_9 c_{12} {}_{\bar{c}_{12}}\!(c_7) {}_{\bar{c}_{12}}\!(c_6) \underline{c_5^\prime c_4} c_3 c_1 c_2 c_{10} \\
&\sim c_{11} c_8^\prime c_9 c_{12} {}_{\bar{c}_{12}}\!(c_7) \underline{{}_{\bar{c}_{12}}\!(c_6) c_4^\prime} c_5^\prime c_3 c_1 c_2 c_{10} \\
&\sim c_{11} c_8^\prime c_9 c_{12} {}_{\bar{c}_{12}}\!(c_7) c_4^\prime \underline{{}_{\bar{c}_4^\prime \bar{c}_{12}}\!(c_6) c_5^\prime c_3 c_1} c_2 c_{10} \\
&\sim 
\underline{c_{11} c_8^\prime} c_9 c_{12} 
{}_{\bar{c}_{12}}\!(c_7) c_4^\prime c_3 c_1 
\underline{{}_{\bar{c}_1 \bar{c}_3 \bar{c}_4^\prime \bar{c}_{12}}\!(c_6) c_5^{\prime\prime}} c_2 c_{10} \\
&\sim 
c_8^\prime \underline{c_{11}^\prime c_9} c_{12} 
{}_{\bar{c}_{12}}\!(c_7) \underline{c_4^\prime c_3} c_1 
c_5^{\prime\prime} \underline{c_6^{\prime} c_2} c_{10} \\
&\sim 
c_8^\prime {}_{c_{11}^\prime}\!(c_9) \underline{c_{11}^\prime c_{12}}
{}_{\bar{c}_{12}}\!(c_7) {}_{c_4^\prime}\!(c_3) \underline{c_4^\prime c_1}
c_5^{\prime\prime} {}_{c_6^{\prime}}\!(c_2) \underline{c_6^{\prime} c_{10}} \\
&\sim 
c_8^\prime {}_{c_{11}^\prime}\!(c_9) c_{12} {}_{\bar{c}_{12}}\!(c_{11}^\prime) 
{}_{\bar{c}_{12}}\!(c_7) {}_{c_4^\prime}\!(c_3) c_1 {}_{\bar{c}_1}\!(c_4^\prime) 
c_5^{\prime\prime} {}_{c_6^{\prime}}\!(c_2) c_{10} {}_{\bar{c}_{10}}\!(c_6^{\prime})  
\end{align*}
where $c_8^\prime ={}_{c_9}\!(c_8)$, $c_5^\prime = {}_{\bar{c}_{12}}\!(c_5)$, $c_4^\prime = {}_{c_5^\prime}\!(c_4)$, $c_5^{\prime\prime} = {}_{\bar{c}_1 \bar{c}_3}\!(c_5^\prime)$, $c_{11}^\prime = {}_{\bar{c}_8^\prime}(c_{11})$, $c_6^\prime = {}_{\bar{c}_5^{\prime\prime} \bar{c}_1 \bar{c}_3 \bar{c}_4^\prime \bar{c}_{12}}\!(c_6)$. 
It is routine to observe that the resulting curves are as depicted in Figure~\ref{F:k=9simple} and the last expression is $a_1 b_1 b_2 b_3 a_4 b_4 b_5 b_6 a_7 b_7 b_8 b_9$ up to labeling and a permutation.
Thus, we obtain the simpler monodromy factorization of $f_n$:
\begin{align}
a_1 b_1 b_2 b_3 a_4 b_4 b_5 b_6 a_7 b_7 b_8 b_9 = \delta_1 \delta_2\delta_3 \delta_4 \delta_5 \delta_6 \delta_7 \delta_8 \delta_9, \label{eq:k=9simple}
\end{align}
We refer to this relation as $N_9 =\partial_9$.
\begin{figure}[htbp]
	\centering
	\subfigure[The simplified relation $N_9 = \partial_9$. 
	\label{F:k=9simple}]
	{\includegraphics[height=170pt]{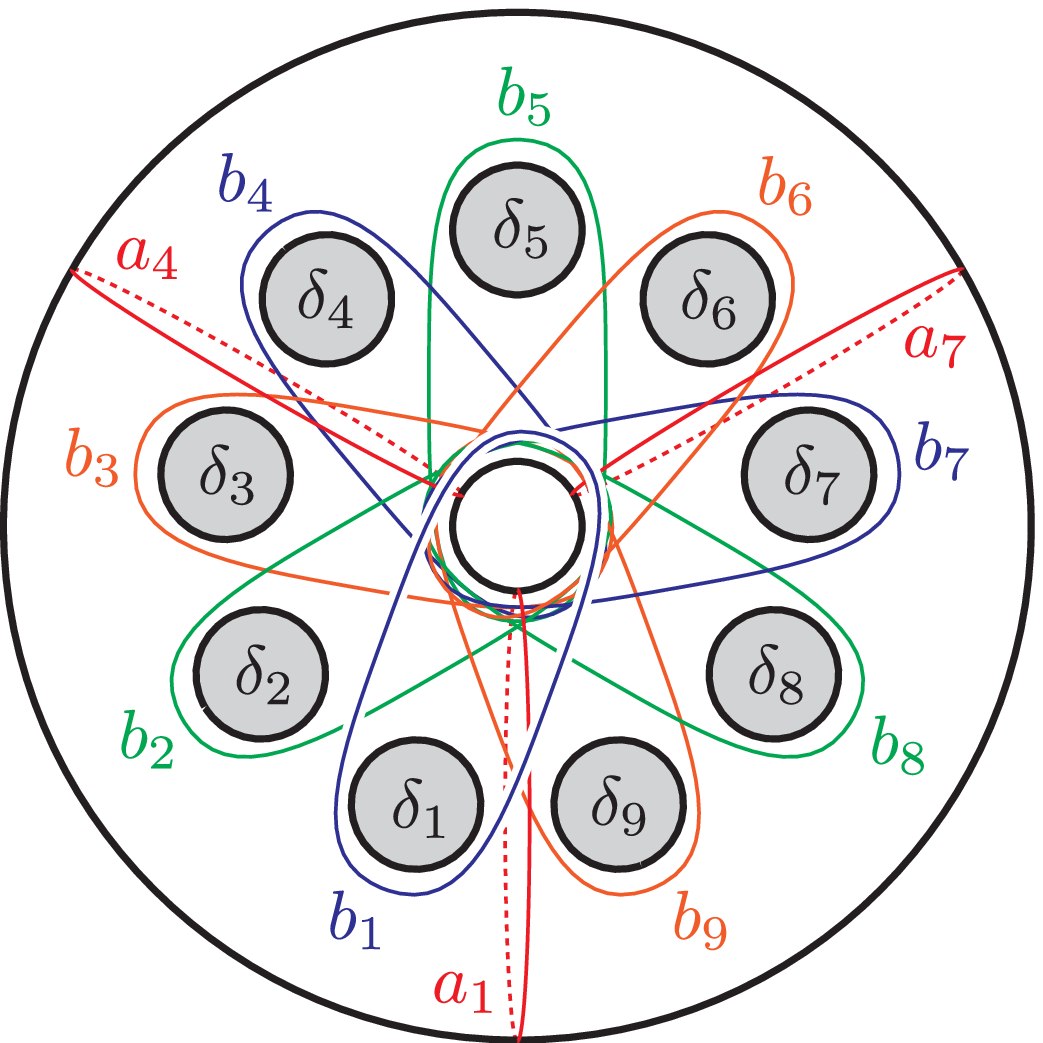}} 
	\hspace{.8em}	
	\subfigure[The original relation. 
	\label{F:k=9KO}]
	{\includegraphics[height=170pt]{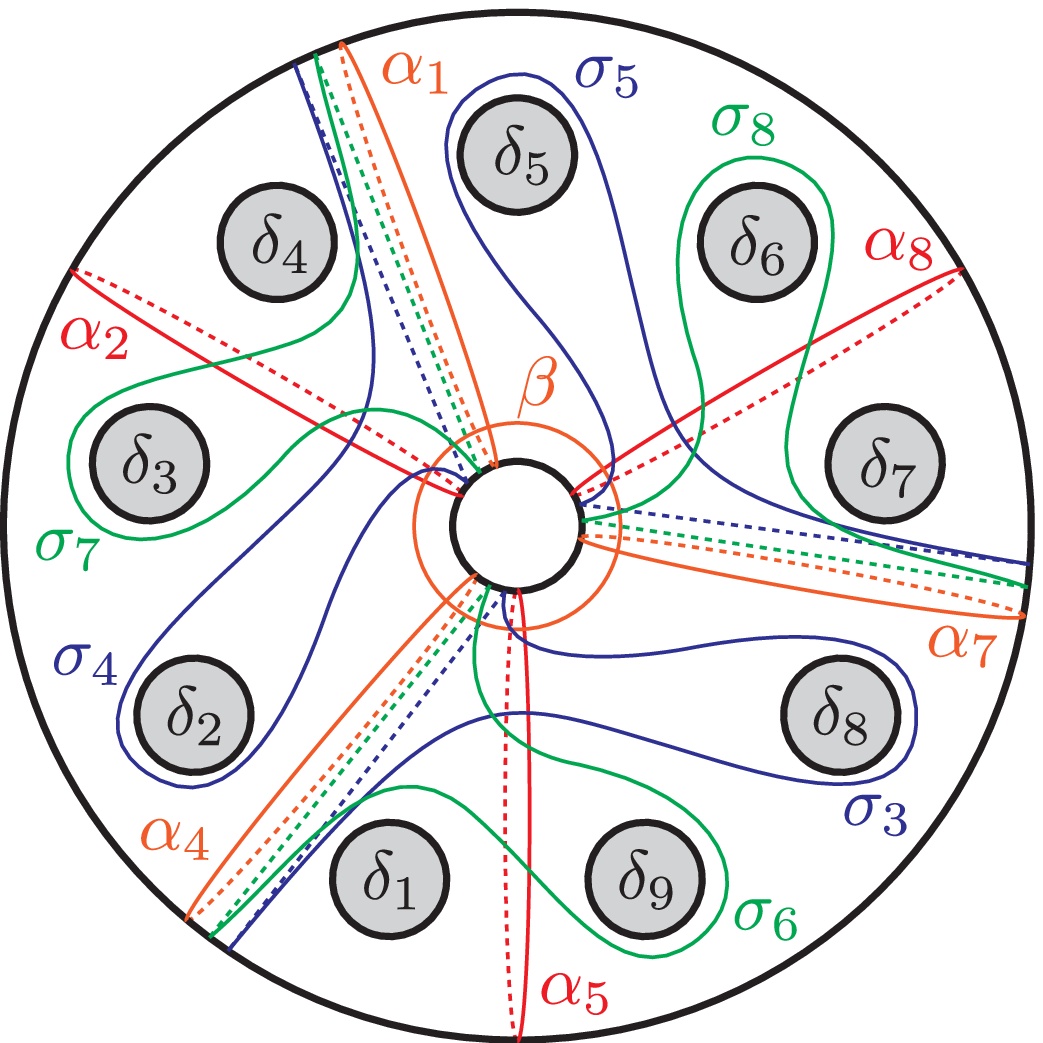}} 
	\caption{The curves for Korkmaz-Ozbagci's $9$-holed torus relation.} 	
\end{figure}

We are now ready for proving the following: 

\begin{theorem}\label{T:isom Korkmaz-Ozbagci}

The monodromy factorization $N_9 =\partial_9$ is Hurwitz equivalent to Korkmaz-Ozbagci's $9$-holed torus relation given in \cite{KorkmazOzbagci2008}.

\end{theorem}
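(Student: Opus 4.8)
The plan is to exhibit an explicit sequence of Hurwitz moves (together with the global conjugation and cyclic permutation that the correspondence between monodromy factorizations and Lefschetz pencils permits) taking our relation $N_9 = \partial_9$ to the precise word written down by Korkmaz and Ozbagci in \cite{KorkmazOzbagci2008}. First I would recall the exact form of Korkmaz-Ozbagci's $9$-holed torus relation and fix an identification of their $9$-holed torus with our $\Sigma_1^9$, so that we are comparing two words in the same mapping class group. The natural choice is to match the boundary curves $\delta_1, \ldots, \delta_9$ on both sides (both relations have $\partial_9$ on the right), which then pins down the identification up to the action of the pure mapping class group preserving the boundary; I would record which curves $a_i$, $b_j$ on our side correspond to their vanishing cycles under this identification, using Figure~\ref{F:k=9simple} and their Figure~\ref{F:k=9KO}.

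The core of the argument is then purely combinatorial. The simplified factorization \eqref{eq:k=9simple} has $12$ vanishing cycles $a_1, b_1, b_2, b_3, a_4, b_4, b_5, b_6, a_7, b_7, b_8, b_9$, and Korkmaz-Ozbagci's relation also has $12$ (it is built from a $2$-chain relation and lantern relations, which combine to $12$ positive twists). I would first check that the two multisets of vanishing cycles agree as unordered collections of isotopy classes — this is a finite picture-chasing task on $\Sigma_1^9$, comparing Figures~\ref{F:k=9simple} and \ref{F:k=9KO} after the chosen boundary identification. Once the multisets match, the remaining task is to reorder: since Hurwitz moves act on the ordered tuple of factors and can realize, in particular, any cyclic permutation (by repeatedly moving the first factor to the end, conjugating the rest), and more generally any transposition of adjacent \emph{commuting} factors without changing them, I would locate a chain of adjacent swaps — inserting conjugating Hurwitz moves where the two adjacent curves do not commute — that carries our ordering into theirs. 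Because both words are genuine monodromy factorizations of pencils with the same total space $\PP^2$ and the same number of critical points and base points, there is a strong structural reason to expect such a chain to exist; concretely I would exploit the "chain + lantern" decomposition on their side and the "lifts of meridian/longitude" structure on ours to organize the moves into blocks of three.

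The main obstacle will be the bookkeeping in the reordering step: Hurwitz moves do not merely permute the factors, they conjugate, so one cannot naively sort by adjacent transpositions unless one tracks how each curve is transformed at every step. I would manage this exactly as in the derivation of \eqref{eq:k=9simple} above, by underlining the pair being swapped at each stage and naming the conjugated curves (e.g. $c' = {}_{\bar c''}(c)$), and by choosing the order of swaps so that conjugations are applied to curves that are disjoint from (hence fixed by) the conjugating twist whenever possible; each such claim of disjointness is a local check on the surface picture. A secondary subtlety is that the identification of the two $9$-holed tori is only canonical up to the boundary-fixing mapping class group, so I would either argue that the relevant ambiguity is absorbed by a global conjugation (which is allowed in the notion of isomorphism of pencils and hence of Hurwitz equivalence of monodromy factorizations up to simultaneous conjugation), or pin it down by additionally matching one non-boundary curve. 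I expect the verification that the two multisets of vanishing cycles coincide to be the step requiring the most care, since it is where the geometry of our braid-monodromy computation must be reconciled with Korkmaz-Ozbagci's hand-built relation; after that, the reordering, while tedious, is mechanical.
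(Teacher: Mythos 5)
Your overall strategy --- exhibiting an explicit chain of Hurwitz moves between the two $12$-term words --- is the paper's strategy, and your remarks about tracking conjugations are on point. But the organizing principle you build the argument around, namely first verifying that the two factorizations have the \emph{same unordered multiset} of vanishing cycles and then merely ``sorting'' by adjacent swaps, would fail at its first step: the multisets do not coincide. Korkmaz--Ozbagci's word is $\beta_4 \sigma_3 \sigma_6 \alpha_5 \beta_1 \sigma_4 \sigma_7 \alpha_2 \beta_7 \sigma_5 \sigma_8 \alpha_8$, and the curves of $N_9$ are recovered only \emph{after} conjugating several of these factors: the paper's computation pushes each $\beta_i$ rightward past its two $\sigma$'s and then past the following $\alpha_j$, producing the word
\[
\alpha_5\, {}_{\bar{\alpha}_5}\!(\beta_4)\, {}_{\beta_1}\!(\sigma_4)\, {}_{\beta_1}\!(\sigma_7)\, \alpha_2\, {}_{\bar{\alpha}_2}\!(\beta_1)\, {}_{\beta_7}\!(\sigma_5)\, {}_{\beta_7}\!(\sigma_8)\, \alpha_8\, {}_{\bar{\alpha}_8}\!(\beta_7)\, {}_{\beta_4}\!(\sigma_3)\, {}_{\beta_4}\!(\sigma_6),
\]
whose entries (not the original $\sigma_j$, $\beta_i$) are the curves $a_i, b_j$ of Figure~\ref{F:k=9simple}. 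Since, e.g., $\sigma_4$ meets $\beta_1$, the curve ${}_{\beta_1}\!(\sigma_4)$ is a different isotopy class from $\sigma_4$, so the picture-chasing comparison of unordered collections that you flag as ``the step requiring the most care'' cannot succeed as stated, and it is in any case neither necessary nor sufficient for Hurwitz equivalence.

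Once you drop that step, your fallback --- insert conjugating Hurwitz moves where adjacent factors fail to commute and track the resulting curves --- is exactly what is needed, and the actual computation is much shorter than your plan anticipates: three blocks of moves of the form $\beta\sigma\sigma' \sim {}_{\beta}(\sigma)\,{}_{\beta}(\sigma')\,\beta$ followed by $\beta\alpha \sim \alpha\,{}_{\bar{\alpha}}(\beta)$, then one cyclic permutation, after which the identification with $a_1 b_1 b_2 b_3 a_4 b_4 b_5 b_6 a_7 b_7 b_8 b_9$ is a term-by-term check of pictures. Your concern about the ambiguity of identifying the two $9$-holed tori is handled simply by drawing Korkmaz--Ozbagci's curves on the same standard $\Sigma_1^9$ used for $N_9$ (Figures~\ref{F:k=9simple} and \ref{F:k=9KO}); no appeal to global conjugation is needed.
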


\begin{proof}
With the curves shown in Figure~\ref{F:k=9KO}, Korkmaz and Ozbagci~\cite{KorkmazOzbagci2008} gave the $9$-holed torus relation
\begin{align}
\beta_4 \sigma_3 \sigma_6 \alpha_5 \beta_1 \sigma_4 \sigma_7 \alpha_2 \beta_7 \sigma_5 \sigma_8 \alpha_8
= \delta_1 \delta_2\delta_3 \delta_4 \delta_5 \delta_6 \delta_7 \delta_8 \delta_9,
\end{align}
where $\beta_4 = {}_{\alpha_4}\!(\beta)$, $\beta_1 = {}_{\alpha_1}\!(\beta)$ and $\beta_7 = {}_{\alpha_7}\!(\beta)$.
We modify this relation as follows:
\begin{align*}
\delta_1 \delta_2\delta_3 \delta_4 \delta_5 \delta_6 \delta_7 \delta_8 \delta_9
&= \underline{\beta_4 \sigma_3 \sigma_6} \alpha_5 \underline{\beta_1 \sigma_4 \sigma_7} \alpha_2 \underline{\beta_7 \sigma_5 \sigma_8} \alpha_8 \\
&\sim {}_{\beta_4}\!(\sigma_3) {}_{\beta_4}\!(\sigma_6) \underline{\beta_4 \alpha_5} {}_{\beta_1}\!(\sigma_4) {}_{\beta_1}\!(\sigma_7) \underline{\beta_1 \alpha_2} {}_{\beta_7}\!(\sigma_5) {}_{\beta_7}\!(\sigma_8) \underline{\beta_7 \alpha_8} \\
&\sim {}_{\beta_4}\!(\sigma_3) {}_{\beta_4}\!(\sigma_6) \alpha_5 {}_{\bar{\alpha}_5}\!(\beta_4) {}_{\beta_1}\!(\sigma_4) {}_{\beta_1}\!(\sigma_7) \alpha_2 {}_{\bar{\alpha}_2}\!(\beta_1) {}_{\beta_7}\!(\sigma_5) {}_{\beta_7}\!(\sigma_8) \alpha_8 {}_{\bar{\alpha}_8}\!(\beta_7) \\
&\sim \alpha_5 {}_{\bar{\alpha}_5}\!(\beta_4) {}_{\beta_1}\!(\sigma_4) {}_{\beta_1}\!(\sigma_7) \alpha_2 {}_{\bar{\alpha}_2}\!(\beta_1) {}_{\beta_7}\!(\sigma_5) {}_{\beta_7}\!(\sigma_8) \alpha_8 {}_{\bar{\alpha}_8}\!(\beta_7) {}_{\beta_4}\!(\sigma_3) {}_{\beta_4}\!(\sigma_6).
\end{align*}
It is straightforward to see that the last expression coincides with the factorization $N_9$.
\end{proof}

\subsubsection{Monodromy of $f_s$}
\begin{figure}[htbp]
	\centering
	\includegraphics[height=170pt]{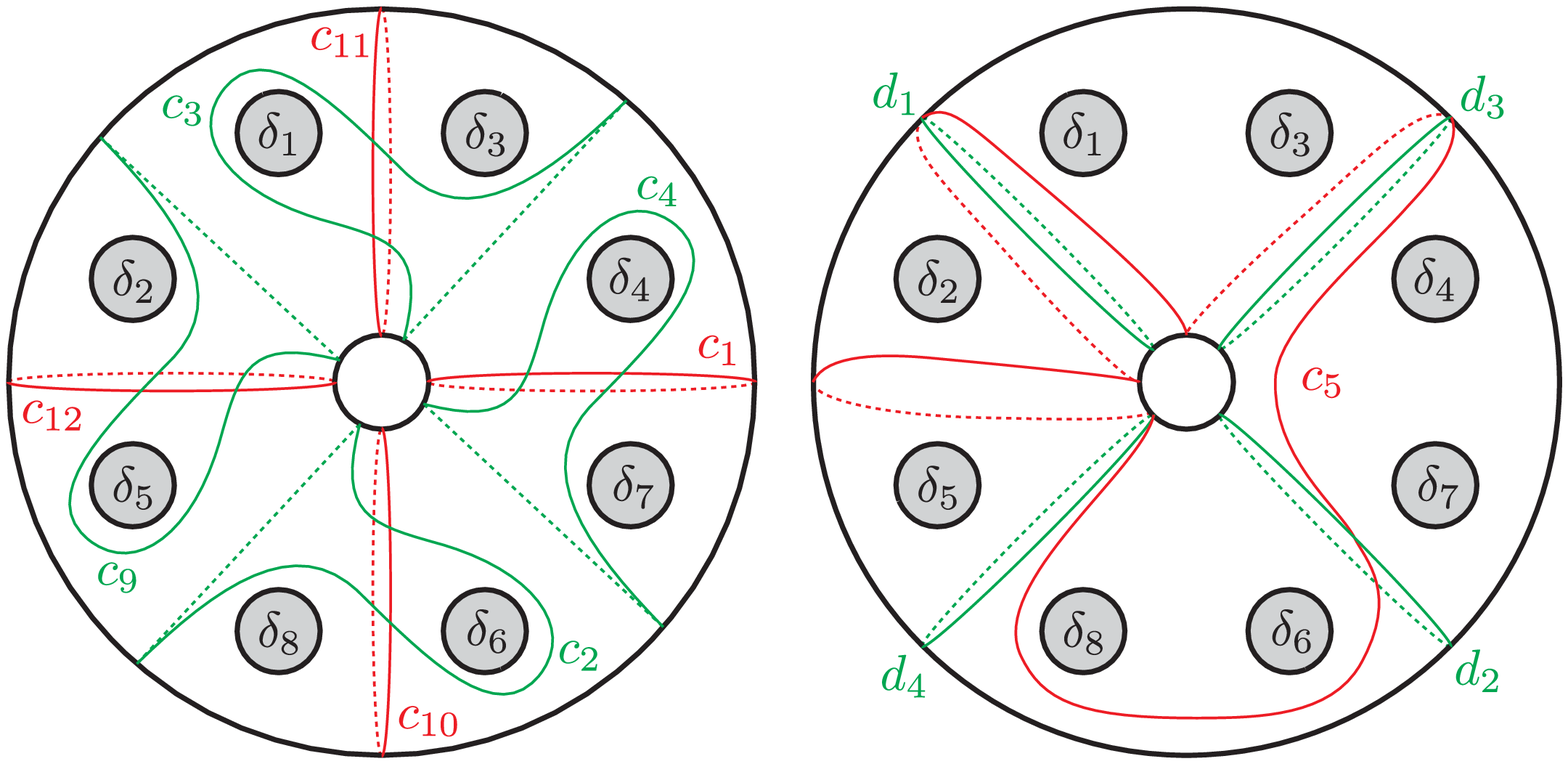}
	\caption{Redrawing of the vanishing cycles of $f_s$ on a standard $8$-holed torus $\Sigma_1^8$.}
	\label{F:k=8fs}
\end{figure}
\begin{figure}[htbp]
	\centering
	\includegraphics[height=170pt]{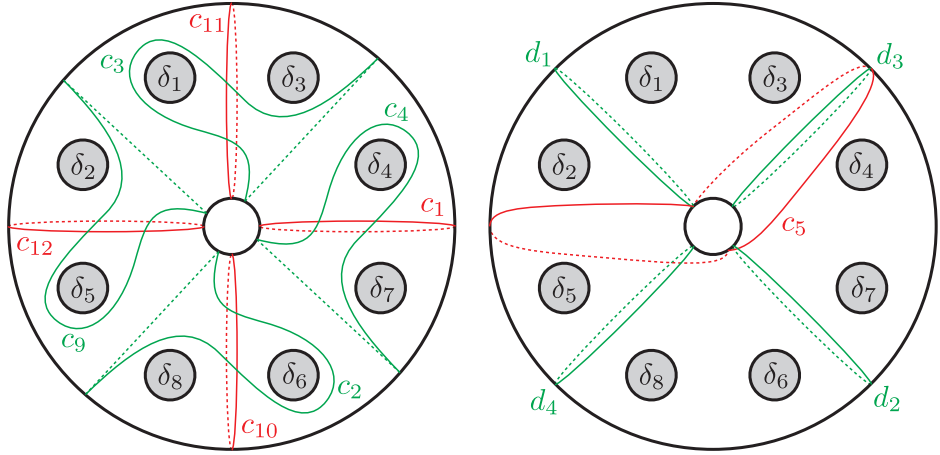}
	\caption{Vanishing cycles after the global conjugation by $\bar{d}_1 \bar{d}_4 d_2$.}
	\label{F:k=8fs_repositioned}
\end{figure}
A monodromy factorization of $f_s$ was computed in Section~\ref{S:VC pencil Us} as
\begin{align*}
c_{12} c_{11} c_{10} c_{9} c_{8} c_{7} c_{6} c_{5} c_{4} c_{3} c_{2} c_{1} = \delta_1 \delta_2\delta_3 \delta_4 \delta_5 \delta_6 \delta_7 \delta_8 
\end{align*} 
with the vanishing cycles found in Figure~\ref{F:vanishing cycle Us} where $c_6 = {}_{\bar{d}_3 \bar{d}_4}(c_5)$, $c_7 = {}_{\bar{d}_1 \bar{d}_2}(c_5)$, and $c_8 = {}_{\bar{d}_1 \bar{d}_2 \bar{d}_3 \bar{d}_4}(c_5)$. The curves are redrawn on a standardly positioned torus in Figure~\ref{F:k=8fs}.
We perform the global conjugation by $\bar{d}_1 \bar{d}_4 d_2$ to put the vanishing cycles as in Figure~\ref{F:k=8fs_repositioned}.
For simplicity, we keep using the same labeling $c_i$ for the resulting curves.
We then transform the factorization as follows.

\begin{align*}
\delta_1 \delta_2\delta_3 \delta_4 \delta_5 \delta_6 \delta_7 \delta_8 
&= c_{12} c_{11} c_{10} c_{9} c_{8} c_{7} c_{6} c_{5} c_{4} c_{3} c_{2} c_{1} \\
&\sim c_4 c_1 c_2 \underline{c_{10} c_{9} c_{8} c_{7}} c_{6} \underline{c_{5} c_{12} c_{3}} c_{11} \\
&\sim c_4 \underline{c_1 c_2 c_{8}^\prime {}_{c_{10} c_9}\!(c_7)} c_{10} c_{9} c_{6} c_{12} c_3 c_5^\prime c_{11} \\
&\sim 
\underline{c_4 c_7^{\prime} c_1} \cdot 
\underline{c_2 c_8^{\prime} c_{10}} \cdot 
\underline{c_{9} c_{6} c_{12}} \cdot
\underline{c_{3} c_5^\prime c_{11}} \\
&\sim 
c_7^{\prime} c_1 {}_{\bar{c}_1 \bar{c}_7^\prime}\!(c_4) \cdot 
c_8^{\prime} c_{10} {}_{\bar{c}_{10} \bar{c}_8^\prime}\!(c_2) \cdot 
c_{6} c_{12} {}_{\bar{c}_{12} \bar{c}_6}\!(c_{9}) \cdot
c_5^\prime c_{11} {}_{\bar{c}_{11} \bar{c}_5^\prime}\!(c_{3}) 
\end{align*}
where 
$c_5^\prime = {}_{\bar{c}_3 \bar{c}_{12}}\!(c_5)$, 
$c_8^\prime ={}_{c_{10}c_9}\!(c_8)$, 
$c_7^{\prime}={}_{c_1 c_2 c_8^\prime c_{10} c_9}\!(c_7)$.
The resulting curves are as depicted in Figure~\ref{F:k=8Tsimple} and the last expression is $a_1 b_1 b_2 a_3 b_3 b_4 a_5 b_5 b_6 a_7 b_7 b_8$ up to labeling and a permutation.
Thus, we obtain the simpler monodromy factorization of $f_s$:
\begin{align}
a_1 b_1 b_2 a_3 b_3 b_4 a_5 b_5 b_6 a_7 b_7 b_8 = \delta_1 \delta_2\delta_3 \delta_4 \delta_5 \delta_6 \delta_7 \delta_8, \label{eq:k=8Tsimple}
\end{align}
We refer to this relation as $S_8 = \partial_8$.
\begin{figure}[htbp]
	\centering
	\subfigure[The simplified relation $S_8 = \partial_8$. 
	\label{F:k=8Tsimple}]
	{\includegraphics[height=170pt]{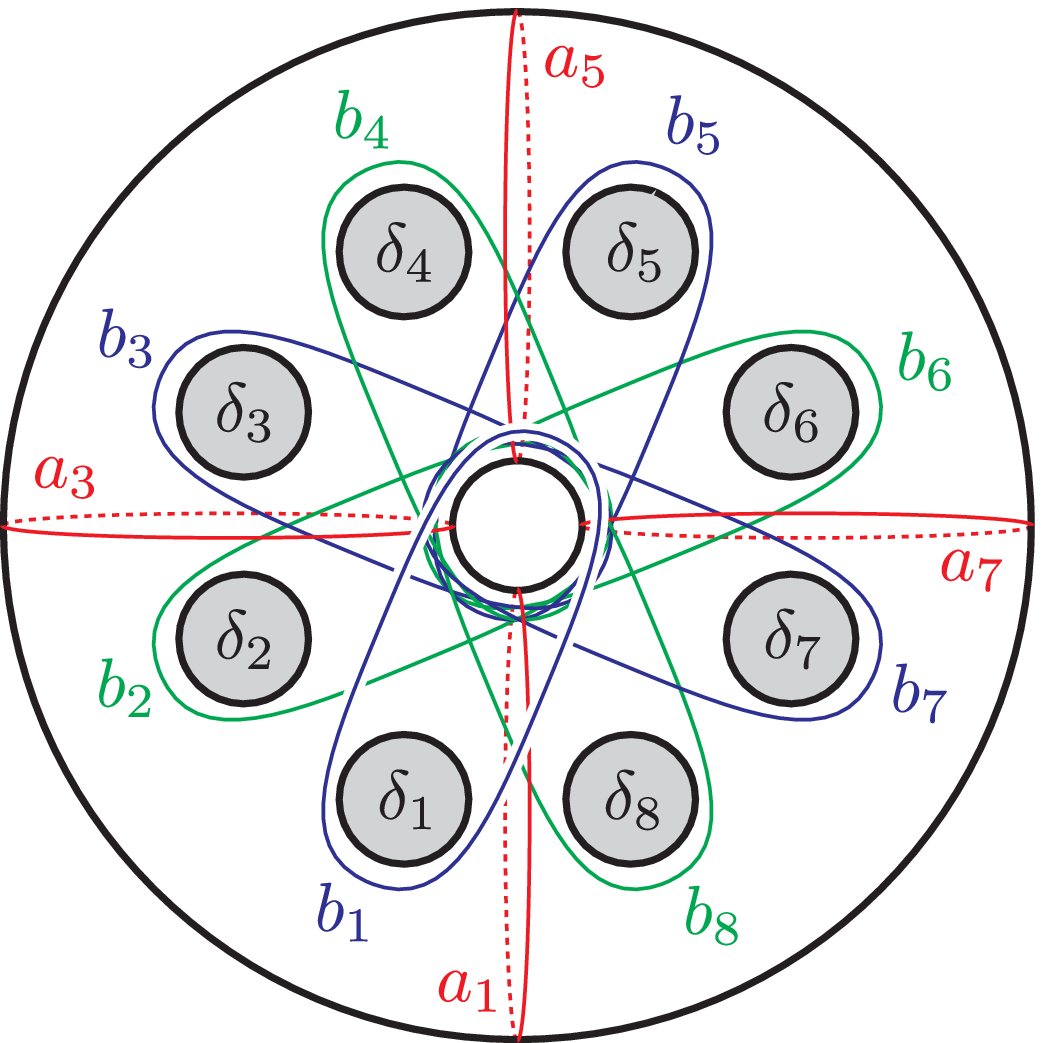}} 
	\hspace{.8em}	
	\subfigure[The original relation. 
	\label{F:k=8T}]
	{\includegraphics[height=170pt]{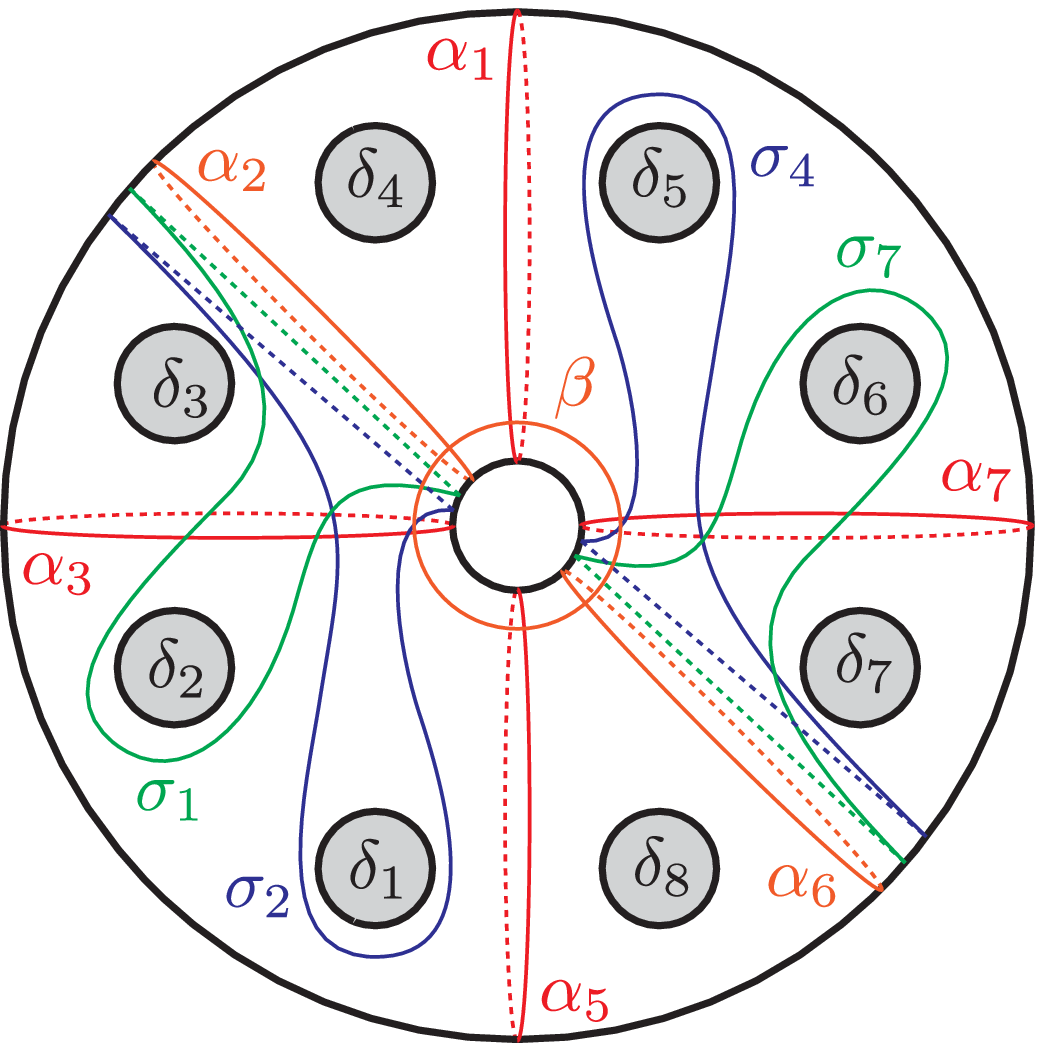}} 
	\caption{The curves for Tanaka's $8$-holed torus relation.} 	
\end{figure}

\begin{theorem}\label{T:isom Tanaka}

The monodromy factorization $S_8 = \partial_8$ is Hurwitz equivalent to Tanaka's $8$-holed torus relation given in \cite{Tanaka2012}.

\end{theorem}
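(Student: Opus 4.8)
The plan is to mirror the strategy used for Theorem~\ref{T:isom Korkmaz-Ozbagci}: start from Tanaka's explicit $8$-holed torus relation as presented in \cite{Tanaka2012} with the curves recorded in Figure~\ref{F:k=8T}, and transform it by a sequence of Hurwitz moves (together with possibly one global conjugation) into the factorization $S_8 =\partial_8$ obtained in \eqref{eq:k=8Tsimple}, whose curves are displayed in Figure~\ref{F:k=8Tsimple}. Since Hurwitz equivalence is the desired conclusion, and we have already identified $S_8=\partial_8$ with the monodromy factorization of $f_s$, this suffices.

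First I would write out Tanaka's relation in the notation of this section, expressing each of its twelve Dehn-twist factors as a twist along an explicitly drawn curve on $\Sigma_1^8$; Tanaka builds the relation by inserting a lantern relation and a $2$-chain relation into a simpler word, so several of the factors are conjugates of "basic" curves (meridian/longitude lifts) by a small product of Dehn twists, exactly as in the Korkmaz--Ozbagci case. Next I would group the factors into blocks reflecting that construction (one block coming from each inserted relation) and use Hurwitz moves within and between blocks to peel off the conjugating twists: a move of the form $\underline{\beta\,\gamma} \sim {}_{\beta}\!(\gamma)\,\beta$ repeatedly pushes a conjugating twist to the right until it lands next to a factor it cancels or recombines with, precisely the mechanism used in the computation preceding Theorem~\ref{T:isom Korkmaz-Ozbagci}. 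After these moves the word should become a product of twists along curves that are visibly lifts of the meridian $a$ and longitude $b$ of the torus, organized in the $a_1 b_1 b_2 a_3 b_3 b_4 a_5 b_5 b_6 a_7 b_7 b_8$ pattern (up to relabeling and a cyclic permutation, the latter being realized by further Hurwitz moves since the product equals the central element $\partial_8$).

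The main obstacle will be the bookkeeping of the curves under the Hurwitz moves: one must verify that each intermediate curve is isotopic to the one claimed in the figures, and in particular that the conjugated curves such as ${}_{\bar c_{12}\bar c_6}\!(c_9)$ appearing in the $f_s$ computation match, after the transformations, the corresponding curves coming out of Tanaka's word. As in the proof of Theorem~\ref{T:isom Korkmaz-Ozbagci}, this is "straightforward" in the sense of being a finite, checkable computation on pictures, but it is the part that genuinely requires care — there is freedom in the choice of Hurwitz path system and in global conjugation, and one needs to fix these so that the two words become literally equal rather than merely "of the same shape." Once the two words agree on the nose, the theorem follows immediately.
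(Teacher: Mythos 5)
Your proposal follows essentially the same route as the paper: take Tanaka's relation with its explicit curves on $\Sigma_1^8$, apply Hurwitz moves of the form $\beta\gamma \sim {}_{\beta}\!(\gamma)\,\beta$ to peel the conjugating twists off the factors $\beta_{\bar 6},\beta_2,\beta_{\bar 2},\beta_6$ and the $\sigma_i$'s, and then match the resulting word (after a cyclic permutation) with $S_8$. The paper carries this out as a short explicit computation with no global conjugation needed, and your outline, once the bookkeeping of curves is done as you describe, is exactly that argument.
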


\begin{proof}
With the curves shown in Figure~\ref{F:k=8T}, Tanaka~\cite{Tanaka2012} gave the $8$-holed torus relation
\begin{align}
\alpha_5 \alpha_7 \beta_{\bar{6}} \beta_2 \sigma_2 \sigma_1 \alpha_1 \alpha_3 \beta_{\bar{2}} \beta_6 \sigma_4 \sigma_7
= \delta_1 \delta_2\delta_3 \delta_4 \delta_5 \delta_6 \delta_7 \delta_8,
\end{align}
where $\beta_{\bar{6}} = {}_{\bar{\alpha}_6}\!(\beta)$, $\beta_2 = {}_{\alpha_2}\!(\beta)$, $\beta_{\bar{2}} = {}_{\bar{\alpha}_2}\!(\beta)$ and $\beta_6 = {}_{\alpha_6}\!(\beta)$.
We modify this relation as follows:
\begin{align*}
\delta_1 \delta_2\delta_3 \delta_4 \delta_5 \delta_6 \delta_7 \delta_8 
&= \underline{\alpha_5 \alpha_7} \beta_{\bar{6}} \beta_2 \sigma_2 \sigma_1 \underline{\alpha_1 \alpha_3} \beta_{\bar{2}} \beta_6 \sigma_4 \sigma_7 \\
&\sim \alpha_7 \underline{\alpha_5 \beta_{\bar{6}}} \beta_2 \sigma_2 \sigma_1 \alpha_3 \underline{\alpha_1 \beta_{\bar{2}}} \beta_6 \sigma_4 \sigma_7 \\
&\sim \alpha_7 {}_{\alpha_5}\!(\beta_{\bar{6}}) \alpha_5 \underline{\beta_2 \sigma_2 \sigma_1} \alpha_3  {}_{\alpha_1}\!(\beta_{\bar{2}}) \alpha_1 \underline{\beta_6 \sigma_4 \sigma_7} \\
&\sim \alpha_7 {}_{\alpha_5}\!(\beta_{\bar{6}}) \alpha_5  {}_{\beta_2}\!(\sigma_2) {}_{\beta_2}\!(\sigma_1) \beta_2 \alpha_3  {}_{\alpha_1}\!(\beta_{\bar{2}}) \alpha_1  {}_{\beta_6}\!(\sigma_4) {}_{\beta_6}\!(\sigma_7) \beta_6 \\
&\sim \alpha_5  {}_{\beta_2}\!(\sigma_2) {}_{\beta_2}\!(\sigma_1) \underline{\beta_2 \alpha_3}  {}_{\alpha_1}\!(\beta_{\bar{2}}) \alpha_1  {}_{\beta_6}\!(\sigma_4) {}_{\beta_6}\!(\sigma_7) \underline{\beta_6 \alpha_7} {}_{\alpha_5}\!(\beta_{\bar{6}}) \\
&\sim \alpha_5  {}_{\beta_2}\!(\sigma_2) {}_{\beta_2}\!(\sigma_1)  \alpha_3  {}_{\bar{\alpha}_3}\!(\beta_2)  {}_{\alpha_1}\!(\beta_{\bar{2}}) \alpha_1  {}_{\beta_6}\!(\sigma_4) {}_{\beta_6}\!(\sigma_7) \alpha_7 {}_{\bar{\alpha}_7}\!(\beta_6)  {}_{\alpha_5}\!(\beta_{\bar{6}}).
\end{align*}
The last expression coincides with $S_8$.
\end{proof}

\subsection{Monodromy and uniqueness of the non-minimal pencils}
By blowing-up some of the base points of $f_n$ or $f_s$ we obtain a non-minimal holomorphic Lefschetz pencil. In terms of monodromy factorization this corresponds to capping boundary components of $N_9 = \partial_9$ or $S_8 = \partial_8$ with disks and obtaining a $k$-holed torus relation with smaller $k$. The question is whether the resulting pencil is (smoothly) determined only by the number of blow-ups and independent of a particular set of base points that we blow-up. We prove that the answer is affirmative by providing a ``standard" $k$-holed torus relation $N_k = \partial_k$ for each $k \leq 8$ and showing the blow-up of any one base point of $N_k = \partial_k$, or additionally $S_8 = \partial_8$ when $k=8$, is Hurwitz equivalent to $N_{k-1}$.

The next lemma summarizes the techniques that we will repeatedly use in the Hurwitz equivalence computations. 
\begin{lemma} \label{lem:CommonTechniques}
	Consider the curves $a_i, b_i, b$ in the $k$-holed torus $\Sigma_1^k$ as in Figure~\ref{F:k=general}. Then the following relations between Dehn twists in $\MCG(\Sigma_1^k)$ are achieved by Hurwitz moves.
	\begin{enumerate}
		\item $b b_i \sim b_i b$, $a_i a_j \sim a_j a_i$.
		\item $a_i b a_i \sim b a_i b$.
		\item $b a_i b_i \sim a_i b_i a_{i+1} \sim b_i a_{i+1} b \sim a_{i+1} b a_i$.
	\end{enumerate}
	Here the indices are taken modulo $k$.
\end{lemma}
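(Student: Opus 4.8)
The plan is to verify each relation by exhibiting the curves concretely on $\Sigma_1^k$ as drawn in Figure~\ref{F:k=general} and then recognizing that the asserted products of Dehn twists are instances of well-known relations in the mapping class group, or else follow from the elementary fact that disjoint curves give commuting twists. The overarching observation is that, since $b$ is (a lift of) the longitude and each $b_i$ a lift of the meridian running through the $i$-th hole, while the $a_i$ are the meridional circles separating consecutive holes, the geometric intersection numbers among these curves are all $0$ or $1$, and the small configurations involved are standard.

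First I would dispatch part (1): the curve $b$ is disjoint from each $b_i$ (they are parallel copies of the longitude and the $i$-th meridian loop, arranged not to meet in the figure), and likewise $a_i$ is disjoint from $a_j$ for $i\ne j$. Hence $t_b t_{b_i}=t_{b_i}t_b$ and $t_{a_i}t_{a_j}=t_{a_j}t_{a_i}$ hold as equalities in $\MCG(\Sigma_1^k)$, and an equality of products is in particular a (trivial) Hurwitz move. For part (2), $a_i$ and $b$ intersect transversely in a single point, so the pair $(a_i,b)$ fills a one-holed torus (or at least a regular neighborhood that is a one-holed torus), and on such a subsurface the braid relation $t_{a_i}t_b t_{a_i}=t_b t_{a_i}t_b$ is the classical $2$-chain/braid relation for two curves meeting once; this is again an honest equality, hence trivially realized by Hurwitz moves. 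For part (3) I would first note that $a_i, b_i, a_{i+1}, b$ form a chain: $a_i$ meets $b_i$ once, $b_i$ meets $a_{i+1}$ once, $a_{i+1}$ meets $b$ once, $b$ meets $a_i$ once, and the remaining pairs are disjoint, so a neighborhood is a $4$-chain whose associated twists satisfy the braid relations $t_x t_y t_x = t_y t_x t_y$ for adjacent $x,y$ and commutation for non-adjacent ones; the four displayed products $b a_i b_i$, $a_i b_i a_{i+1}$, $b_i a_{i+1} b$, $a_{i+1} b a_i$ are then related to one another by Hurwitz moves exactly as the cyclic "rotations" of a chain relation are (each step is one Hurwitz move transforming, e.g., $xyz$ into $y\,({}_{\bar y}x)\,z$ and using a braid relation to simplify).

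The only mildly delicate point — and the step I expect to be the main obstacle — is making sure that the indices taken modulo $k$ really do close up correctly in the figure, i.e.\ that the curves $a_{k+1}=a_1$, $b_{k+1}=b_1$ are positioned so that the chain $a_k,b_k,a_1,b$ has the same intersection pattern as a chain in the interior; this requires reading Figure~\ref{F:k=general} carefully to confirm that $b$ passes through all the holes in a way that makes it meet every $a_i$ once and no $b_i$ at all, and that the cyclic ordering of the holes is the one implicit in the statement. Once the intersection graph is confirmed to be the expected chain/cycle, every assertion reduces to the standard braid and commutation relations among Dehn twists, together with the trivial observation that an equality of words is realized by the empty sequence of Hurwitz moves; I would therefore present the proof as: (i) read off the intersection numbers from Figure~\ref{F:k=general}; (ii) invoke the braid relation $t_\alpha t_\beta t_\alpha = t_\beta t_\alpha t_\beta$ for $i(\alpha,\beta)=1$ and commutativity for $i(\alpha,\beta)=0$; (iii) for part (3), chain together the three resulting braid moves to pass between the four displayed words.
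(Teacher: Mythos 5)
Your handling of parts (1) and (2) is essentially correct, but the justification ``an equality of products is in particular a (trivial) Hurwitz move'' is a misconception you should excise: two factorizations with the same product need \emph{not} be Hurwitz equivalent, and what actually saves you here is that the required moves can be exhibited. For disjoint curves the conjugate ${}_{t_\alpha}(t_\beta)$ equals $t_\beta$, so a single Hurwitz move transposes the two letters; for $i(\alpha,\beta)=1$ the chain $t_\alpha t_\beta t_\alpha \sim t_\alpha\,{}_{t_\beta}(t_\alpha)\,t_\beta \sim t_\beta t_\alpha t_\beta$ is a formal consequence of the braid relation, so (2) is fine. (A small slip: the $b_i$ are lifts of the \emph{longitude}, not the meridian --- capping $\delta_i$ turns $b_i$ into $b$ --- but the intersection numbers you use are the correct ones.)

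The genuine gap is in part (3). The curves $a_i, b_i, a_{i+1}, b$ form a closed $4$-cycle, and the asserted equivalences are \emph{not} formal consequences of the braid and commutation relations attached to that intersection pattern. Since Hurwitz moves preserve the product, $b\,a_i\,b_i \sim a_i\,b_i\,a_{i+1}$ already forces $t_b=(t_{a_i}t_{b_i})\,t_{a_{i+1}}\,(t_{a_i}t_{b_i})^{-1}=t_{t_{a_i}t_{b_i}(a_{i+1})}$, i.e.\ the curve identity $t_{a_i}t_{b_i}(a_{i+1})=b$. In the Artin group of affine type $\tilde A_3$ attached to a $4$-cycle the corresponding relation $c_4={}_{c_1c_2}(c_3)$ does \emph{not} hold; it is exactly the kind of extra geometric relation that makes the mapping class group a proper quotient of the Artin group, and it is what ultimately produces, e.g., the star relation $(a_1a_2a_3b)^3=\partial_3$, which is not derivable from braid relations alone. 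So the real content of part (3) is the picture-check, on Figure~\ref{F:k=general}, that $t_{a_i}t_{b_i}(a_{i+1})$ is isotopic to $b$ (equivalently $t_{b_i}^{-1}t_{a_i}^{-1}(b)=a_{i+1}$); once that single identity is verified, the other three equalities of products follow from it using the braid and commutation relations, and each $\sim$ in the statement is then realized by two Hurwitz moves exactly as in part (2). Your proposal never performs this verification and instead claims the conclusion follows from relations that do not imply it; this geometric check is precisely what the authors are waving at with ``the verification is easy.''
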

The verification is easy.

\subsubsection{One-time blow-up of $f_n$ and the $8$-holed torus relation $N_8 = \partial_8$}
\label{section:N_8}
We consider the $9$-holed torus relation $N_9 = \partial_9$ with the curves in Figure~\ref{F:k=9simple} (or Figure~\ref{F:k=general}) and cap one of the boundary components.

\noindent
{\bf Case 1}: Capping $\delta_9$.
This yields the relation 
\begin{align}
a_1 b_1 b_2 b_3 a_4 b_4 b_5 b_6 a_7 b_7 b_8 b = \delta_1 \delta_2\delta_3 \delta_4 \delta_5 \delta_6 \delta_7 \delta_8
\label{eq:N9WithDelta9CappedOff}
\end{align}
in $\MCG(\Sigma_1^8)$ where the curves are now understood to lie in $\Sigma_1^8$ as in Figure~\ref{F:k=general} with $k=8$ (see also Figure~\ref{F:k=8KOsimple}).
Notice that the curve $b_9$ becomes the central longitude $b$ as the boundary $\delta_9$ disappears.
We modify the relation as follows.
\begin{align*}
\delta_1 \delta_2\delta_3 \delta_4 \delta_5 \delta_6 \delta_7 \delta_8 
&= a_1 b_1 b_2 b_3 a_4 b_4 b_5 b_6 a_7 b_7 b_8 b \\
&\sim \underline{b a_1 b_1} b_2 b_3 a_4 b_4 b_5 b_6 a_7 b_7 b_8 \\
&\sim a_1 b_1 a_2 b_2 b_3 a_4 b_4 b_5 b_6 a_7 b_7 b_8.
\end{align*}
So we have the $8$-holed torus relation
\begin{align}
a_1 b_1 a_2 b_2 b_3 a_4 b_4 b_5 b_6 a_7 b_7 b_8 = \delta_1 \delta_2\delta_3 \delta_4 \delta_5 \delta_6 \delta_7 \delta_8, \label{eq:k=8simple}
\end{align}
to which we refer as $N_8 = \partial_8$. 
\begin{figure}[htbp]
	\centering
	\includegraphics[height=170pt]{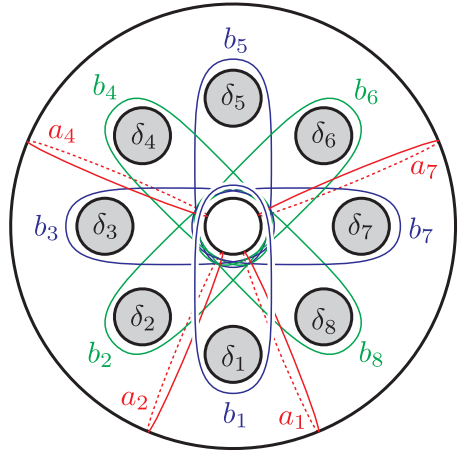}
	\caption{The relation $N_8 = \partial_8$.}
	\label{F:k=8KOsimple}
\end{figure}

\noindent
{\bf Case 2}: Capping $\delta_8$ or $\delta_7$.
Instead of $\delta_9$, we now cap $\delta_8$ or $\delta_7$ of $N_9 = \partial_9$. Then, after relabeling the curves so that they match the curves in Figure~\ref{F:k=general} with $k=8$, we have
\begin{align*}
a_1 b_1 b_2 b_3 a_4 b_4 b_5 b_6 a_7 b_7 \underline{b b_8} &= \delta_1 \delta_2\delta_3 \delta_4 \delta_5 \delta_6 \delta_7 \delta_8, \\
a_1 b_1 b_2 b_3 a_4 b_4 b_5 b_6 a_7 \underline{b b_7 b_8} &= \delta_1 \delta_2\delta_3 \delta_4 \delta_5 \delta_6 \delta_7 \delta_8. 
\end{align*}
Both of them are clearly equivalent to the relation~\eqref{eq:N9WithDelta9CappedOff}, and hence to $N_8 = \partial_8$, since $b$ commutes with $b_7$ and $b_8$.

\noindent
{\bf Case 3}: The other boundary components $\delta_1, \delta_2, \cdots, \delta_6$.
We can take advantage of the symmetry that the relation $N_9 = \partial_9$ possesses and reduce to the cases we have already discussed.
In Figure~\ref{F:k=9simple}, consider the clockwise rotation $r$ by $2 \pi/3$ about the axis perpendicular to the page and through the center of the figure. This diffeomorphism maps $(a_i, b_i, \delta_i)$ to $(a_{i+3}, b_{i+3}, \delta_{i+3})$, where the indices are taken modulo $9$. 
Then, via the rotation $r$ the relation $N_9 = \partial_9$ becomes
\begin{align*}
\delta_4 \delta_5 \delta_6 \delta_7 \delta_8 \delta_9 \delta_1 \delta_2\delta_3
&= a_4 b_4 b_5 b_6 a_7 b_7 b_8 b_9 a_1 b_1 b_2 b_3,
\end{align*}
which is just a permutation of $N_9 = \partial_9$.
Therefore, capping $\delta_6$ of $N_9 = \partial_9$ is the same as capping $\delta_9$ of $N_9 = \partial_9$ after applying $r$ and hence it results in the relation $N_8 = \partial_8$.
In the same way, capping $\delta_4$ or $\delta_5$ reduces to capping $\delta_7$ or $\delta_8$, respectively.
If we consider the counterclockwise rotation $r^{-1}$ we can reduce the cases of $\delta_1$, $\delta_2$, and $\delta_3$ to the cases of $\delta_7$, $\delta_8$, or $\delta_9$, respectively.

\subsubsection{Two-time blow-up of $f_n$ and the $7$-holed torus relation $N_7 = \partial_7$}
We take the $8$-holed torus relations $N_8 = \partial_8$ and $S_8 = \partial_8$ and cap each one of the boundary components.

\noindent
{\bf Case 1}: Capping $\delta_8$ or $\delta_7$ of $N_8 = \partial_8$.
They give 
\begin{align*}
\delta_1 \delta_2\delta_3 \delta_4 \delta_5 \delta_6 \delta_7 
&= a_1 b_1 a_2 b_2 b_3 a_4 b_4 b_5 b_6 a_7 \underline{b_7 b}, \\
\delta_1 \delta_2\delta_3 \delta_4 \delta_5 \delta_6 \delta_7 
&= a_1 b_1 a_2 b_2 b_3 a_4 b_4 b_5 b_6 a_7 \underline{b b_7},
\end{align*}
which are clearly equivalent as $b$ commutes with $b_7$. Then, from the second
\begin{align*}
\delta_1 \delta_2\delta_3 \delta_4 \delta_5 \delta_6 \delta_7 
&= a_1 b_1 a_2 b_2 b_3 a_4 b_4 b_5 \underline{b_6 a_7 b} b_7 \\
&\sim a_1 b_1 a_2 b_2 b_3 a_4 b_4 b_5 a_6 b_6 a_7 b_7 \\
&\sim a_7 b_7 a_1 b_1 a_2 b_2 b_3 a_4 b_4 b_5 a_6 b_6.
\end{align*}
Then perform the clockwise rotation by $2\pi/7$, which shifts all the indices by $1$. This results in the following $7$-holed torus relation $N_7 = \partial_7$:
\begin{align}
a_1 b_1 a_2 b_2 a_3 b_3 b_4 a_5 b_5 b_6 a_7 b_7 = \delta_1 \delta_2\delta_3 \delta_4 \delta_5 \delta_6 \delta_7.
\end{align}

\noindent
{\bf Case 2}: Capping $\delta_6$, $\delta_5$, or $\delta_4$ of $N_8 = \partial_8$.
They respectively give
\begin{align*}
\delta_1 \delta_2\delta_3 \delta_4 \delta_5 \delta_6 \delta_7 
&= a_1 b_1 a_2 b_2 b_3 a_4 \underline{b_4 b_5 b} a_6 b_6 b_7, \\
\delta_1 \delta_2\delta_3 \delta_4 \delta_5 \delta_6 \delta_7 
&= a_1 b_1 a_2 b_2 b_3 a_4 \underline{b_4 b b_5} a_6 b_6 b_7, \\
\delta_1 \delta_2\delta_3 \delta_4 \delta_5 \delta_6 \delta_7 
&= a_1 b_1 a_2 b_2 b_3 a_4 \underline{b b_4 b_5} a_6 b_6 b_7,
\end{align*}
which are equivalent to each other. From the first,
\begin{align*}
\delta_1 \delta_2\delta_3 \delta_4 \delta_5 \delta_6 \delta_7
&= a_1 b_1 a_2 b_2 b_3 a_4 b_4 b_5 \underline{b a_6 b_6} b_7 \\
&\sim a_1 b_1 a_2 b_2 b_3 a_4 b_4 b_5 a_6 b_6 a_7 b_7,
\end{align*}
which is the same as the one in Case 1 right before applying the rotation.

\noindent
{\bf Case 3}: Capping $\delta_2$ or $\delta_3$ of $N_8 = \partial_8$. 
They give the equivalent relations
\begin{align*}
\delta_1 \delta_2\delta_3 \delta_4 \delta_5 \delta_6 \delta_7 
&= a_1 b_1 a_2 \underline{b_2 b} a_3 b_3 b_4 b_5 a_6 b_6 b_7, \\
\delta_1 \delta_2\delta_3 \delta_4 \delta_5 \delta_6 \delta_7 
&= a_1 b_1 a_2 \underline{b b_2} a_3 b_3 b_4 b_5 a_6 b_6 b_7.
\end{align*}
From the first,
\begin{align*}
\delta_1 \delta_2\delta_3 \delta_4 \delta_5 \delta_6 \delta_7
&= a_1 b_1 a_2 b_2 \underline{b a_3 b_3} b_4 b_5 a_6 b_6 b_7 \\
&\sim a_1 b_1 a_2 b_2 a_3 b_3 a_4 b_4 b_5 a_6 b_6 b_7.
\end{align*}
By the counterclockwise rotation by $2\pi/7$ we can shift the indices by $-1$, which results in $N_7 = \partial_7$ up to a permutation.

\noindent
{\bf Case 4}: Capping $\delta_1$ of $N_8 = \partial_8$. This yields
\begin{align*}
\delta_1 \delta_2\delta_3 \delta_4 \delta_5 \delta_6 \delta_7 
&= \underline{a_1 b a_1} b_1 b_2 a_3 b_3 b_4 b_5 a_6 b_6 b_7 \\
&\sim \underline{b} a_1 \underline{b b_1 b_2} a_3 b_3 b_4 b_5 a_6 \underline{b_6 b_7} \\
&\sim a_1 b_1 b_2 \underline{b a_3 b_3} b_4 \underline{b_5 a_6 b} b_6 b_7 \\
&\sim a_1 b_1 b_2 a_3 b_3 a_4 b_4 a_5 b_5 a_6 b_6 b_7.
\end{align*}
Shifting the indices by $-3$ (or $+4$) by rotation we see that this is equivalent to $N_7 = \partial_7$.

\noindent
{\bf Case 5}: Capping $\delta_8$ or $\delta_7$ of $S_8 = \partial_8$. They yield the equivalent relations
\begin{align*}
\delta_1 \delta_2\delta_3 \delta_4 \delta_5 \delta_6 \delta_7 
&= a_1 b_1 b_2 a_3 b_3 b_4 a_5 b_5 b_6 a_7 \underline{b_7 b}, \\
\delta_1 \delta_2\delta_3 \delta_4 \delta_5 \delta_6 \delta_7 
&= a_1 b_1 b_2 a_3 b_3 b_4 a_5 b_5 b_6 a_7 \underline{b b_7}.
\end{align*}
From the first,
\begin{align*}
\delta_1 \delta_2\delta_3 \delta_4 \delta_5 \delta_6 \delta_7 
&= \underline{b a_1 b_1} b_2 a_3 b_3 b_4 a_5 b_5 b_6 a_7 b_7 \\
&\sim a_1 b_1 a_2 b_2 a_3 b_3 b_4 a_5 b_5 b_6 a_7 b_7,
\end{align*}
which is exactly the expression $N_7$.

\noindent
{\bf Case 6}: The other boundary components $\delta_1, \delta_2, \cdots, \delta_6$ of $S_8 = \partial_8$.
Observe that the relation $S_8 = \partial_8$ is symmetric with respect to the rotation by $2\pi/4$.
Therefore, in the similar way as Case 3 in~\ref{section:N_8}, we can reduce to the cases of capping $\delta_8$ or $\delta_7$.

Note that from the argument so far we deduce that the blow-up of any two base points of $f_n$ and the blow-up of any one base point of $f_s$ are isomorphic.

\subsubsection{Three-time blow-up of $f_n$ and the $6$-holed torus relation $N_6 = \partial_6$}
We cap each one of the boundary components of $N_7 = \partial_7$.

\noindent
{\bf Case 1}: Capping $\delta_7$.
We get
\begin{align*}
\delta_1 \delta_2\delta_3 \delta_4 \delta_5 \delta_6
&= a_1 b_1 a_2 b_2 a_3 b_3 b_4 a_5 b_5 \underline{b_6 a_1 b} \\
&\sim a_1 b_1 a_2 b_2 a_3 b_3 b_4 a_5 b_5 \underline{a_6 b_6 a_1} \\
&\sim a_1 b_1 a_2 b_2 a_3 b_3 b_4 a_5 \underline{b_5 b} a_6 b_6 \\
&\sim a_1 b_1 a_2 b_2 a_3 b_3 \underline{b_4 a_5 b} b_5 a_6 b_6 \\
&\sim a_1 b_1 a_2 b_2 a_3 b_3 a_4 b_4 a_5 b_5 a_6 b_6.
\end{align*}
Thus, we obtain the following $6$-holed torus relation $N_6 = \partial_6$:
\begin{align}
a_1 b_1 a_2 b_2 a_3 b_3 a_4 b_4 a_5 b_5 a_6 b_6
= \delta_1 \delta_2\delta_3 \delta_4 \delta_5 \delta_6.
\end{align}

\noindent
{\bf Case 2}: Capping $\delta_6$ or $\delta_5$.
They give the equivalent relations
\begin{align*}
\delta_1 \delta_2\delta_3 \delta_4 \delta_5 \delta_6  
&= a_1 b_1 a_2 b_2 a_3 b_3 b_4 a_5 \underline{b_5 b} a_6 b_6, \\
\delta_1 \delta_2\delta_3 \delta_4 \delta_5 \delta_6  
&= a_1 b_1 a_2 b_2 a_3 b_3 b_4 a_5 \underline{b b_5} a_6 b_6.
\end{align*}
From the second,
\begin{align*}
\delta_1 \delta_2\delta_3 \delta_4 \delta_5 \delta_6  
&= a_1 b_1 a_2 b_2 a_3 b_3 \underline{b_4 a_5 b} b_5 a_6 b_6 \\
&\sim a_1 b_1 a_2 b_2 a_3 b_3 a_4 b_4 a_5 b_5 a_6 b_6 =N_6.
\end{align*}

\noindent
{\bf Case 3}: Capping $\delta_4$ or $\delta_3$.
They give the equivalent relations
\begin{align*}
\delta_1 \delta_2\delta_3 \delta_4 \delta_5 \delta_6  
&= a_1 b_1 a_2 b_2 a_3 \underline{b_3 b} a_4 b_4 b_5 a_6 b_6, \\
\delta_1 \delta_2\delta_3 \delta_4 \delta_5 \delta_6  
&= a_1 b_1 a_2 b_2 a_3 \underline{b b_3} a_4 b_4 b_5 a_6 b_6.
\end{align*}
From the first,
\begin{align*}
\delta_1 \delta_2\delta_3 \delta_4 \delta_5 \delta_6  
&= a_1 b_1 a_2 b_2 a_3 b_3 \underline{b a_4 b_4} b_5 a_6 b_6, \\
&\sim a_1 b_1 a_2 b_2 a_3 b_3 a_4 b_4 a_5 b_5 a_6 b_6 =N_6.
\end{align*}

\noindent
{\bf Case 4}: Capping $\delta_2$.
We get
\begin{align*}
\delta_1 \delta_2\delta_3 \delta_4 \delta_5 \delta_6 
&= a_1 b_1 \underline{a_2 b a_2} b_2 b_3 a_4 b_4 b_5 a_6 b_6 \\
&\sim a_1 b_1 b a_2 \underline{b b_2 b_3 a_4 b_4} b_5 a_6 b_6 \\
&\sim a_1 b_1 \underline{b a_2 b_2} b_3 a_4 b_4 a_5 b_5 a_6 b_6 \\
&\sim a_1 b_1 a_2 b_2 a_3 b_3 a_4 b_4 a_5 b_5 a_6 b_6 =N_6.
\end{align*}

\noindent
{\bf Case 5}: Capping $\delta_1$.
We get
\begin{align*}
\delta_1 \delta_2\delta_3 \delta_4 \delta_5 \delta_6 
&= \underline{a_1 b a_1} b_1 a_2 b_2 b_3 a_4 b_4 b_5 a_6 b_6 \\
&\sim b a_1 b b_1 a_2 b_2 b_3 a_4 b_4 b_5 a_6 b_6 \\
&\sim a_1 b_1 \underline{b a_2 b_2} b_3 a_4 b_4 \underline{b_5 a_6 b} b_6 \\
&\sim a_1 b_1 a_2 b_2 a_3 b_3 a_4 b_4 a_5 b_5 a_6 b_6 = N_6.
\end{align*}

\subsubsection{Four-time blow-up of $f_n$ and the $5$-holed torus relation $N_5 = \partial_5$}
We cap each one of the boundary components of $N_6 = \partial_6$.

\noindent
{\bf Case 1}: Capping $\delta_6$.
We get
\begin{align*}
\delta_1 \delta_2\delta_3 \delta_4 \delta_5 
&= a_1 b_1 a_2 b_2 a_3 b_3 a_4 b_4 a_5 b_5 a_1 b \\
&\sim a_1 \underline{b a_1 b_1} a_2 b_2 a_3 b_3 a_4 b_4 a_5 b_5 \\
&\sim a_1 a_1 b_1 a_2 a_2 b_2 a_3 b_3 a_4 b_4 a_5 b_5.
\end{align*}
We denote the resulting $5$-holed torus relation by $N_5 = \partial_5$:
\begin{align}
a_1 a_1 b_1 a_2 a_2 b_2 a_3 b_3 a_4 b_4 a_5 b_5
= \delta_1 \delta_2\delta_3 \delta_4 \delta_5.
\end{align}

\noindent
{\bf Case 2}: The other boundary components $\delta_1, \delta_2, \cdots, \delta_5$.
Observe that the relation $N_6 = \partial_6$ is symmetric with respect to the rotation by $2\pi/6$.
Therefore, we can reduce all the other cases to Case 1.

\subsubsection{Five-time blow-up of $f_n$ and the $4$-holed torus relation $N_4 = \partial_4$}
We cap each one of the boundary components of $N_5 = \partial_5$.

\noindent
{\bf Case 1}: Capping $\delta_5$.
We get
\begin{align*}
\delta_1 \delta_2\delta_3 \delta_4  
&= a_1 a_1 b_1 a_2 a_2 b_2 a_3 b_3 a_4 b_4 a_1 b \\
&\sim a_1 b a_1 a_1 b_1 a_2 a_2 b_2 a_3 b_3 a_4 b_4 \\
&\sim \underline{b} a_1 b a_1 b_1 a_2 a_2 b_2 a_3 \underline{b_3 a_4 b_4} \\
&\sim \underline{a_1 b} a_1 b_1 a_2 a_2 b_2 a_3 a_3 b_3 a_4 \underline{b_4} \\
&\sim a_1 a_1 b_1 a_2 a_2 b_2 a_3 a_3 b_3 a_4 a_4 b_4.
\end{align*}
We take the last expression as the $4$-holed torus relation $N_4 = \partial_4$:
\begin{align}
a_1 a_1 b_1 a_2 a_2 b_2 a_3 a_3 b_3 a_4 a_4 b_4
= \delta_1 \delta_2\delta_3 \delta_4.
\end{align}

\noindent
{\bf Case 2}: Capping $\delta_4$.
We get
\begin{align*}
\delta_1 \delta_2\delta_3 \delta_4  
&= a_1 a_1 b_1 a_2 a_2 b_2 a_3 \underline{b_3 a_4 b} a_4 b_4 \\
&\sim a_1 a_1 b_1 a_2 a_2 b_2 a_3 a_3 b_3 a_4 a_4 b_4 =N_4.
\end{align*}

\noindent
{\bf Case 3}: Capping $\delta_3$.
We get
\begin{align*}
\delta_1 \delta_2\delta_3 \delta_4  
&= a_1 a_1 b_1 a_2 a_2 b_2 a_3 \underline{b a_3 b_3} a_4 b_4 \\
&\sim a_1 a_1 b_1 a_2 a_2 b_2 a_3 a_3 b_3 a_4 a_4 b_4 =N_4.
\end{align*}

\noindent
{\bf Case 4}: Capping $\delta_2$.
We get
\begin{align*}
\delta_1 \delta_2\delta_3 \delta_4  
&= a_1 a_1 b_1 a_2 \underline{a_2 b a_2} b_2 a_3 b_3 a_4 b_4 \\
&\sim a_1 a_1 b_1 a_2 b a_2 \underline{b b_2 a_3 b_3} a_4 b_4 \\
&\sim a_1 a_1 b_1 a_2 \underline{b a_2 b_2} a_3 b_3 a_4 a_4 b_4 \\
&\sim a_1 a_1 b_1 a_2 a_2 b_2 a_3 a_3 b_3 a_4 a_4 b_4 = N_4.
\end{align*}

\noindent
{\bf Case 5}: Capping $\delta_1$.
We get
\begin{align*}
\delta_1 \delta_2\delta_3 \delta_4  
&= a_1 \underline{a_1 b a_1} a_1 b_1 a_2 b_2 a_3 b_3 a_4 b_4 \\
&\sim a_1 b \underline{a_1 b a_1} b_1 a_2 b_2 a_3 b_3 a_4 b_4 \\
&\sim \underline{a_1 b} b a_1 \underline{b b_1 a_2 b_2} a_3 b_3 a_4 \underline{b_4} \\
&\sim a_1 \underline{b a_1 b_1} a_2 b_2 a_3 a_3 b_3 a_4 a_4 b_4 \\
&\sim a_1 a_1 b_1 a_2 a_2 b_2 a_3 a_3 b_3 a_4 a_4 b_4 = N_4.
\end{align*}

\begin{remark}
	The $4$-holed torus relation $N_4 = \partial_4$ has a different but equally symmetric expression, which is given in~\cite{KorkmazOzbagci2008}. We can relate the two relation as follows.
	\begin{align*}
	\delta_1 \delta_2\delta_3 \delta_4 = N_4 
	&= a_1 a_1 b_1 a_2 a_2 b_2 a_3 a_3 b_3 a_4 a_4 b_4 \\
	&\sim \underline{a_1 b_1 a_2} \; \underline{a_2 b_2 a_3} \; \underline{a_3 b_3 a_4} \; \underline{a_4 b_4 a_1} \\
	&\sim \underline{b a_1 b_1} \; \underline{b a_2 b_2} \; \underline{b a_3 b_3} \; \underline{b a_4 b_4} \\
	&\sim a_2 b a_1 a_3 b a_2 a_4 b a_3 a_1 b a_4 \\
	&\sim (a_1 a_3 b a_2 a_4 b)^2.
	\end{align*}
	The last expression is Korkmaz-Ozbagci's $4$-holed torus relation.
\end{remark}

\subsubsection{Six-time blow-up of $f_n$ and the $3$-holed torus relation $N_3 = \partial_3$}
We need to cap each one of the boundary components of $N_4 = \partial_4$.
However, noticing that $N_4 = \partial_4$ is symmetric with respect to the rotation by $2\pi/4$, it is clear that any capping gives an equivalent $3$-holed torus relation.

For reference, we give a symmetric expression.
By capping $\delta_4$, we get
\begin{align*}
\delta_1 \delta_2\delta_3   
&= a_1 a_1 b_1 a_2 a_2 b_2 a_3 a_3 b_3 a_1 a_1 b \\
&\sim a_1 \underline{a_1 b a_1} a_1 b_1 a_2 a_2 b_2 a_3 a_3 b_3 \\
&\sim \underline{a_1 b} a_1 \underline{b a_1 b_1} a_2 a_2 b_2 a_3 a_3 \underline{b_3}  \\
&\sim a_1 a_1 a_1 b_1 a_2 a_2 a_2 b_2 a_3 a_3 a_3 b_3.
\end{align*}
We take the last expression as the $3$-holed torus relation $N_3 = \partial_3$:
\begin{align}
a_1 a_1 a_1 b_1 a_2 a_2 a_2 b_2 a_3 a_3 a_3 b_3
= \delta_1 \delta_2\delta_3.
\end{align}

\begin{remark}
	The $3$-holed torus relation $N_3 = \partial_3$ also has an alternative nice expression, which is called \textit{the star relation}~\cite{Gervais2001}. Here we show the equivalence explicitly.
	\begin{align*}
	\delta_1 \delta_2\delta_3 = N_3
	&= a_1 a_1 a_1 b_1 a_2 a_2 a_2 b_2 a_3 a_3 a_3 b_3 \\
	&\sim a_1 \underline{a_1 b_1 a_2} a_2 \underline{a_2 b_2 a_3} a_3 \underline{a_3 b_3 a_1} \\
	&\sim a_1 a_2 b a_1 a_2 a_3 b a_2 a_3 a_1 b a_3 \\
	&\sim (a_1 a_2 a_3 b)^3.
	\end{align*}
	The last expression gives nothing but the star relation.
\end{remark}

\subsubsection{Seven-time blow-up of $f_n$ and the $2$-holed torus relation $N_2 = \partial_2$}
Since $N_3 = \partial_3$ is symmetric with respect to the rotation by $2\pi/3$ it is obvious that capping any one boundary component of $N_3 = \partial_3$ yields an equivalent $2$-holed torus relation.
 
Capping $\delta_3$ of $N_3 = \partial_3$ gives
\begin{align*}
\delta_1 \delta_2   
&= a_1 a_1 \underline{a_1 b_1 a_2} a_2 \underline{a_2 b_2 a_1} a_1 a_1 b \\
&\sim \underline{a_1} a_1 b_1 a_2 b a_2 a_1 b a_2 a_1 \underline{a_1 b} \\
&\sim \underline{b a_1 b_1} a_2 b a_2 a_1 b a_2 a_1 b a_1 \\
&\sim a_2 b a_1 a_2 b a_2 a_1 b a_2 a_1 b a_1 \\
&\sim (a_1 b a_2)^4.
\end{align*}
Thus, we get the $2$-holed torus relation $N_2 = \partial_2$:
\begin{align}
(a_1 b a_2)^4 = \delta_1 \delta_2,
\end{align}
which is also known as {\it the $3$-chain relation}.

\subsubsection{Eight-time blow-up of $f_n$ and the $1$-holed torus relation $N_1 = \partial_1$}
Capping either $\delta_2$ or $\delta_1$ of $N_2 = \partial_2$ gives
\begin{align*}
\delta_1
&= (a_1 b a_1)^4 \\
&= a_1 b a_1 \underline{a_1 b a_1} a_1 b a_1 \underline{a_1 b a_1} \\
&\sim a_1 b a_1 b a_1 b a_1 b a_1 b a_1 b = (a_1 b)^6.
\end{align*}
Writing $a=a_1$, we get the $1$-holed torus relation $N_1 = \partial_1$:
\begin{align}
(a b)^6 = \delta_1,
\end{align}
which is also known as {\it the $2$-chain relation}.

\begin{remark}
	Our non-spin $k$-holed torus relations $N_k = \partial_k$ are all Hurwitz equivalent to Korkmaz-Ozbagci's $k$-holed torus relations. The latter were constructed in the way that the $9$-holed torus relation is a lift of the smaller $k$-holed torus relations and hence conversely they can be obtained by capping boundary components of the $9$-holed torus relation.
\end{remark}

\begin{remark}
	As we have shown, the relations $N_9 = \partial_9$ and $S_8 = \partial_8$ correspond to the minimal holomorphic Lefschetz pencils $f_n$ on $\PP^2$ and $f_s$ on $\PP^1 \times \PP^1$, respectively (while the others $N_k = \partial_k$ ($k<9$) are just blow-ups of them).
	In Figure~\ref{F:handle}, we draw two handle diagrams of the elliptic Lefschetz fibration $E(1) = \PP^2 \sharp 9\overline{\PP}{}^{2} \to \PP^1$ and locate the $(-1)$-sections corresponding to $N_9 = \partial_9$ and $S_8 = \partial_8$.
	Blowing-down those sections must yield the $4$-manifolds $\PP^2$ and $\PP^1 \times \PP^1$, respectively, and the exceptional spheres become the base points of the Lefschetz pencils $f_n$ and $f_s$.
	\begin{figure}[htbp]
		\centering
		\subfigure[The nine $(-1)$-sections corresponding to $N_9 = \partial_9$.
		\label{F:k=9simpleHandle}]
		{\includegraphics[height=170pt]{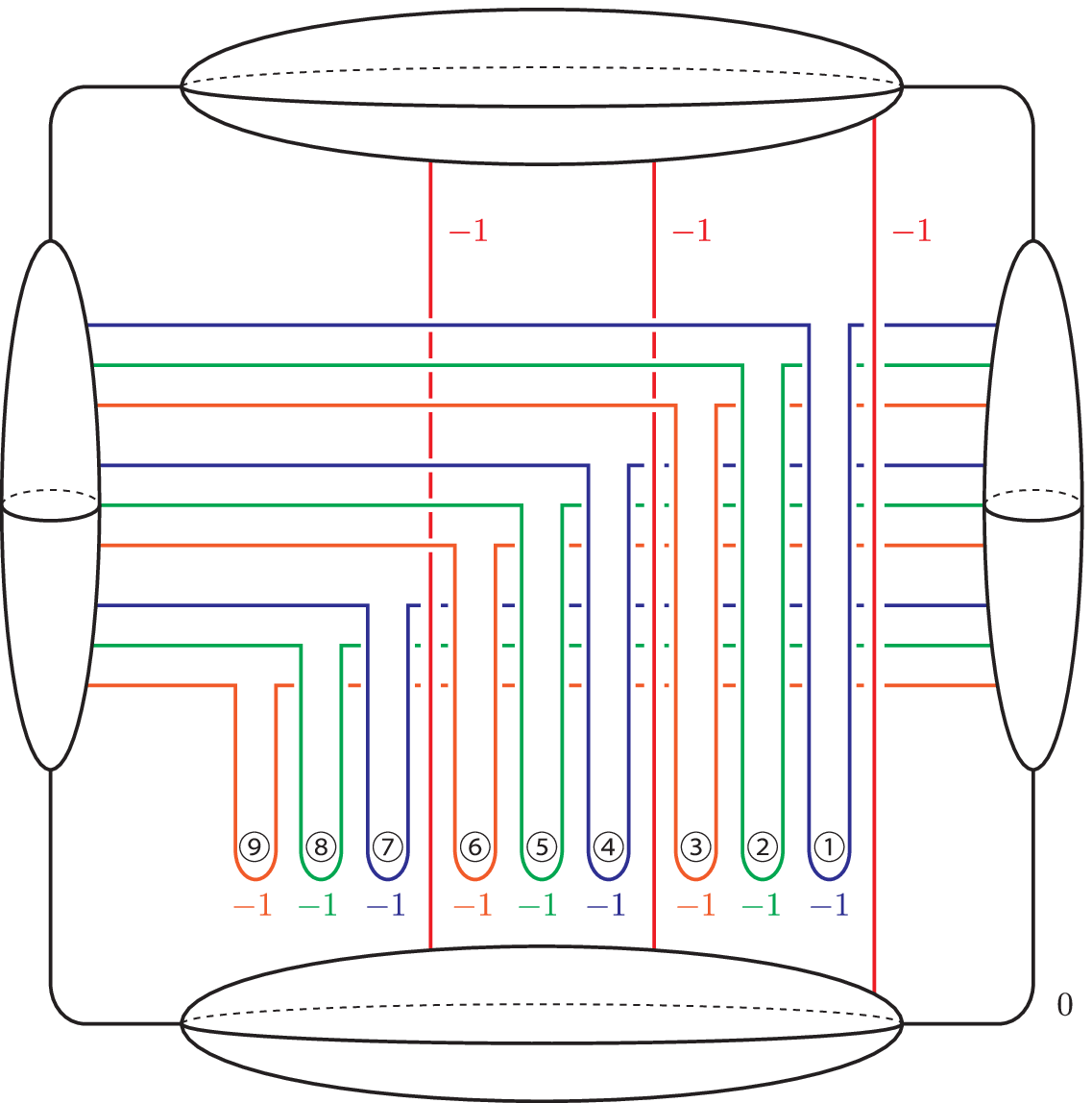}} 
		\hspace{.8em}	
		\subfigure[The eight $(-1)$-sections corresponding to $S_8 = \partial_8$. 
		\label{F:k=8TsimpleHandle}]
		{\includegraphics[height=170pt]{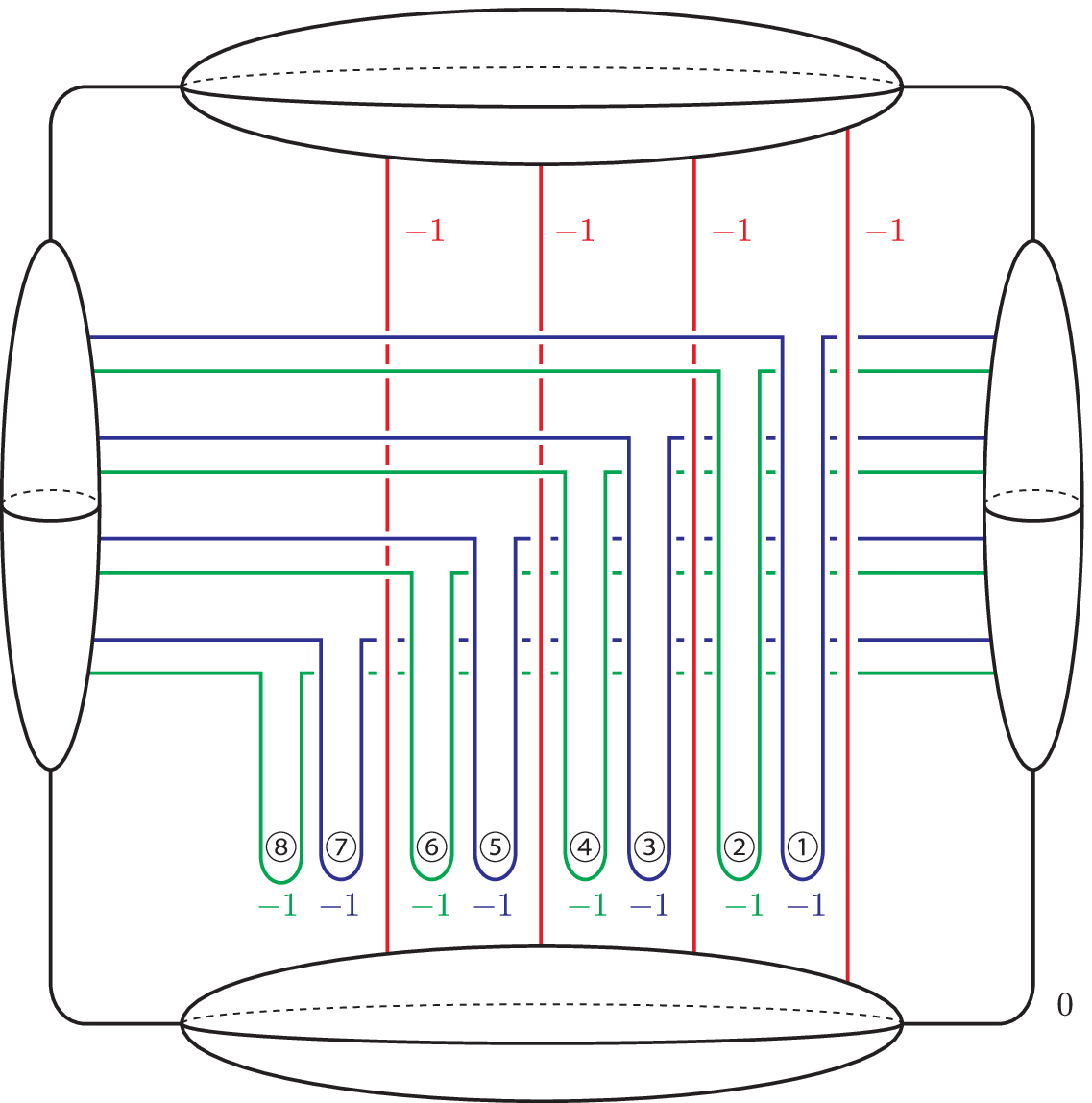}} 
		\caption{Handle diagrams of the elliptic Lefschetz fibration $E(1) = \PP^2 \sharp 9\overline{\PP}{}^{2} \to \PP^1$ with configurations of $(-1)$-sections.} \label{F:handle}
	\end{figure}
\end{remark}

\begin{remark}

	Positive Dehn twist factorizations (with homologically nontrivial curves) of elements in mapping class groups of holed surfaces also provide positive allowable Lefschetz fibrations over $D^2$, which in turn represent Stein fillings of contact $3$-manifolds.
	As summarized in~\cite{Ozbagci2015}, there is another elegant interpretation of the $k$-holed torus relations in this point of view.
	As the monodromy of an open book, the boundary multi-twist $t_{\delta_{1}} \cdots t_{\delta_{k}}$ in $\MCG(\Sigma_1^k)$ yields the contact $3$-manifold $(Y_k,\xi_k)$ that is given as the boundary of the symplectic $D^2$-bundle over $T^2$ with Euler number $-k$.
	While the symplectic $D^2$-bundle naturally gives a Stein filling of $(Y_k,\xi_k)$, the positive allowable Lefschetz fibration over $D^2$ associated with the obvious Dehn twist factorization $t_{\delta_{1}} \cdots t_{\delta_{k}}$ also gives rise to the same Stein filling.
	If the boundary multi-twist $t_{\delta_{1}} \cdots t_{\delta_{k}}$ has another factorization (i.e. a $k$-holed torus relation) it also gives a Stein filling of $(Y_k,\xi_k)$.
	Indeed, the Stein fillings of $(Y_k,\xi_k)$ are classified by Ohta and Ono~\cite{OhtaOno2003}; besides the symplectic $D^2$-bundle there is (i) no more Stein filling when $k\geq10$, (ii) one more Stein filling when $k\leq9$ and $k\neq8$, and (iii) two more Stein fillings when $k=8$.
	The relationship between those Stein fillings and the positive allowable Lefschetz fibrations associated with the $k$-holed torus relations is summarized as follows;
	the Stein fillings in (ii) correspond to $N_k$, the two Stein fillings in (iii) are associated with $N_8$ and $S_8$.
	The fact that $(Y_k,\xi_k)$ has a unique Stein filling for $k\geq10$ reflects that there is no $k(\geq10)$-holed torus relation, which can be also seen from the fact that $E(1)= \PP^2 \sharp 9\overline{\PP}{}^{2}$ can admit no more than nine $(-1)$-sections.

\end{remark}

\bibliography{bibfile}
\bibliographystyle{plain}

\begin{figure}[htbp]
\centering
\subfigure[The Lefschetz vanishing cycles associated with $\alpha_1$ and $\alpha_2$.]{\includegraphics[width=100mm]{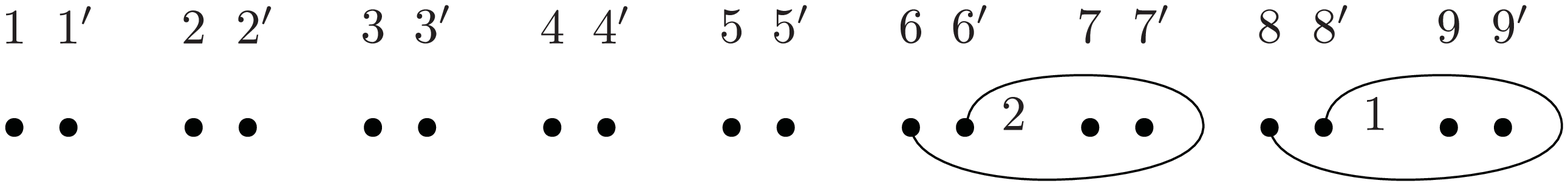}}
\subfigure[The Lefschetz vanishing cycle associated with $\alpha_3$.]{\includegraphics[width=100mm]{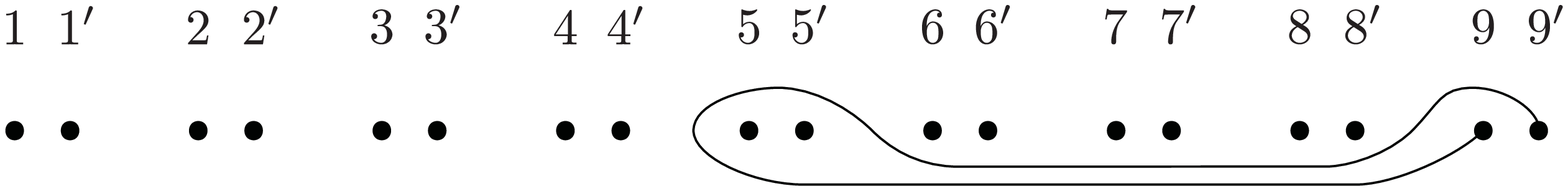}}
\subfigure[The Lefschetz vanishing cycle associated with $\alpha_4$.]{\includegraphics[width=100mm]{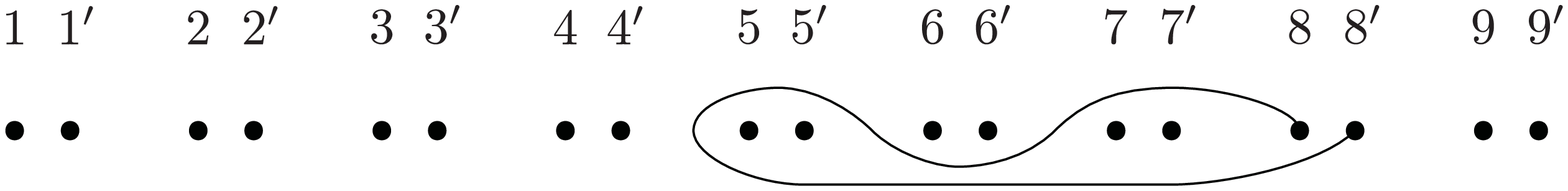}}
\subfigure[The Lefschetz vanishing cycle associated with $\alpha_9$.]{\includegraphics[width=100mm]{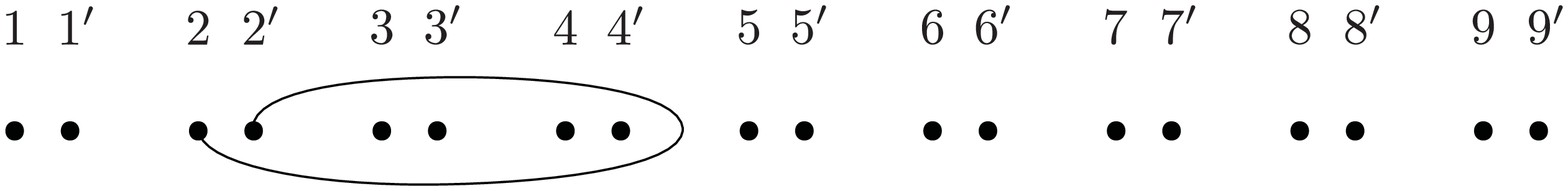}}
\subfigure[The Lefschetz vanishing cycles associated with $\alpha_{10}$ and $\alpha_{11}$.]{\includegraphics[width=100mm]{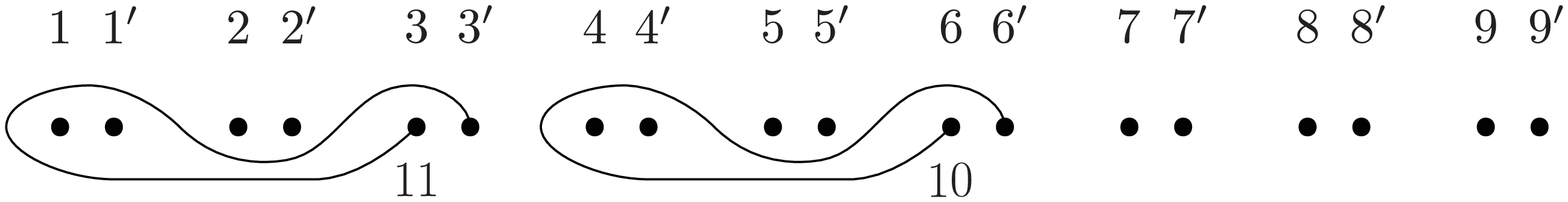}}
\subfigure[The Lefschetz vanishing cycle associated with $\alpha_{12}$.]{\includegraphics[width=100mm]{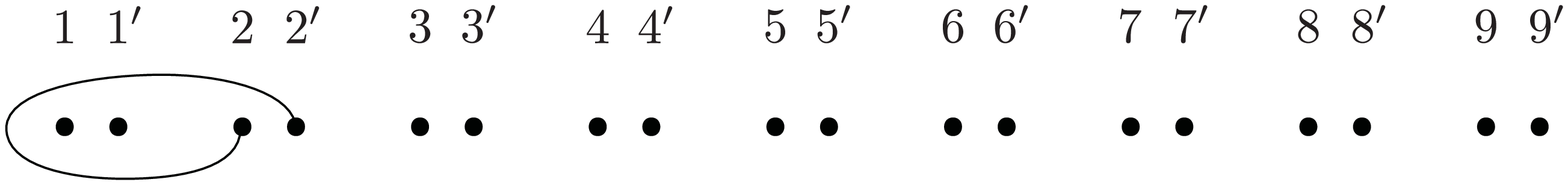}}
\caption{The Lefschetz vanishing cycles of branch points of the restriction $\pi'|_{\widetilde{C_n}}:\widetilde{C_n}\to \PP^1$.}
\label{F:LVC critical value set Un}
\end{figure}
\begin{figure}[htbp]
\centering
\subfigure[The path $\beta$.]{\includegraphics[width=100mm]{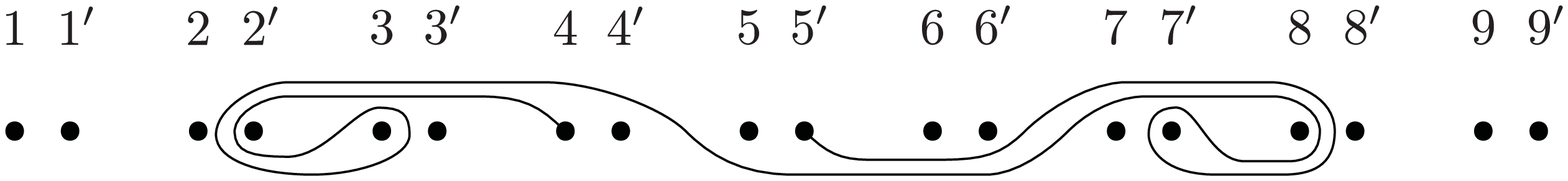}}
\subfigure[The paths $\gamma_1,\gamma_2,\gamma_3,\gamma_4$.]{\includegraphics[width=100mm]{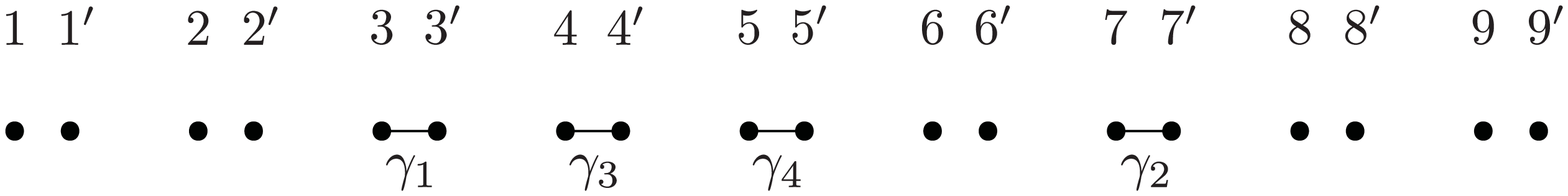}}
\caption{Paths in $\overline{{\pi'}^{-1}(a_0')}$.}
\label{F:curves for LVC critv Un}
\end{figure}
\begin{figure}[htbp]
\centering
\includegraphics[width=40mm]{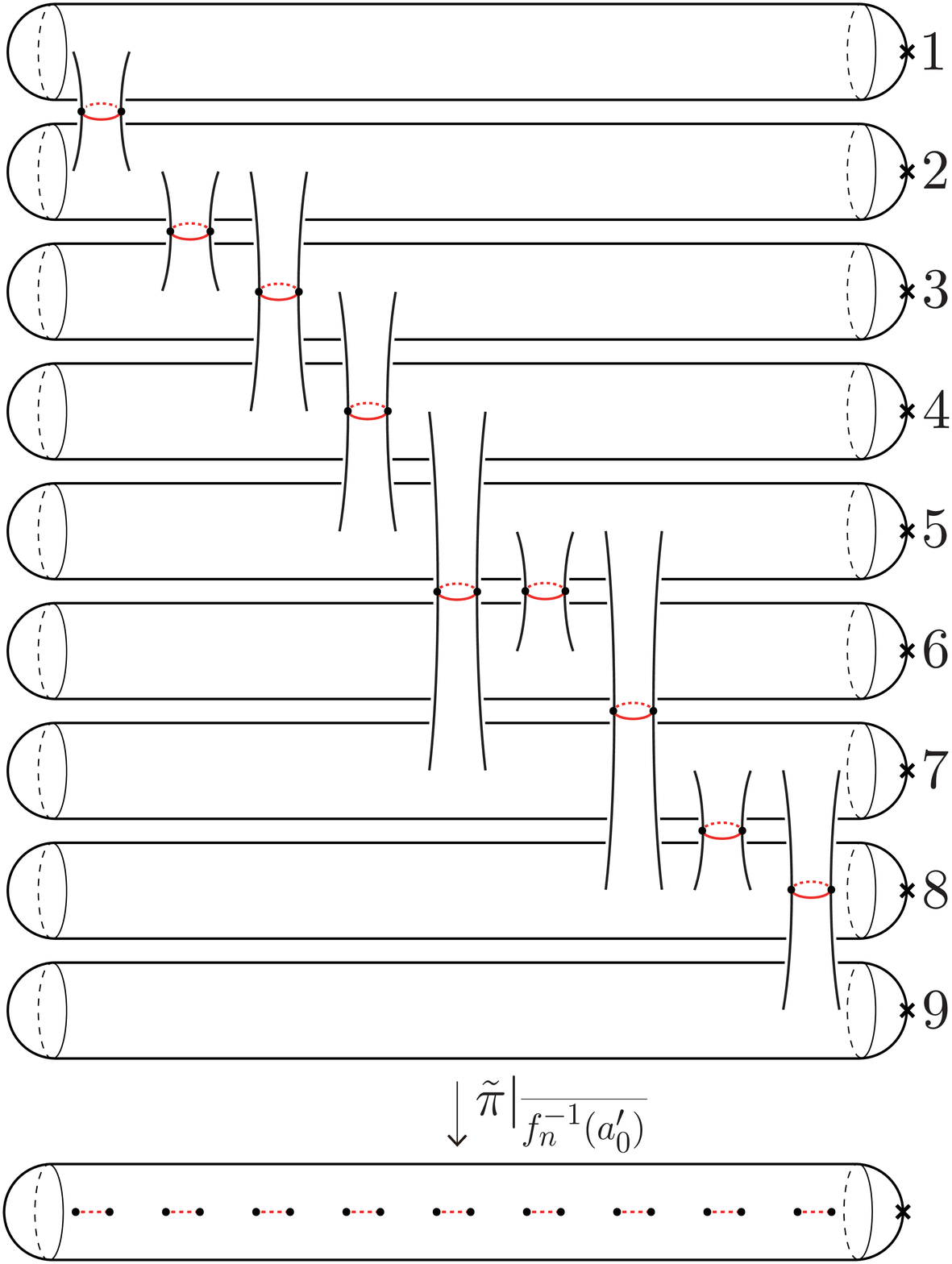}
\caption{The branched covering given in \cref{Eq:2-dim branched cover for Un}. }
\label{F:2-dim branch cover for Un}
\end{figure}
\begin{figure}[htbp]
\centering
\subfigure[]{\includegraphics[width=41mm]{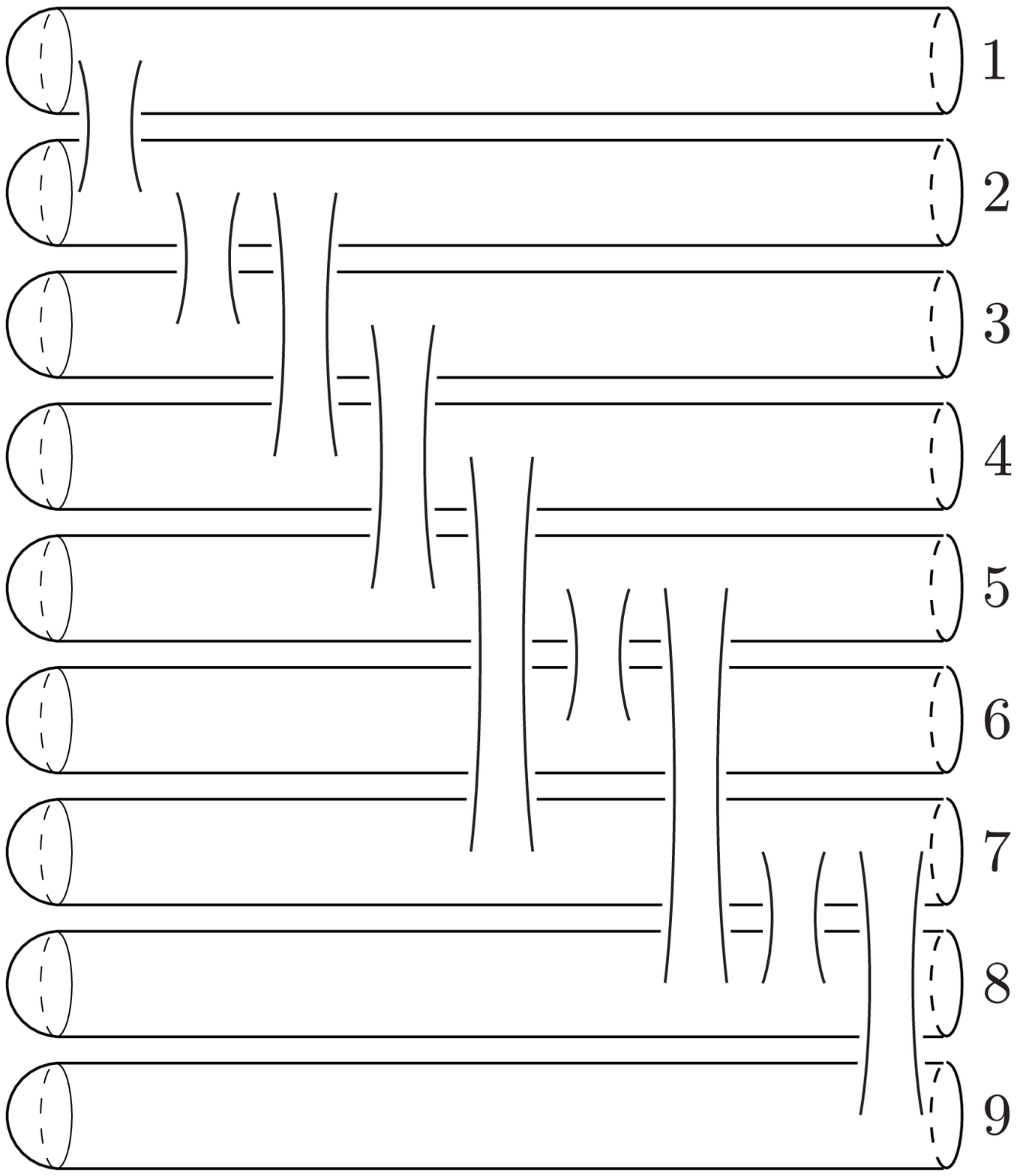}\label{F:isotopy branch Un original}}
\subfigure[]{\includegraphics[width=41mm]{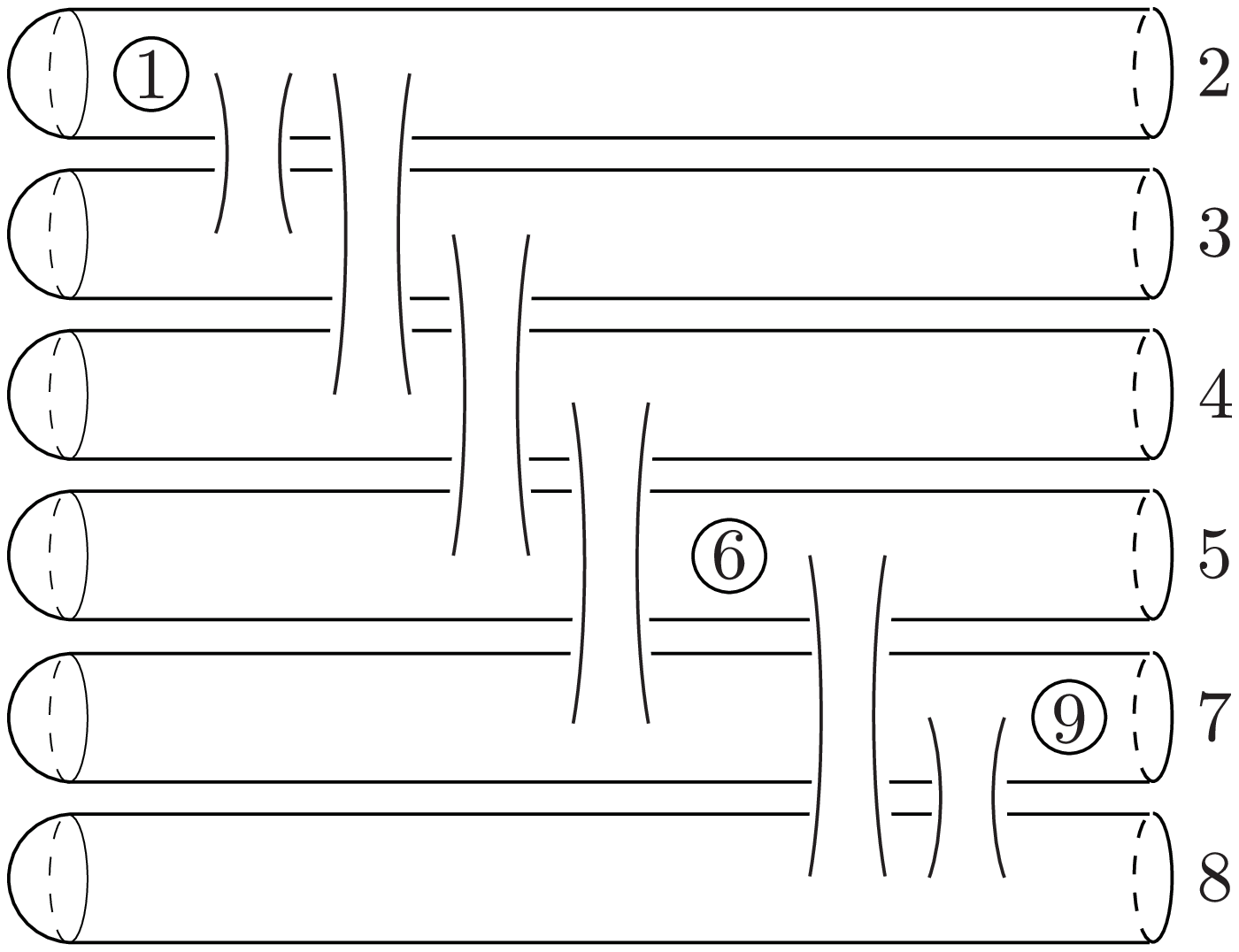}\label{F:isotopy branch Un shrinked}}
\subfigure[]{\includegraphics[width=41mm]{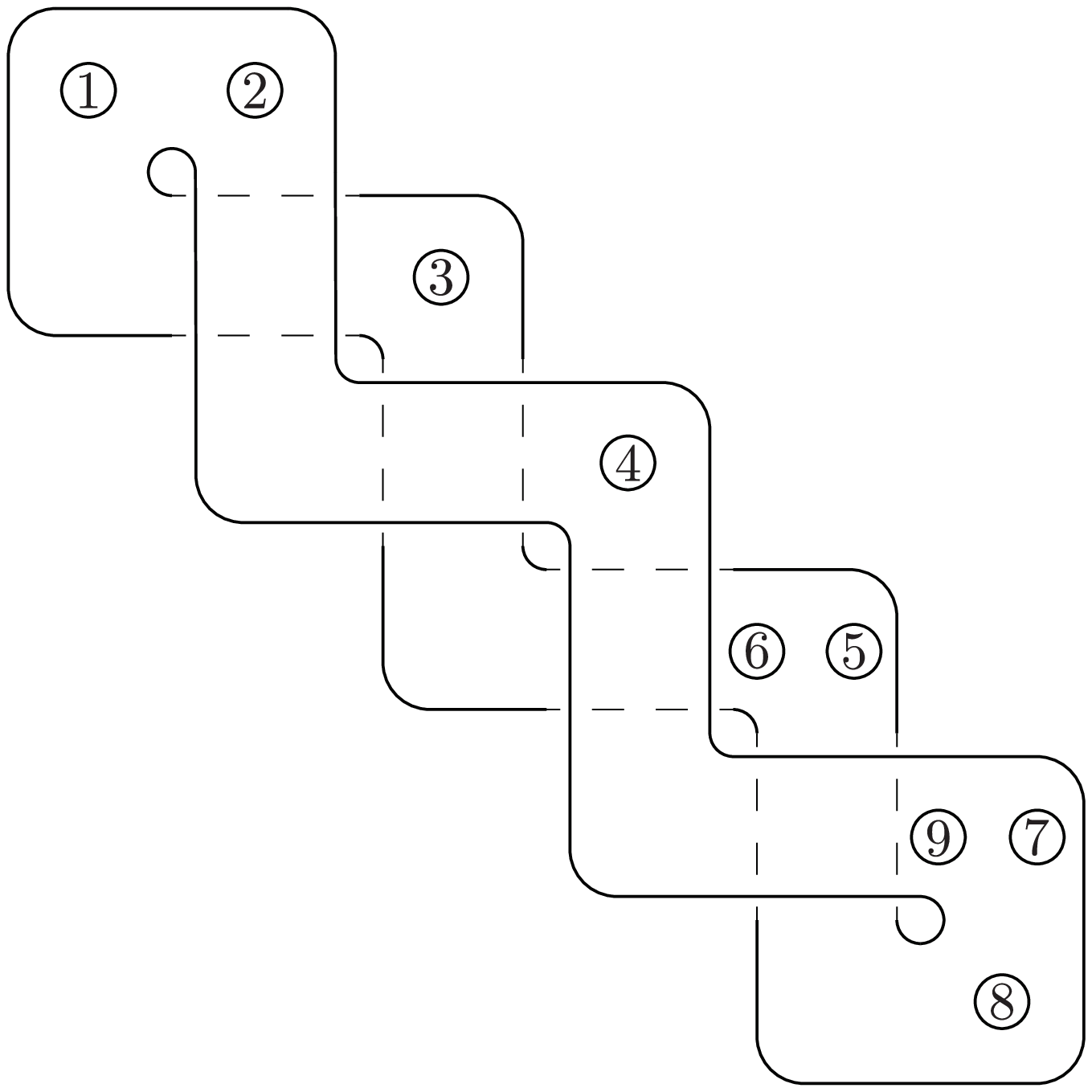}\label{F:isotopy branch Un deformed}}
\caption{The complement of neighborhoods of the base points in $\overline{f_n^{-1}(a_0')}$.  
}
\label{F:isotopy branch Un}
\end{figure}
\begin{figure}[htbp]
\centering
\subfigure[The vanishing cycles $c_1$ and $c_2$.]{\includegraphics[width=90mm]{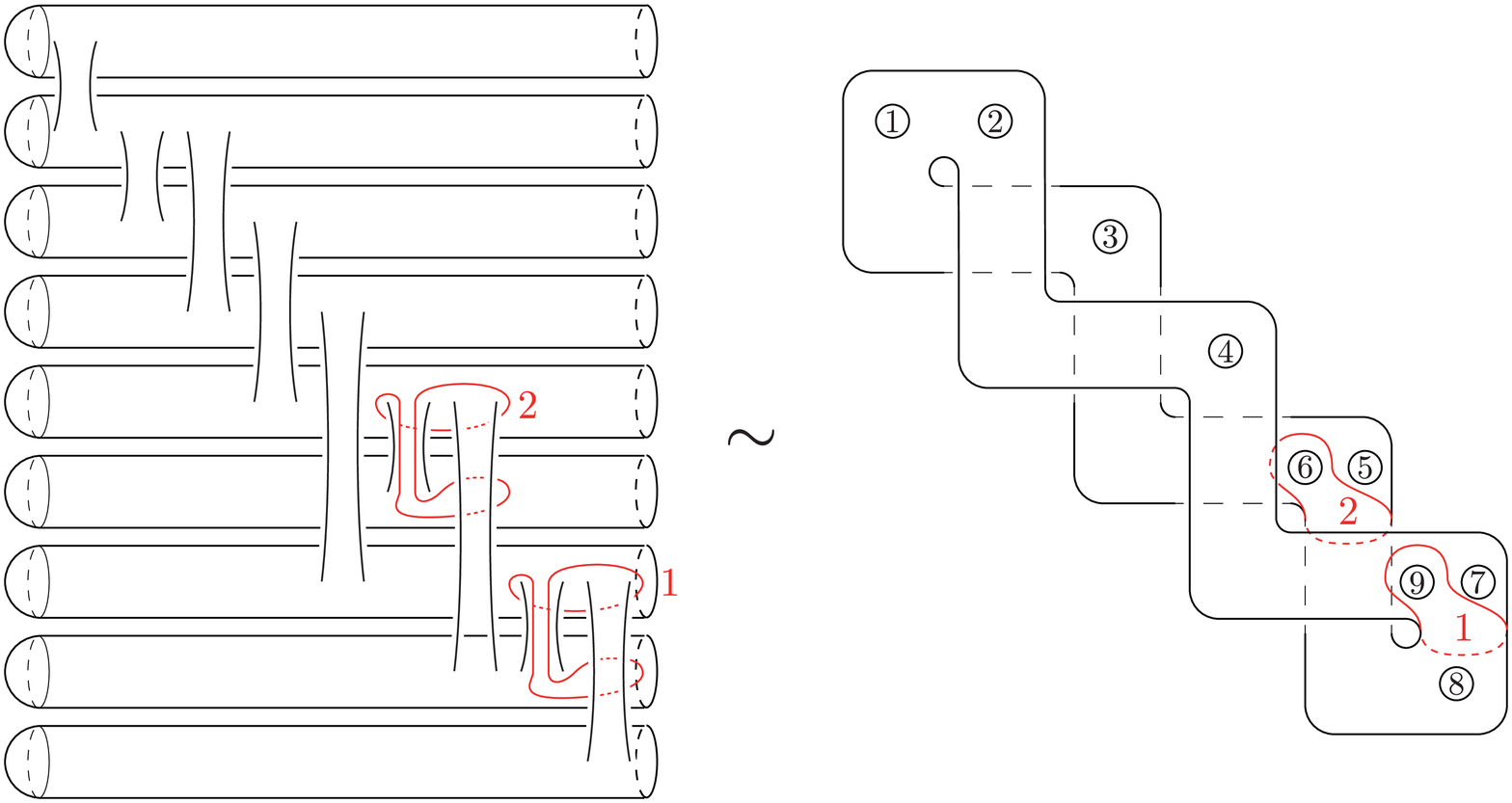}}
\caption{Vanishing cycles of the Lefschetz pencil $f_n$.}
\end{figure}
\addtocounter{figure}{-1}
\begin{figure}[htbp]
\centering
\subfigure{}
\setcounter{subfigure}{1}
\subfigure[The vanishing cycle $c_3$.]{\includegraphics[width=90mm]{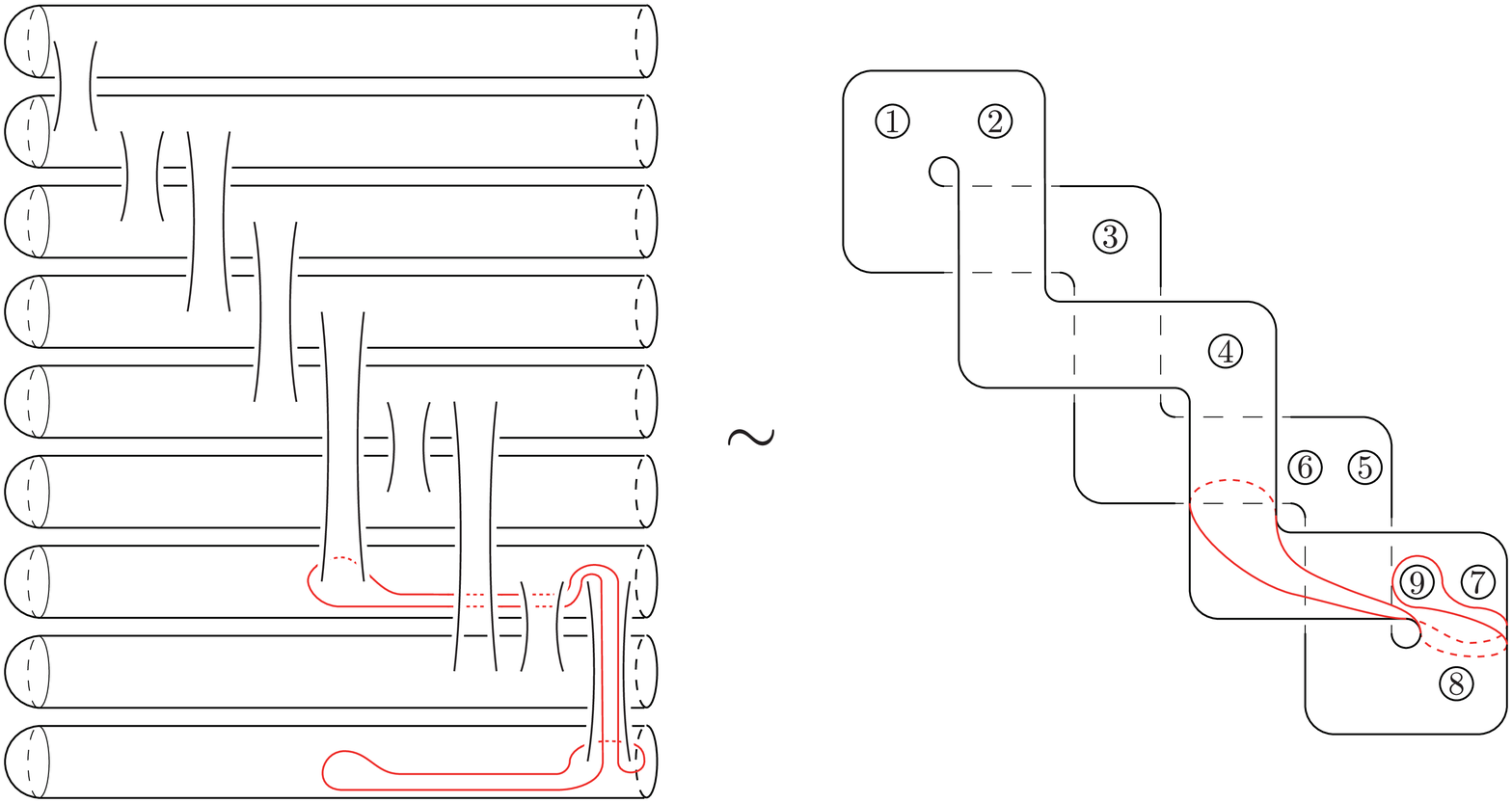}}
\subfigure[The vanishing cycle $c_4$.]{\includegraphics[width=90mm]{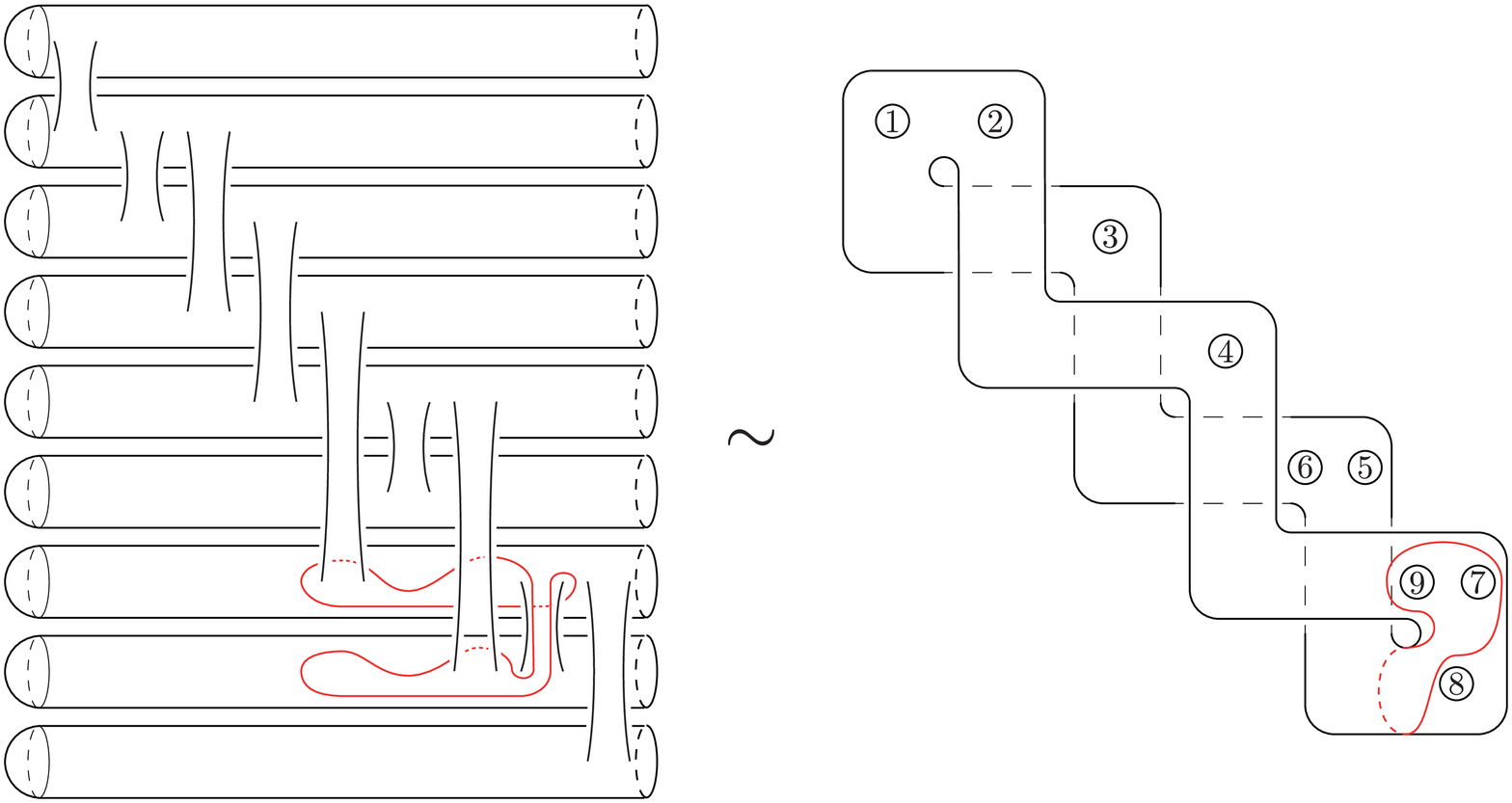}}
\subfigure[The vanishing cycle $c_5$.]{\includegraphics[width=90mm]{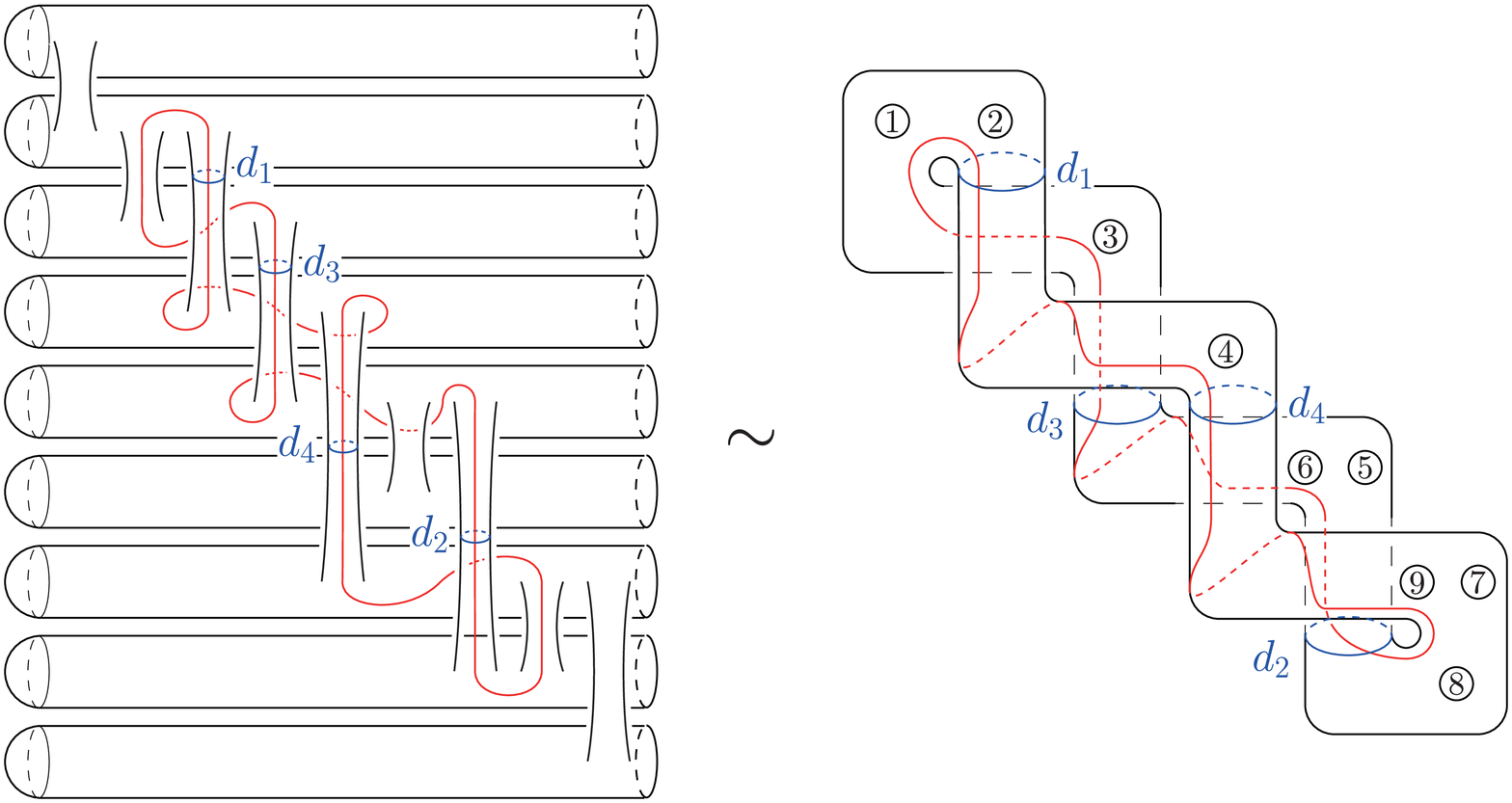}\label{F:vanishing cycle Un5}}
\caption{Vanishing cycles of the Lefschetz pencil $f_n$.}
\label{F:vanishing cycle Un}
\end{figure}
\addtocounter{figure}{-1}
\begin{figure}[htbp]
\centering
\subfigure{}
\setcounter{subfigure}{4}
\subfigure[The vanishing cycle $c_9$.]{\includegraphics[width=90mm]{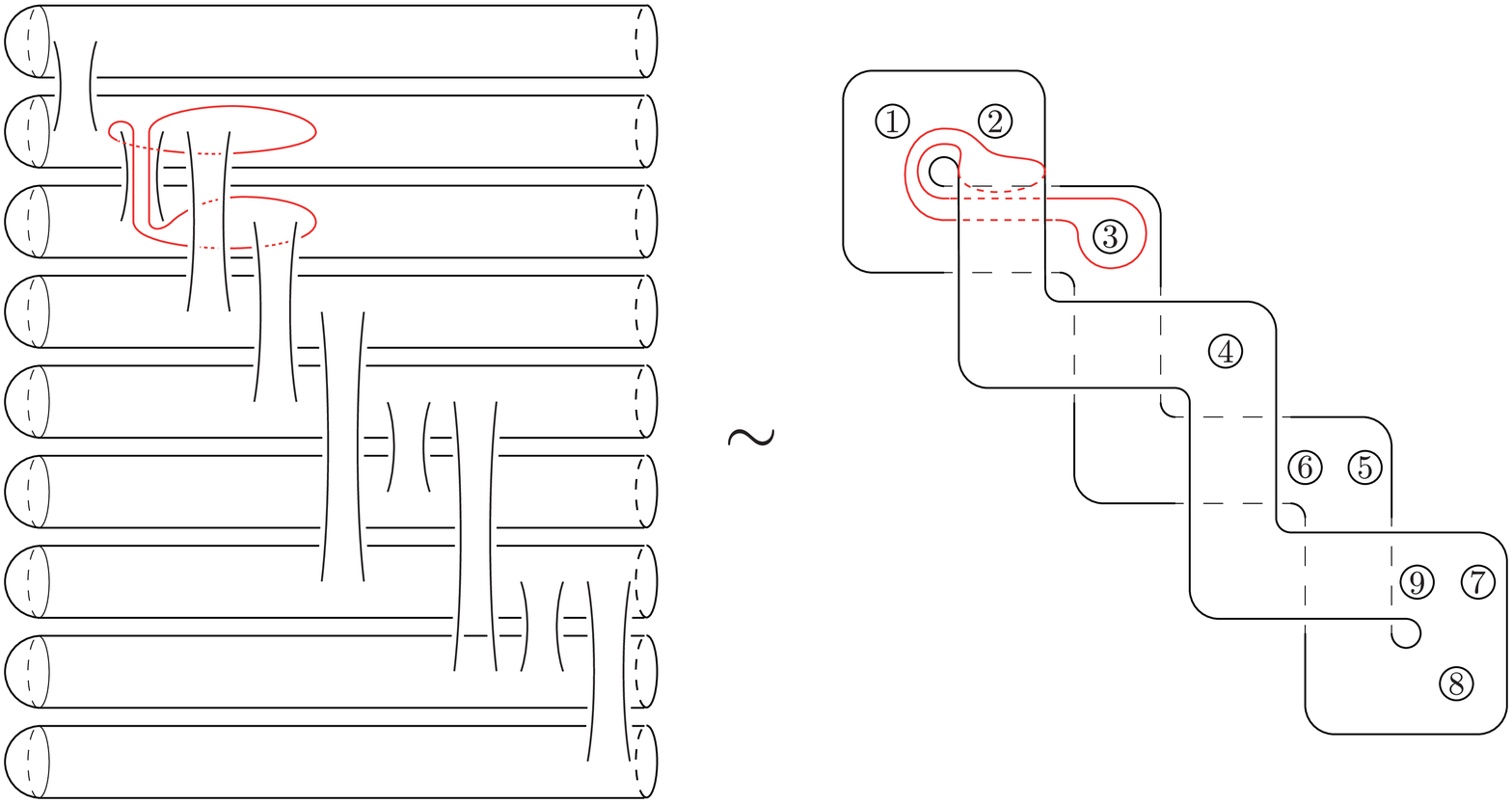}}
\subfigure[The vanishing cycles $c_{10}$ and $c_{11}$.]{\includegraphics[width=90mm]{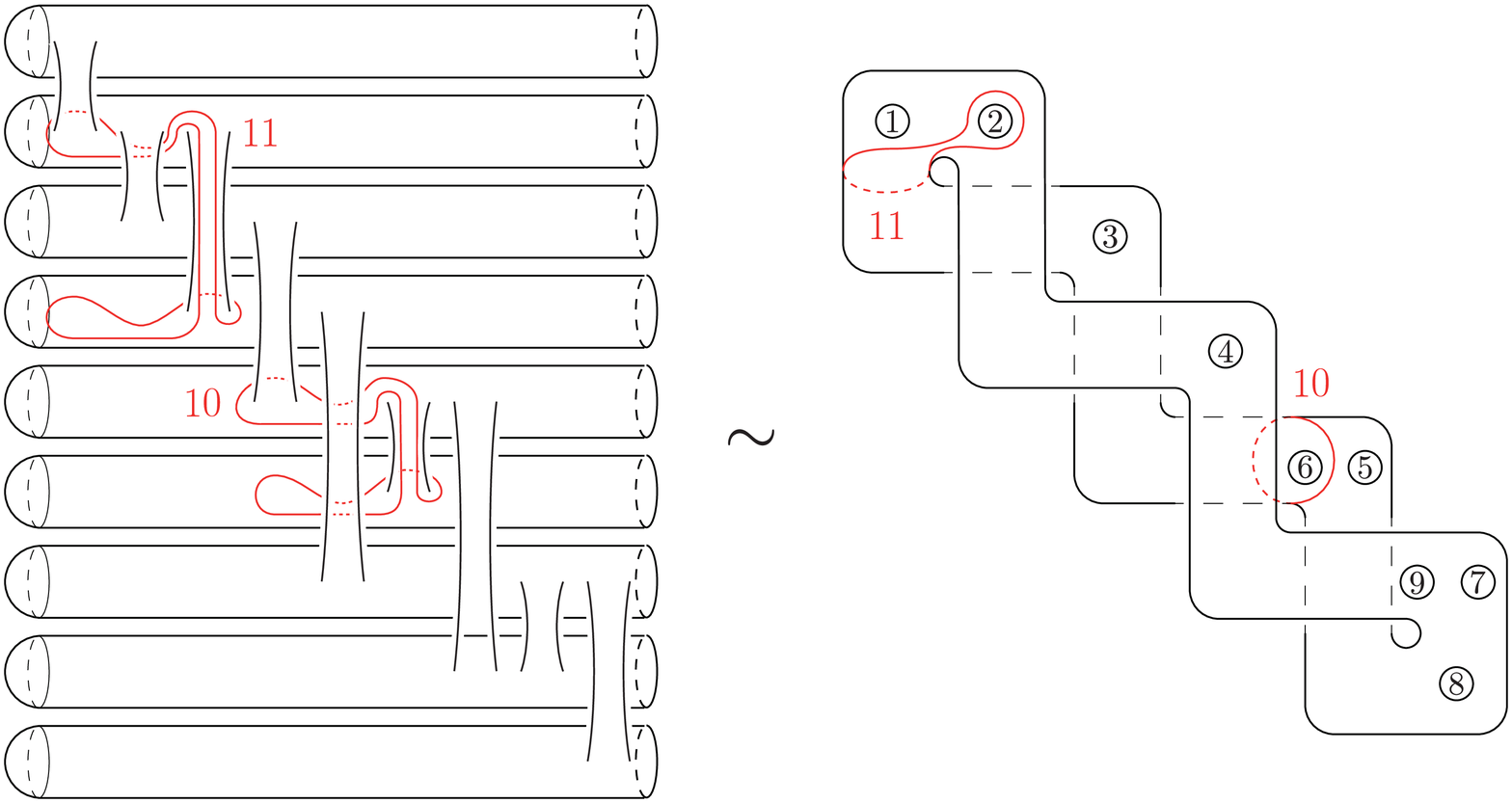}}
\subfigure[The vanishing cycle $c_{12}$.]{\includegraphics[width=90mm]{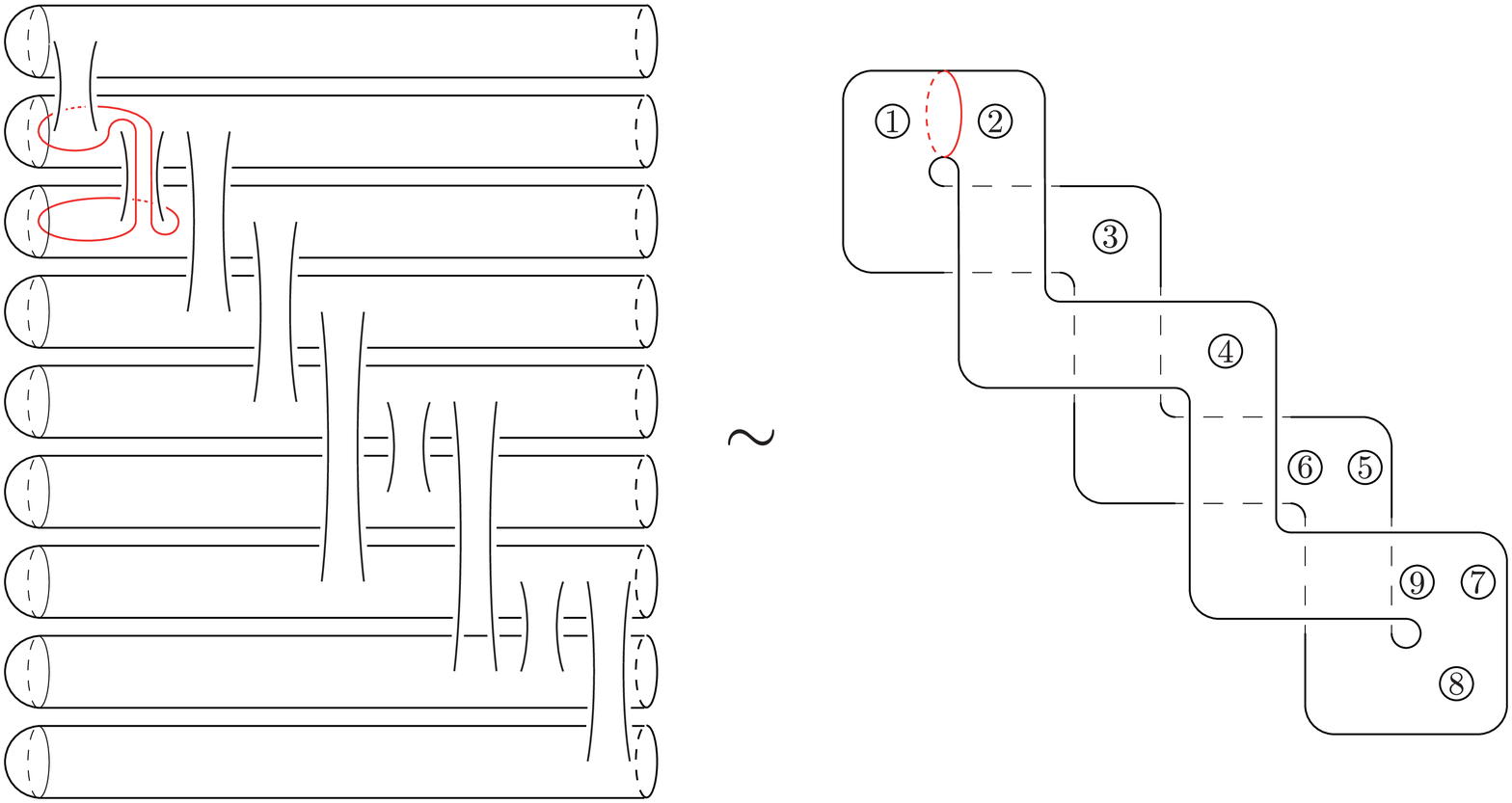}}
\caption{Vanishing cycles of the Lefschetz pencil $f_n$.}
\end{figure}
\begin{figure}[htbp]
\centering
\includegraphics[width=100mm]{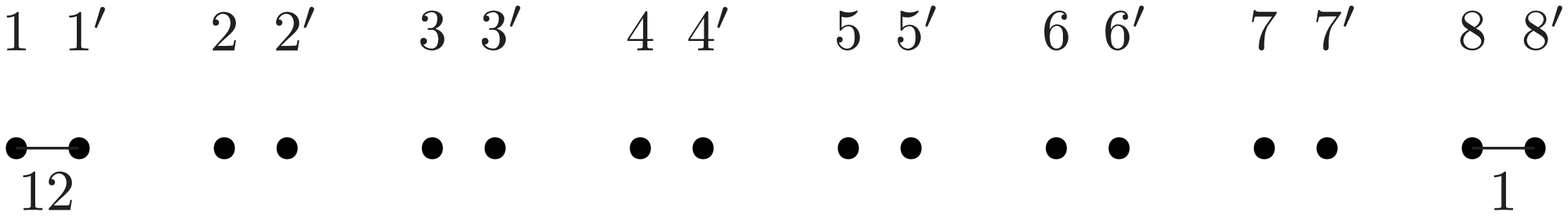}
\caption{The Lefschetz vanishing cycles associated with $\alpha_1$ and $\alpha_{12}$.}
\label{F:LVC extrabranch Us}
\end{figure}
\begin{figure}[htbp]
\centering
\subfigure[The Lefschetz vanishing cycles associated with $\alpha_2$ and $\alpha_{11}$.]{\includegraphics[width=100mm]{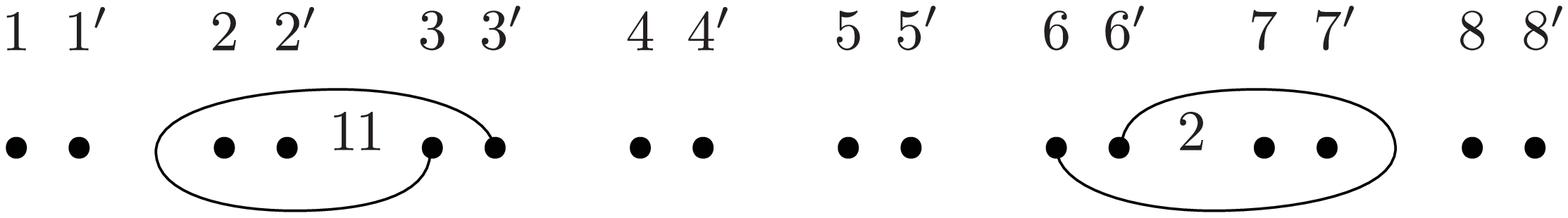}}
\subfigure[The Lefschetz vanishing cycle associated with $\alpha_{3}$.]{\includegraphics[width=100mm]{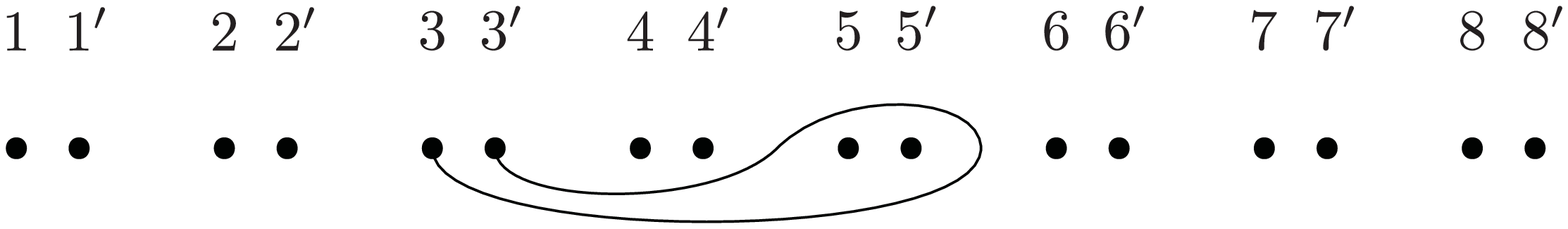}}
\subfigure[The Lefschetz vanishing cycle associated with $\alpha_{4}$.]{\includegraphics[width=100mm]{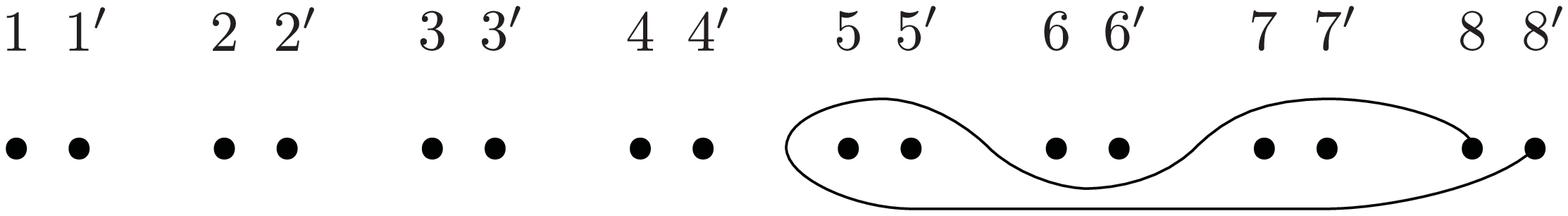}}
\subfigure[The Lefschetz vanishing cycle associated with $\alpha_{9}$.]{\includegraphics[width=100mm]{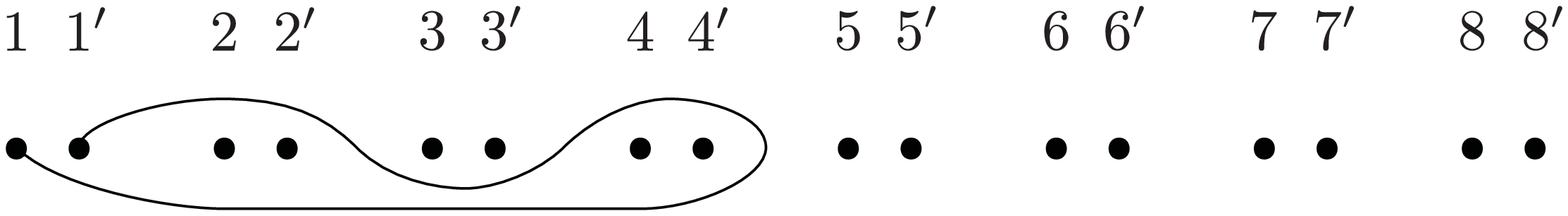}}
\subfigure[The Lefschetz vanishing cycle associated with $\alpha_{10}$.]{\includegraphics[width=100mm]{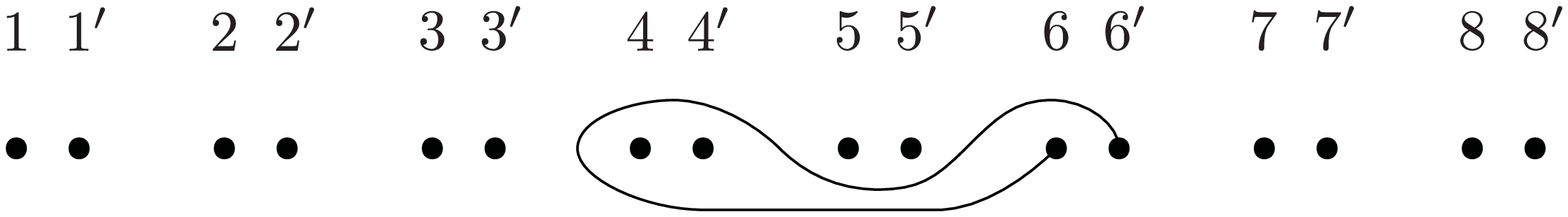}}
\subfigure[The path $\beta$.]{\includegraphics[width=100mm]{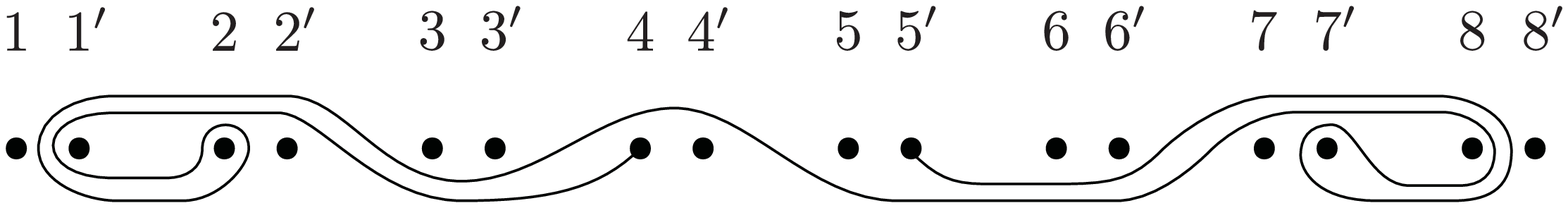}\label{F:LVC critv Us beta}}
\subfigure[The paths $\gamma_1,\gamma_2,\gamma_3,\gamma_4$.]{\includegraphics[width=100mm]{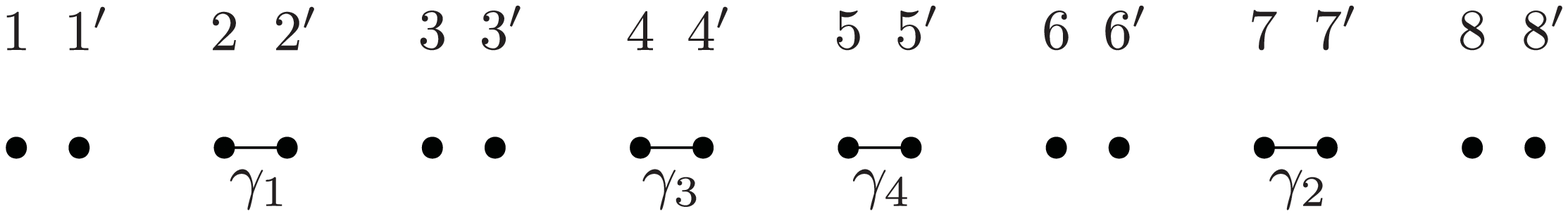}\label{F:LVC critv Us gamma}}
\caption{The Lefschetz vanishing cycles of branch points of the restriction $\pi'|_{\widetilde{C_s}}:\widetilde{C_s}\to \PP^1$.}
\label{F:LVC critical value set Us}
\end{figure}
\begin{figure}[htbp]
\centering
\includegraphics[width=43mm]{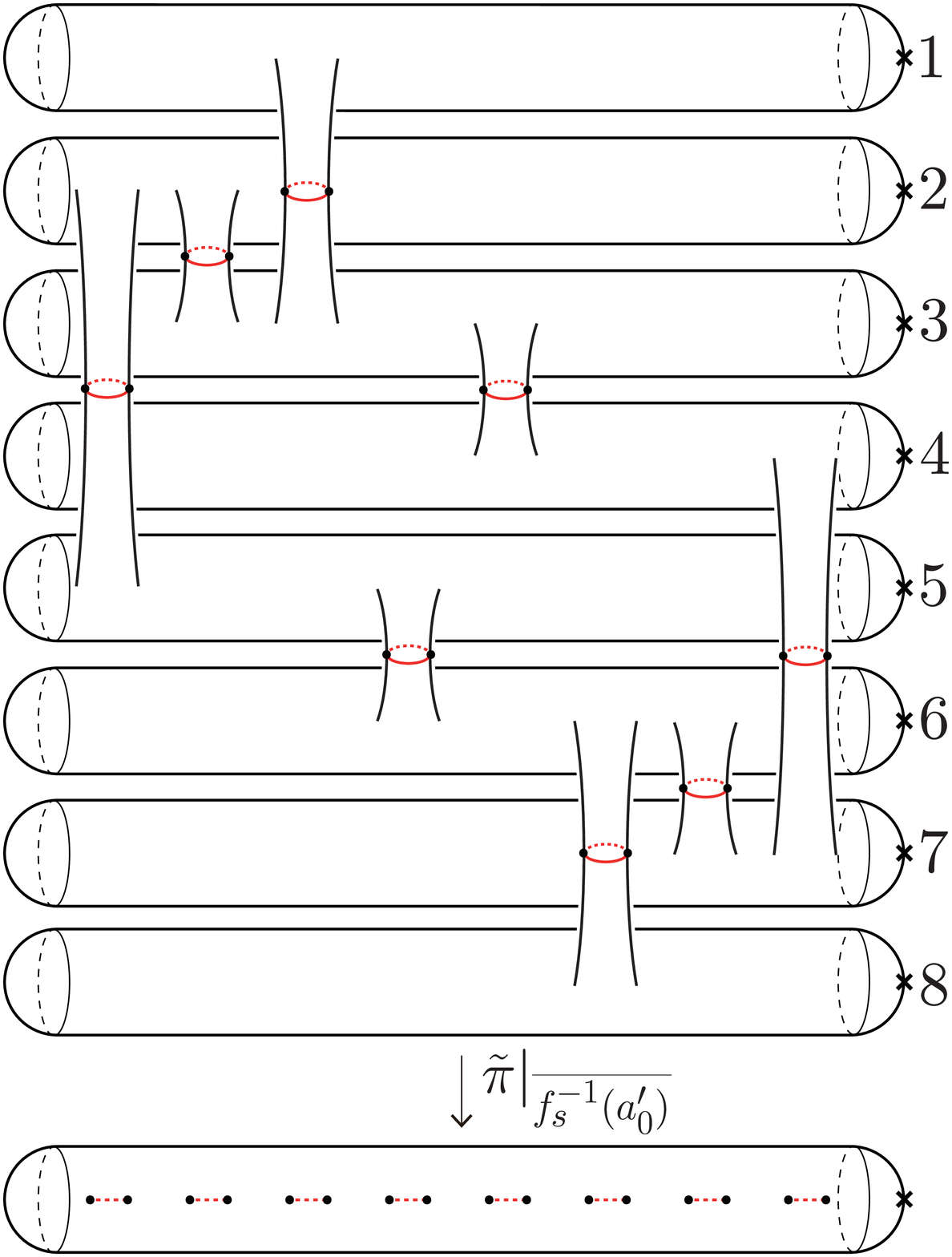}
\caption{The branched covering given in \cref{Eq:2-dim branched cover for Us}. }
\label{F:2-dim branch cover for Us}
\end{figure}
\begin{figure}[htbp]
\centering
\includegraphics[width=100mm]{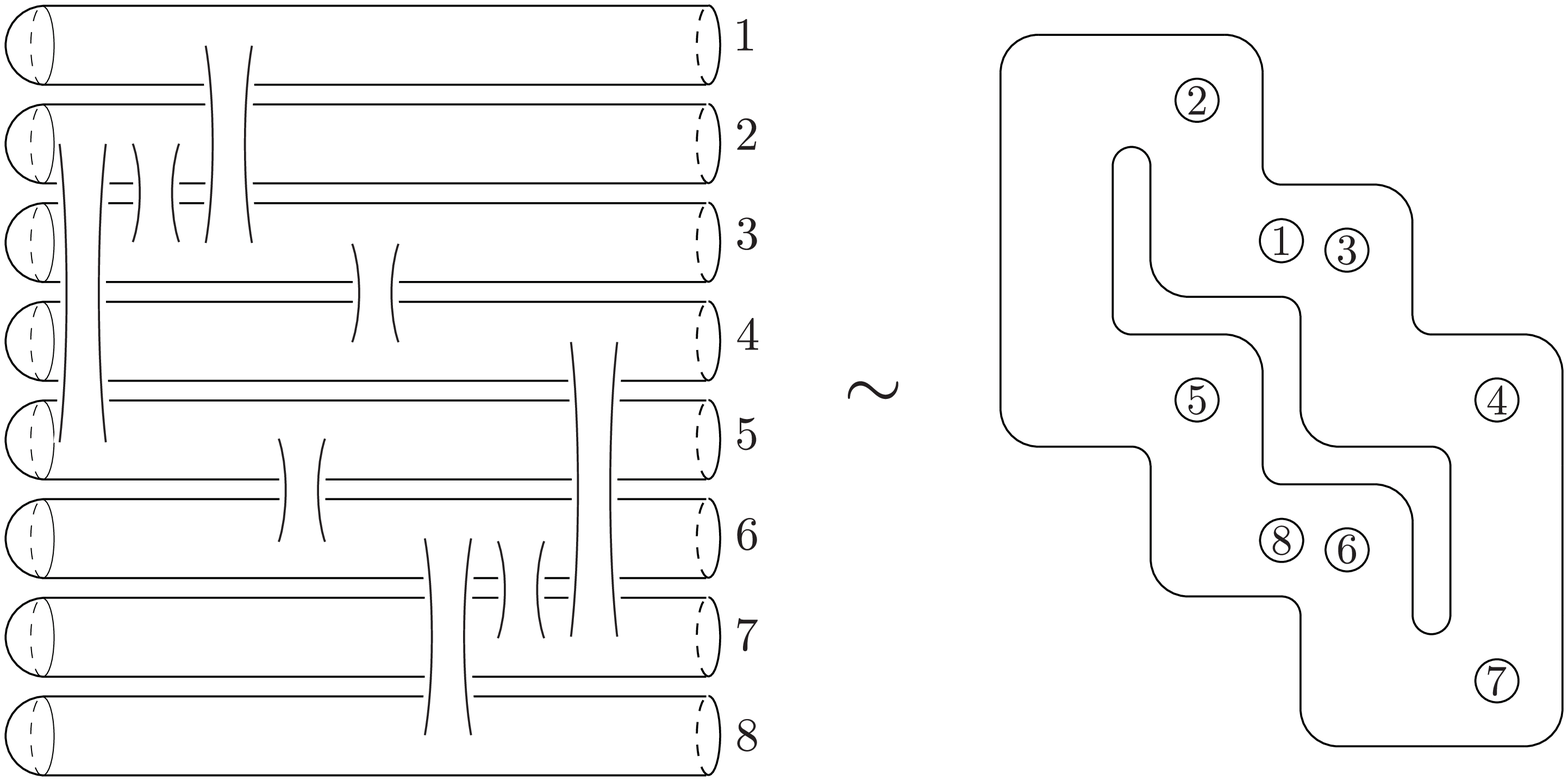}
\caption{The complement of neighborhoods of the base points in $\overline{f_s^{-1}(a_0')}$.  
}
\label{F:isotopy branchcover Us}
\end{figure}
\begin{figure}[htbp]
\centering
\subfigure[The vanishing cycles $c_1$ and $c_{12}$.]{\includegraphics[width=90mm]{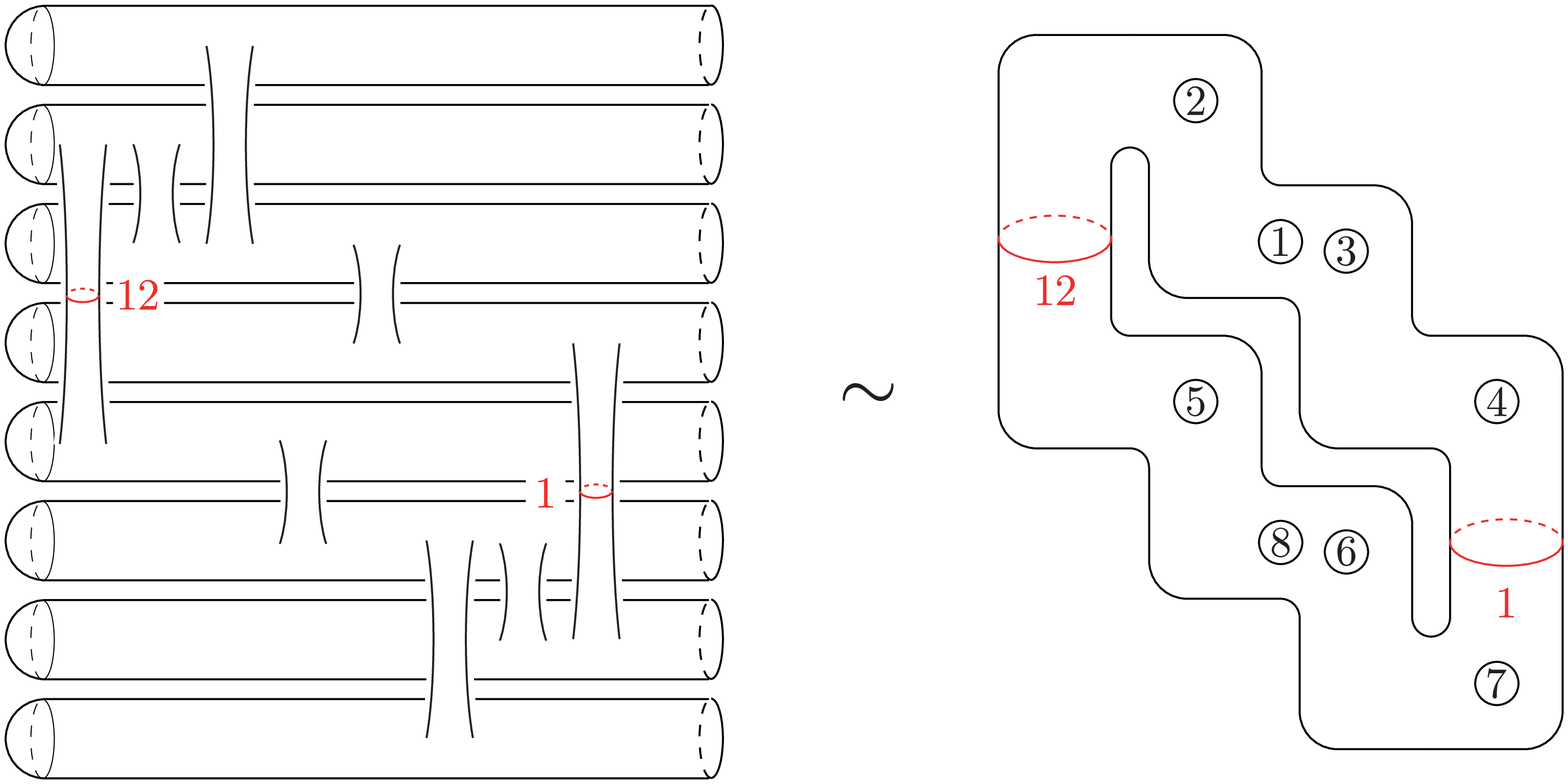}}
\caption{Vanishing cycles of the Lefschetz pencil $f_s$.}
\label{F:vanishing cycle Us}
\end{figure}
\addtocounter{figure}{-1}
\begin{figure}[htbp]
\centering
\subfigure{}
\setcounter{subfigure}{1}
\subfigure[The vanishing cycles $c_2$ and $c_{11}$.]{\includegraphics[width=90mm]{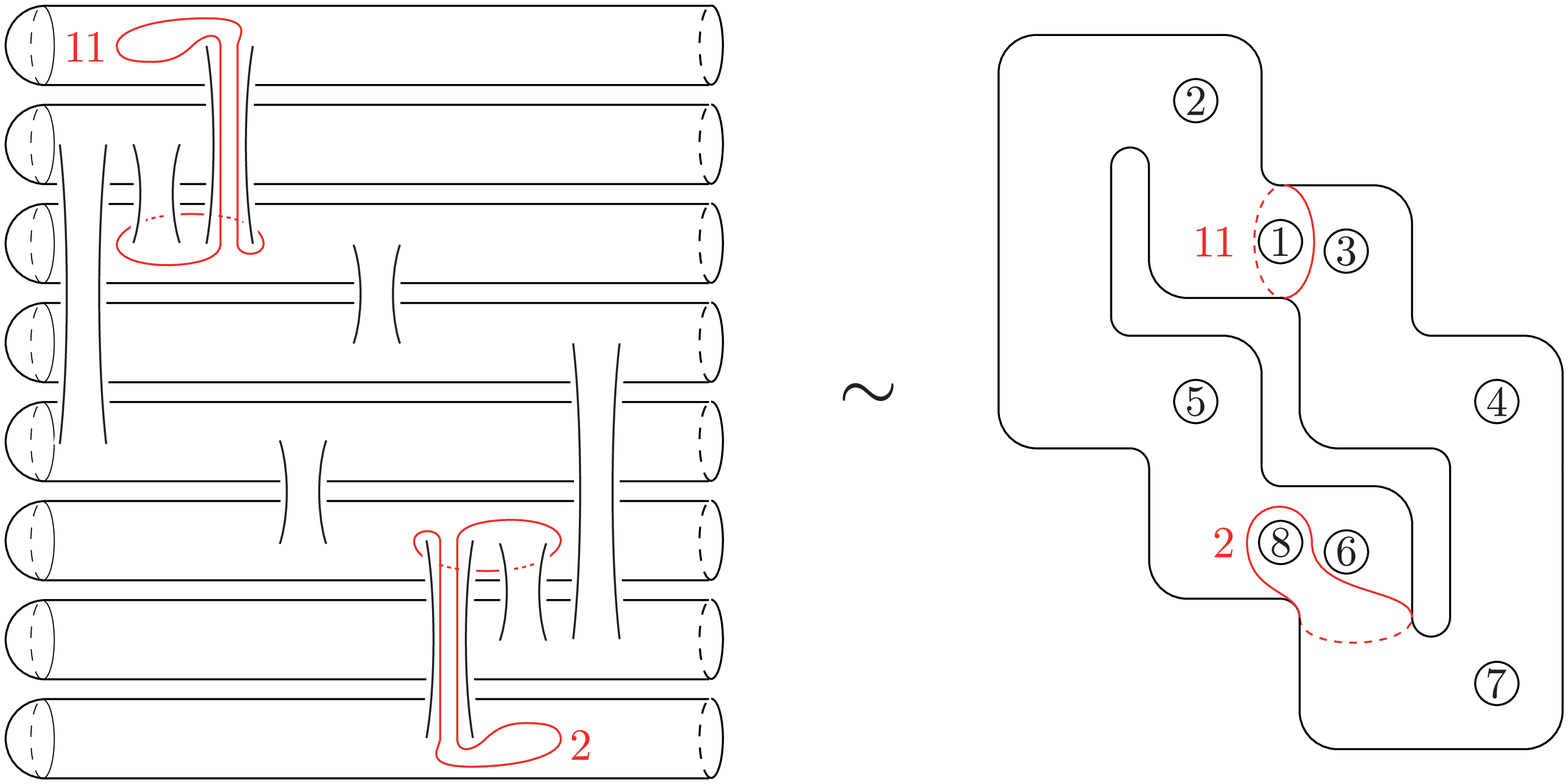}}
\subfigure[The vanishing cycle $c_3$.]{\includegraphics[width=90mm]{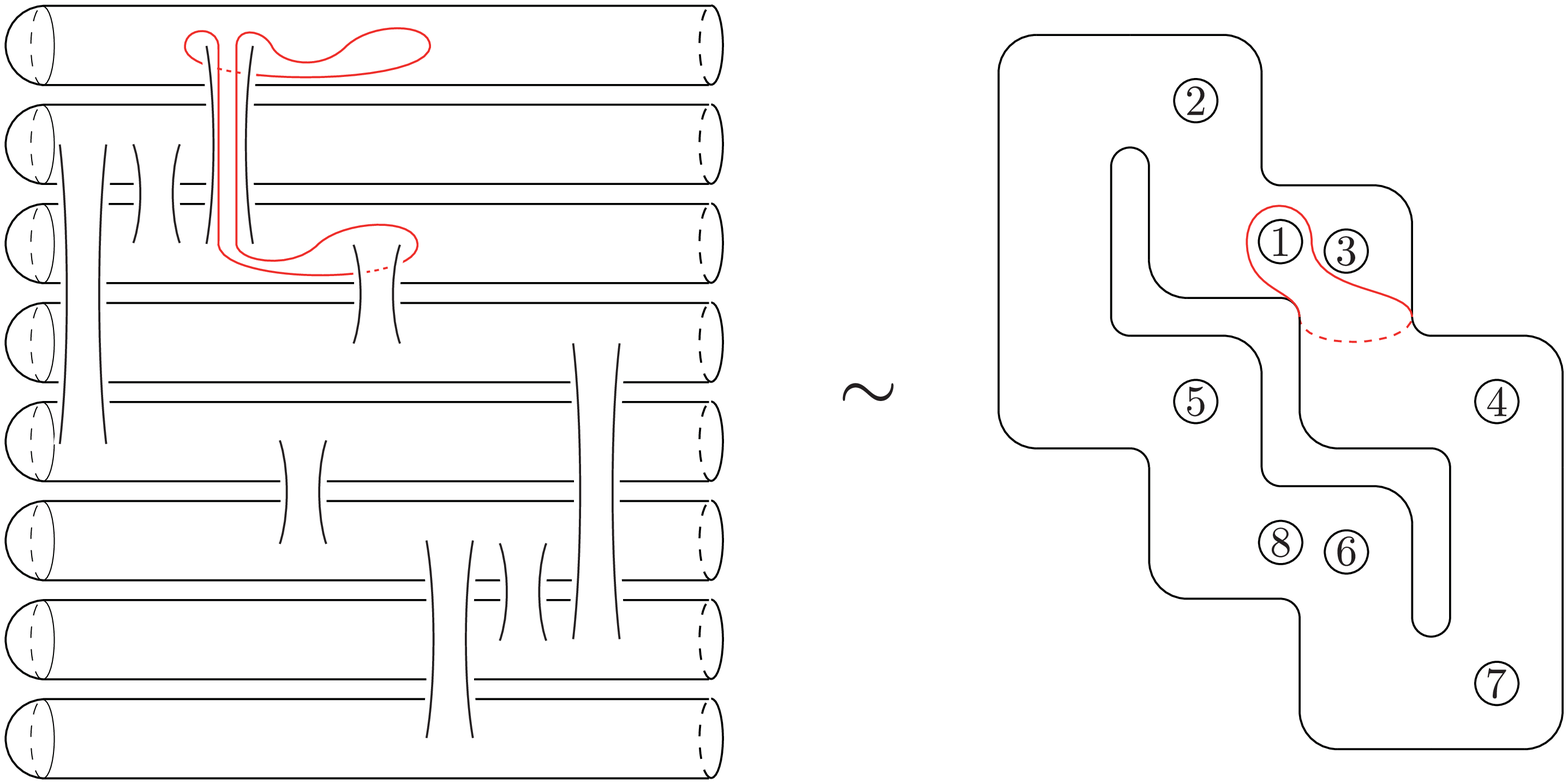}}
\subfigure[The vanishing cycle $c_4$.]{\includegraphics[width=90mm]{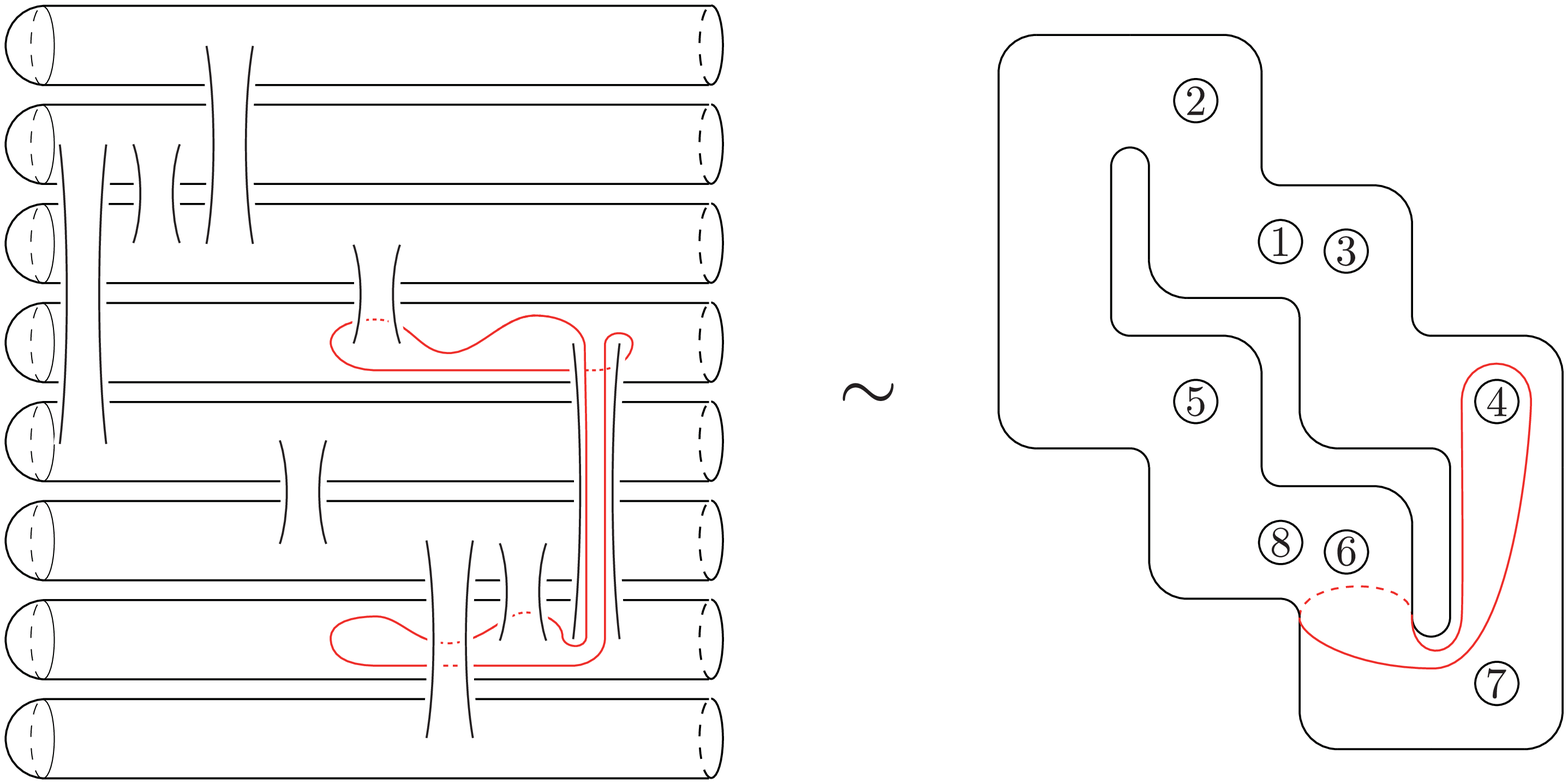}}
\caption{Vanishing cycles of the Lefschetz pencil $f_s$.}
\end{figure}
\addtocounter{figure}{-1}
\begin{figure}[htbp]
\centering
\subfigure{}
\setcounter{subfigure}{4}
\subfigure[The vanishing cycle $c_{5}$.]{\includegraphics[width=90mm]{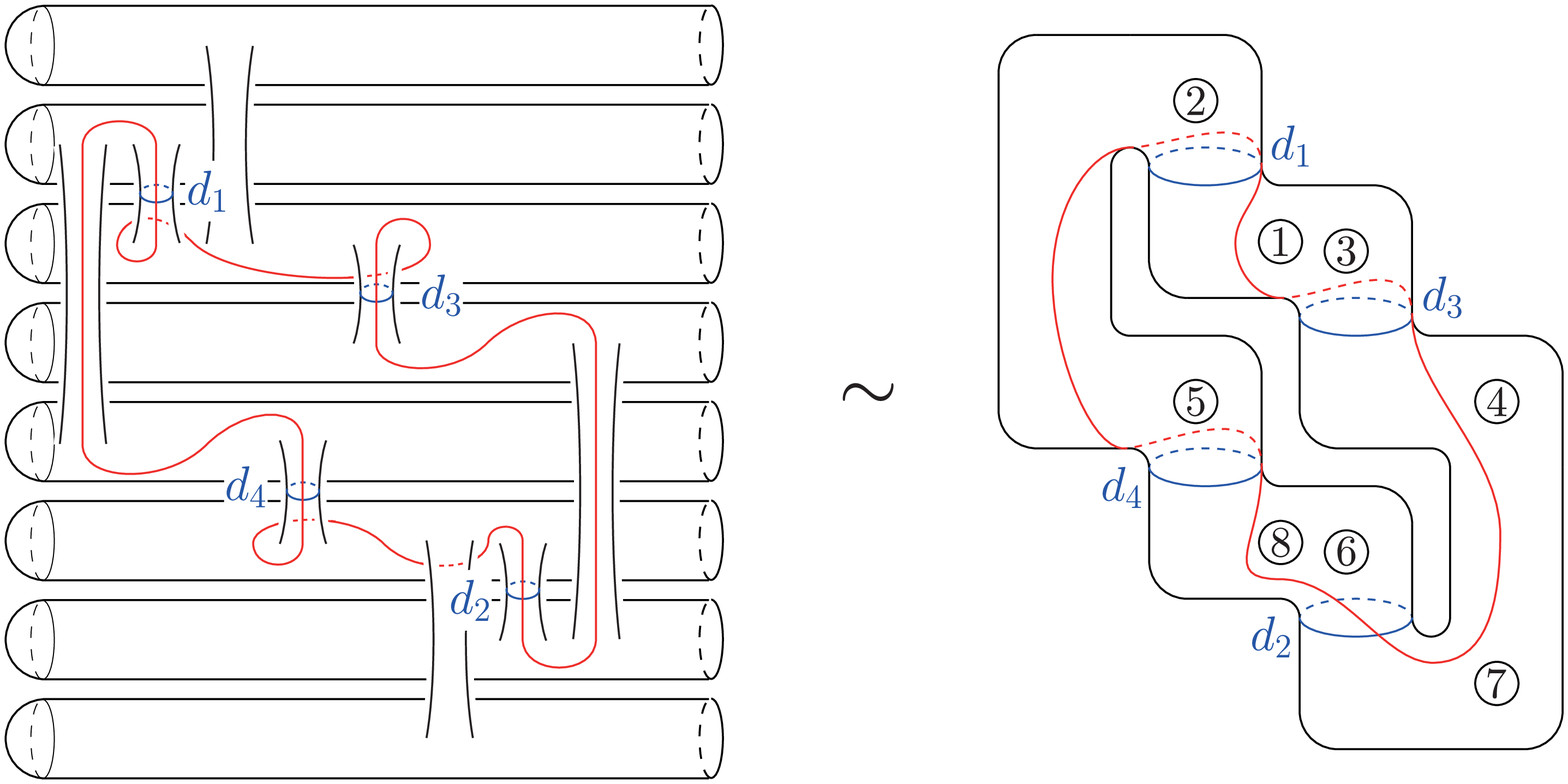}\label{F:vanishing cycle Us5}}
\subfigure[The vanishing cycle $c_{9}$.]{\includegraphics[width=90mm]{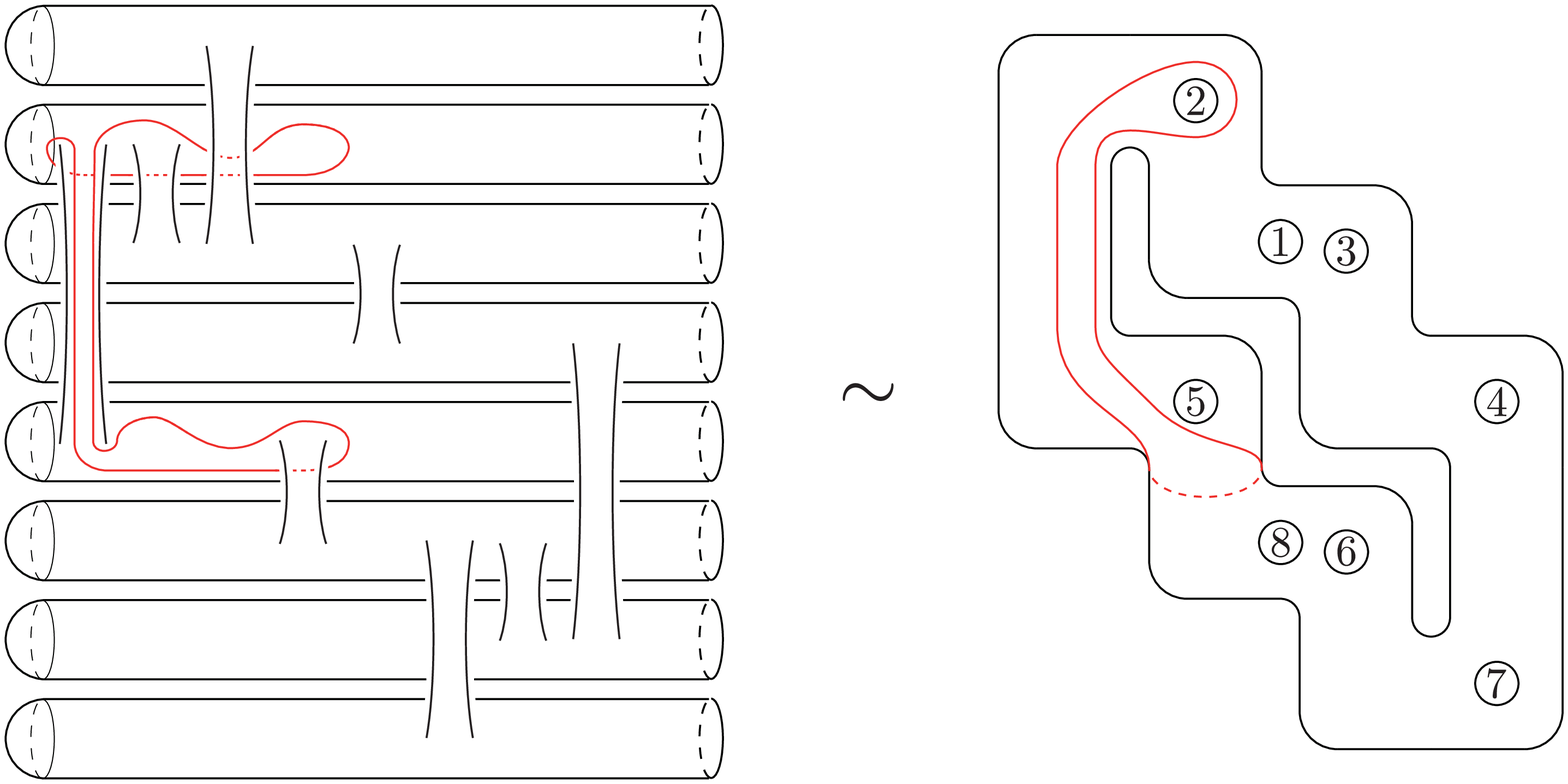}}
\subfigure[The vanishing cycle $c_{10}$.]{\includegraphics[width=90mm]{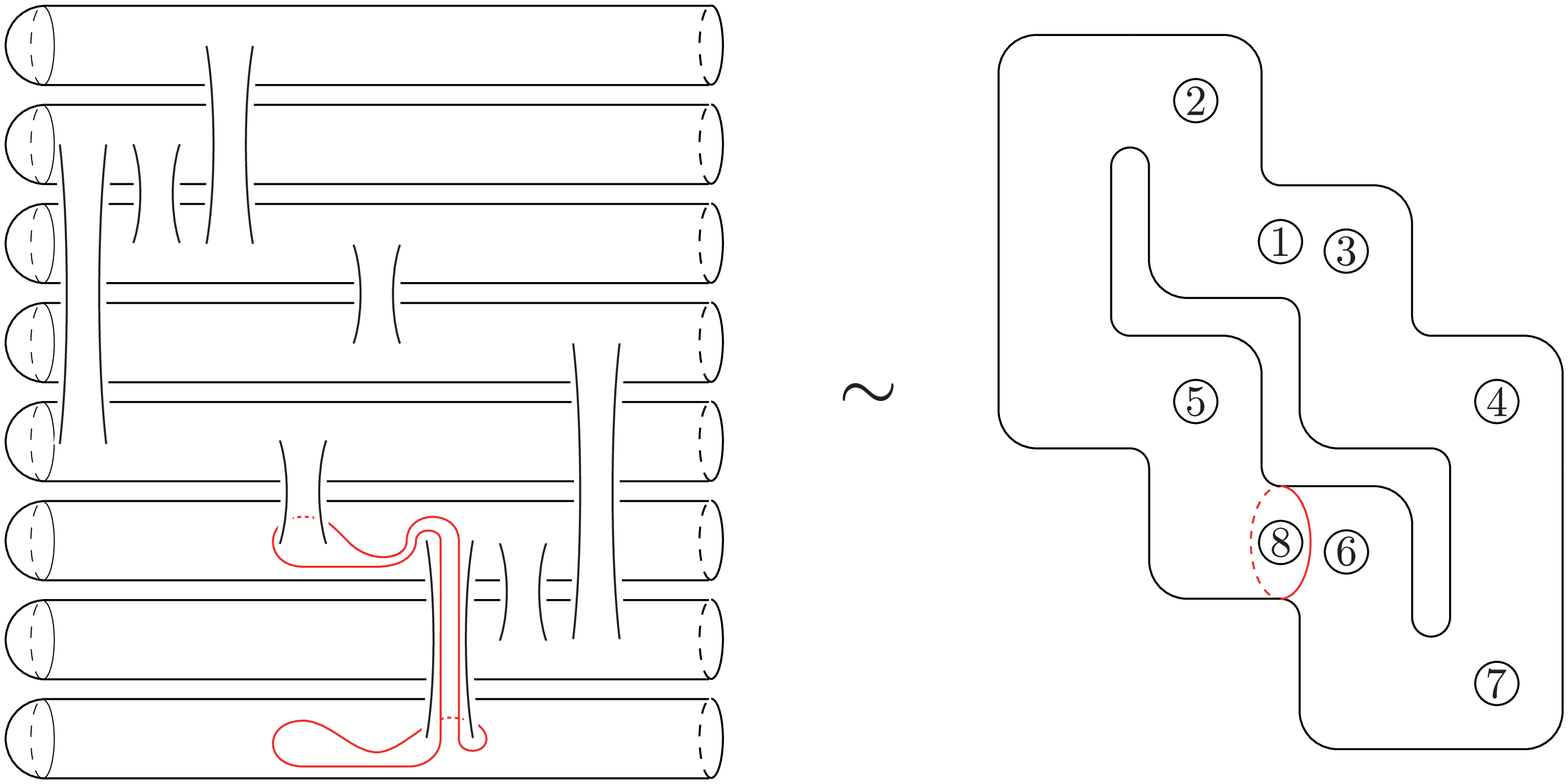}}
\caption{Vanishing cycles of the Lefschetz pencil $f_s$.}
\end{figure}

\end{document}